\documentclass[12pt,reqno]{amsart}

\usepackage[a4paper,left=29mm,right=29mm,top=29mm,bottom=24mm]{geometry}
\usepackage{amsmath,amssymb,amsthm,mathtools}
\numberwithin{equation}{section}
\usepackage{bm}
\usepackage{enumitem}
\usepackage{aliascnt}
\usepackage[hidelinks]{hyperref}
\usepackage[nameinlink,capitalize,noabbrev]{cleveref}
\hypersetup{
 pdftitle={Spectral compactness and critical Lorentz defects for Maxwell--Ohm evolution with prescribed velocity},
 pdfauthor={Cheng Yu},
 pdfsubject={Spectral weak-to-strong compactness, sharp critical instability, and Lorentz-force defects for Maxwell--Ohm evolution with prescribed velocity},
 pdfkeywords={Maxwell--Ohm evolution, weak-to-strong stability, spectral compactness, critical Sobolev endpoint, Fourier capacity, Lorentz-force defect}
}

\newtheorem{theorem}{Theorem}[section]
\newaliascnt{proposition}{theorem}
\newtheorem{proposition}[proposition]{Proposition}
\aliascntresetthe{proposition}
\newaliascnt{lemma}{theorem}
\newtheorem{lemma}[lemma]{Lemma}
\aliascntresetthe{lemma}
\newaliascnt{corollary}{theorem}
\newtheorem{corollary}[corollary]{Corollary}
\aliascntresetthe{corollary}
\theoremstyle{definition}
\newaliascnt{definition}{theorem}
\newtheorem{definition}[definition]{Definition}
\aliascntresetthe{definition}
\newaliascnt{remark}{theorem}
\newtheorem{remark}[remark]{Remark}
\aliascntresetthe{remark}

\newcommand{\T}{\mathbb T}
\newcommand{\R}{\mathbb R}
\newcommand{\Hh}{\mathcal H}
\newcommand{\eps}{\varepsilon}
\newcommand{\weak}{\rightharpoonup}
\newcommand{\weakstar}{\stackrel{*}{\rightharpoonup}}
\newcommand{\dd}{\,\mathrm d}
\newcommand{\cB}{\mathcal B}

\newcommand{\cT}{\mathcal T}
\newcommand{\norm}[1]{\lVert #1\rVert}

\DeclareMathOperator{\curl}{curl}
\DeclareMathOperator{\diver}{div}
\DeclareMathOperator{\supp}{supp}

\title[Spectral compactness and critical Lorentz defects]{Spectral Compactness and Critical Lorentz Defects for Maxwell--Ohm Evolution with Prescribed Velocity}
\author{Cheng Yu}
\address{Department of Mathematics, University of Florida, Gainesville, FL 32611, USA}
\email{chengyu@ufl.edu}
\date{}

\begin{document}

\begin{abstract}
We establish a spectral weak-to-strong stability criterion and a sharp
Sobolev dichotomy for Maxwell--Ohm evolution with prescribed velocity on
$\T^d$, $d=2,3$.
 Uniform spatial spectral tightness, together with weak convergence
of each fixed spatial Fourier mode and strong convergence of the
electromagnetic initial states, yields strong convergence of the
electromagnetic fields, weak convergence of the currents, and distributional
convergence of the Lorentz forces. In particular, weak convergence of the
velocity coefficients in $L^2_tH^s_x$ suffices when $s>d/2$.

At the critical index $s=d/2$, this conclusion fails sharply. We construct
smooth divergence-free velocities converging strongly to zero in the critical
Sobolev space, together with electromagnetic initial states converging
strongly to zero, while the corresponding terminal fields remain of order
one and the currents stay bounded. In two dimensions the construction is
based on a moving logarithmic core, whereas in three dimensions it uses a
Fourier-capacity core built from Leray-projected lattice modes and a polarized
six-component Maxwell packet with exact magnetic divergence constraint.
In both dimensions, the Lorentz forces converge to an explicit nonzero
measure supported on a single ray with fixed unit direction.
\end{abstract}

\subjclass[2020]{Primary 35Q61; Secondary 35B35, 35D30, 35L45}
\keywords{Maxwell--Ohm evolution, weak-to-strong stability, spectral compactness, critical Sobolev endpoint, Fourier capacity, polarized Maxwell packets, Lorentz-force defect, norm inflation}
\maketitle

\section{Introduction and main results}\label{sec:introduction}

The Maxwell--Ohm evolution with a prescribed velocity coefficient couples a constant-principal-part hyperbolic system to a rough time-dependent multiplication operator. Its natural energy controls the electric and magnetic fields in $L^\infty_tL^2_x$ and the current in $L^2_{t,x}$, but gives no spatial derivative of either electromagnetic field. Consequently, weak convergence of the coefficient and weak energy convergence of the fields do not directly identify Ohm's law or the Lorentz force.

This weak-product obstruction is also relevant to finite-energy weak solutions for coupled Navier--Stokes--Maxwell systems; see \cite{Masmoudi2010,ArsenioSaintRaymond2019,ArsenioGallagher2020}. Small-data global regular theories include \cite{IbrahimKeraani2011,GermainIbrahimMasmoudi2014}. Recent developments in this framework include refined velocity
estimates and optimal large-time decay under additional
low-frequency assumptions
\cite{HouamedIbrahimSaidHouari2026}.
Large-data results under additional geometric or structural assumptions include the axisymmetric setting of \cite{ArsenioHassainiaHouamed2024} and the planar inhomogeneous setting of \cite{ArsenioHouamedSaidHouari2025}. In a different parameter regime, Ars\'{e}nio--Ibrahim--Masmoudi \cite{ArsenioIbrahimMasmoudi2015} derive viscous magnetohydrodynamics as the speed of light $c\to\infty$ under natural finite-energy bounds, using direct frequency analysis to establish weak stability of the Lorentz force. The present paper studies continuity of the Maxwell--Ohm coefficient-to-solution map when the prescribed velocity varies in a weak topology. This isolates the closure mechanism in Ohm's law and the Lorentz force without importing parabolic compactness from a momentum equation.

For this prescribed-velocity problem, we establish a sharp
dichotomy in 2D and 3D. An intrinsic spectral tightness criterion gives strong electromagnetic compactness, weak current convergence, and Lorentz-force closure. The criterion holds under weak $L^2_tH^s_x$ coefficient convergence when $s>d/2$. At $s=d/2$, distinct endpoint constructions give strongly vanishing coefficients and initial fields, yet an order-one terminal response and an explicit nonzero Lorentz-force ray defect:
\[
 \begin{array}{ccl}
 s>\dfrac d2
 &:& \text{weak-to-strong Maxwell stability},\\[2mm]
 s=\dfrac d2
 &:& \text{sharp instability with nonzero Lorentz defects}.
 \end{array}
\]

The Maxwell--Ohm system in both dimensions takes the unified form
\begin{equation}\label{eq:intro-3d-system}
 \eps\partial_tE-\curl B_d=-j,
 \qquad
 \partial_tB_d+\curl E=0,
 \qquad
 j=\sigma\bigl(E+G_d(v,B_d)\bigr).
\end{equation}
We prescribe the initial data
\[
 E(0,x)=E_0(x),
 \qquad
 B_d(0,x)=B_{d,0}(x),
 \qquad x\in\T^d,
\]
where $E_0$ and $B_{d,0}$ are square-integrable initial fields,
with $B_{2,0}=b_0$ and $B_{3,0}=B_0$.

In 3D, $\curl=\nabla\times$ is the usual curl
operator. In 2D, we use
\[
 R(z_1,z_2)=(z_2,-z_1),
 \qquad
 \curl b=R\nabla b,
 \qquad
 \curl E=\partial_1E_2-\partial_2E_1.
\]
The velocity--magnetic coupling is
\begin{equation}\label{eq:G-def}
 G_d(v,B_d)=
 \begin{cases}
  bRv, & d=2,\\
  v\times B, & d=3.
 \end{cases}
\end{equation}

Here we  denote by $E$, $j$, and $v$ the electric field, electric
current density, and prescribed fluid velocity, respectively;
these fields take values in $\R^d$.
The magnetic variable is denoted by $B_d$, with
\(
 B_2=b\in\R,
 \;\;
 B_3=B\in\R^3.
\)
Here $B$ is the magnetic induction in 3D,
whereas $b$ is its signed component perpendicular to the
plane in the 2D polarization.
The constants $\eps>0$ and $\sigma>0$ denote the electric
permittivity and electrical conductivity, respectively,
with magnetic permeability normalized to one.
Throughout, we consider the problem in $\T^d=(\R/2\pi\mathbb Z)^d$ for $d=2, 3$.

Thus the 2D model is the planar polarization
of the 3D system with fields independent of
$x_3$, electric field and velocity in the plane, and magnetic
field perpendicular to it.
The corresponding Lorentz-force densities are $bRj$ for
$d=2$ and $j\times B$ for $d=3$.

Integration is with respect to Lebesgue measure, the Fourier
characters are $e^{ik\cdot x}$, $k\in\mathbb Z^d$, and Dirac
masses have unit mass. The map $R$ is the clockwise quarter-turn;
in 3D the cross product is oriented by
$e_1\times e_2=e_3$. These conventions make the coefficient,
direction, and sign of the defect measure below unambiguous.

Let $P_M$ denote the spatial Fourier projection onto $|k|\le M$. Using the standard Fourier and Sobolev conventions of \cite{BahouriCheminDanchin2011}, for $s>d/2$ Cauchy--Schwarz in Fourier space gives
\begin{equation}\label{eq:intro-sobolev-tail}
 \norm{(I-P_M)f}_{L^\infty(\T^d)}
 \le C M^{d/2-s}\norm{f}_{H^s(\T^d)},
\end{equation}
so weak convergence in $L^2_tH^s_x$ implies spectral tightness. The exponent vanishes at $s=d/2$. Our negative results show that this loss is genuine in both dimensions.

Let $B_2=b$, $B_3=B$, and write $X=(E,B_d)$. The Maxwell energy space is
\begin{equation}\label{eq:intro-energy-space}
 \norm{X}_{\Hh_d}^2
 :=\eps\norm{E}_{L^2(\T^d)}^2+\norm{B_d}_{L^2(\T^d)}^2.
\end{equation}

The endpoint constructions are written with $\eps=1$. This entails no loss of generality: with $\tau=t/\sqrt\eps$, $\widetilde E=\sqrt\eps\,E$, $\widetilde\sigma=\sigma/\sqrt\eps$, and $\widetilde v=\sqrt\eps\,v$, the system on $[0,T]$ is transformed into the same system with dielectric parameter one on $[0,T/\sqrt\eps]$. We then relabel the rescaled time interval and parameters.

The following two theorems make this dichotomy precise.

\begin{theorem}[Spectral weak-to-strong Maxwell stability]\label{thm:main-stability}
Suppose $v_n$ is spectrally tight with candidate limit $v$, in the sense that
\begin{equation}\label{eq:intro-st-bound}
 \sup_n\norm{v_n}_{L^2(0,T;L^\infty(\T^d))}
 +\norm{v}_{L^2(0,T;L^\infty(\T^d))}<\infty,
\end{equation}
\begin{equation}\label{eq:intro-st-tail}
 \lim_{M\to\infty}\left[
 \sup_n\norm{(I-P_M)v_n}_{L^2_tL^\infty_x}
 +\norm{(I-P_M)v}_{L^2_tL^\infty_x}\right]=0,
\end{equation}
and, for every fixed spatial Fourier mode $k\in\mathbb Z^d$,
\begin{equation}\label{eq:intro-st-modes}
 \widehat v_n(k)\weak\widehat v(k)
 \quad\text{weakly in }L^2(0,T).
\end{equation}
Let $X_n$ and $X$ be the corresponding Maxwell--Ohm energy solutions, and assume
\[
 X_n(0)\longrightarrow X(0)
 \quad\text{strongly in }\Hh_d.
\]
Then
\begin{equation}\label{eq:intro-main-strong}
 X_n\longrightarrow X
 \quad\text{strongly in }C([0,T];\Hh_d).
\end{equation}
Moreover,
\begin{equation}\label{eq:intro-main-current}
 j_n\weak j
 \quad\text{weakly in }L^2((0,T)\times\T^d),
\end{equation}
and the Lorentz forces converge in distributions:
\begin{equation}\label{eq:intro-main-force}
 b_nRj_n\longrightarrow bRj\quad(d=2),
 \qquad
 j_n\times B_n\longrightarrow j\times B\quad(d=3).
\end{equation}
In particular, if
\begin{equation}\label{eq:intro-sobolev-hyp}
 v_n\weak v\quad\text{in }L^2(0,T;H^s(\T^d)),
 \qquad s>\frac d2,
\end{equation}
then all the preceding conclusions hold.
\end{theorem}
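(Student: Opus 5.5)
The plan is to work with the difference $Y_n=X_n-X=(E_n-E,\,B_{d,n}-B_d)$ and close a Gronwall-type energy estimate whose only non-dissipative source is a weak-times-strong product that can be shown to vanish. Subtracting the two systems gives
\[
 \eps\partial_t(E_n-E)-\curl(B_{d,n}-B_d)=-(j_n-j),\qquad \partial_t(B_{d,n}-B_d)+\curl(E_n-E)=0,
\]
with
\[
 j_n-j=\sigma\bigl((E_n-E)+G_d(v_n,B_{d,n}-B_d)+G_d(v_n-v,B_d)\bigr).
\]
Testing with $Y_n$ (the curl terms cancel by antisymmetry) yields
\[
 \tfrac12\tfrac{d}{dt}\norm{Y_n}_{\Hh_d}^2+\sigma^{-1}\norm{j_n-j}_{L^2}^2=\int (j_n-j)\cdot\bigl(G_d(v_n,B_{d,n}-B_d)+G_d(v_n-v,B_d)\bigr)\dd x .
\]
We may rewrite $E_n-E=\sigma^{-1}(j_n-j)-G_d(\cdots)$, and then absorb with Young's inequality. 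The term with $G_d(v_n,B_{d,n}-B_d)$ is bounded by $\tfrac{1}{4\sigma}\norm{j_n-j}^2+C\sigma\norm{v_n}_{L^\infty}^2\norm{Y_n}^2$, and $\norm{v_n}_{L^\infty}^2\in L^1_t$ uniformly by \eqref{eq:intro-st-bound}. The problem is the forcing term $F_n=G_d(v_n-v,B_d)$, where $B_d$ is fixed but $v_n-v$ only converges weakly. A naive Young bound gives $\norm{v_n-v}_{L^\infty}^2\norm{B_d}^2$, which does not vanish, so the argument must be refined.

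The main step is to split $v_n-v=P_M(v_n-v)+(I-P_M)(v_n-v)$. The high part is controlled by \eqref{eq:intro-st-tail}: its contribution to the integrated energy is at most $C\sup_n\norm{(I-P_M)(v_n-v)}_{L^2_tL^\infty_x}^2\sup_t\norm{B_d}_{L^2}^2$, which is small uniformly in $n$ for large $M$. The low part $w_{n,M}=P_M(v_n-v)$ is a finite sum of modes $\widehat{(v_n-v)}(k)(t)e^{ik\cdot x}$, each converging weakly in $L^2(0,T)$ to zero by \eqref{eq:intro-st-modes}. Its contribution cannot be absorbed by Young's inequality; instead it is kept as the linear term $\int_0^t\!\int (j_n-j)\cdot G_d(w_{n,M},B_d)$, which is the hard part.

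To handle that term, I would write $j_n-j$ through the equations as $\sigma(E_n-E)+\sigma G_d(v_n,\cdot)+\sigma F_n$; the last two pieces are again absorbed or small, so it remains to show
\[
 \int_0^t\!\!\int (E_n-E)\cdot G_d(w_{n,M},B_d)\dd x\dd s\to0,\qquad\text{uniformly in }t .
\]
For each fixed $k$ this is $\int_0^t \widehat{(v_n-v)}(k)(s)\,a_{n,k}(s)\dd s$ with $a_{n,k}(s)=\int (E_n-E)\cdot G_d(e^{ik\cdot x}e_\ell,B_d)\dd x$. I would aim to show that $a_{n,k}$ is precompact in $L^2(0,T)$, which makes this a weak-times-strong pairing that tends to zero. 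Precompactness should follow from Aubin--Lions in time: first approximate $B_d$ in $C_tL^2_x$ by a spatially smooth $\widetilde B$, so that $a_{n,k}$ is a pairing of $E_n-E$ with a smooth test field $\phi(t,x)$. The function $a_{n,k}$ is bounded, since $E_n$ is bounded in $L^\infty_tL^2_x$ by the uniform energy estimate, and its time derivative is bounded in $L^1_t$ through $\eps\partial_tE_n=\curl B_{d,n}-j_n$. Here $j_n$ is bounded in $L^2$ because $\norm{v_n}_{L^\infty}\in L^2_t$; note that $\phi$ also depends on time through $\widetilde B$, which should be mollified in time as well. Arzel\`a--Ascoli then gives uniform-in-time compactness.

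Combining these pieces, Gronwall gives $\limsup_n\sup_t\norm{Y_n}^2\lesssim\delta(M)+o_n(1)$, with the initial term vanishing by hypothesis; letting $M\to\infty$ yields \eqref{eq:intro-main-strong}, and the dissipative term gives $j_n\to j$ even strongly in $L^2$ (an a fortiori form of \eqref{eq:intro-main-current}). The Lorentz forces are strong in $L^2_{t,x}$ times strong in $C_tL^2_x$, so they converge in $L^1$ and hence in distributions, which is \eqref{eq:intro-main-force}. Finally, under \eqref{eq:intro-sobolev-hyp}, the sequence is bounded in $L^2_tH^s_x$, so \eqref{eq:intro-sobolev-tail} gives \eqref{eq:intro-st-bound} and \eqref{eq:intro-st-tail} with rate $M^{d/2-s}$, and pairing with $e^{-ik\cdot x}\psi(t)$ gives \eqref{eq:intro-st-modes}. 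I expect the main obstacle to be the uniform-in-$t$ vanishing of the low-mode cross term, specifically justifying the time-compactness of $a_{n,k}$ when the test field involves the merely $C_tL^2_x$ field $B_d$.
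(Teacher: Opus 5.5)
The step that fails is your treatment of the low-mode term once the dissipation is written as $\sigma^{-1}\norm{j_n-j}_{L^2}^2$. Write $h_n=B_{d,n}-B_d$, $F_n=G_d(v_n-v,B_d)$ and $w_{n,M}=P_M(v_n-v)$. After you substitute $j_n-j=\sigma(E_n-E)+\sigma G_d(v_n,h_n)+\sigma F_n$, the two pieces you call ``absorbed or small'' are neither.

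First, since $G_d$ is linear in its first slot,
\[
 \int F_n\cdot G_d(w_{n,M},B_d)\,\dd x
 =\norm{G_d(w_{n,M},B_d)}_{L^2}^2
 +\int G_d\bigl((I-P_M)(v_n-v),B_d\bigr)\cdot G_d(w_{n,M},B_d)\,\dd x .
\]
The first term on the right is a nonnegative weak-times-weak quantity, and its time integral need not tend to zero.

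Second, $\sigma\int G_d(v_n,h_n)\cdot G_d(w_{n,M},B_d)\,\dd x$ is linear in $h_n$. Its coefficient $\sigma\norm{v_n}_\infty\norm{w_{n,M}}_\infty\norm{B_d}_{L^2}$ is bounded but not small in $L^1(0,T)$, so Gronwall returns only $\norm{Y_n}=O(1)$.

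This is not just bookkeeping: what you conclude from it, that $j_n\to j$ strongly in $L^2_{t,x}$ and that the Lorentz forces converge in $L^1$, is false. Take $v=0$ and $v_n=\sin(nt)c$ with a constant $c\in\R^d\setminus\{0\}$. This sequence is spectrally tight, since only the zero mode occurs, so $X_n\to X$. Hence $j_n-j-\sigma\sin(nt)G_d(c,B_d)\to0$ in $L^2_{t,x}$. But $\norm{\sin(nt)G_d(c,B_d)}_{L^2_{t,x}}^2\to\frac12\norm{G_d(c,B_d)}_{L^2_{t,x}}^2$, which in 2D equals $\frac12|c|^2\norm{b}_{L^2_{t,x}}^2>0$ as soon as $b\not\equiv0$. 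So with $\norm{j_n-j}^2$ as the dissipated quantity your inequality cannot close: its right-hand side contains a genuine order-one source.

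The fix is to start from $\frac12\frac{\dd}{\dd t}\norm{Y_n}_{\Hh_d}^2=-\int(E_n-E)\cdot(j_n-j)\,\dd x$ and insert Ohm's law only there. This gives dissipation $\sigma\norm{E_n-E}_2^2$ and source $-\sigma\int(E_n-E)\cdot\bigl(G_d(v_n,h_n)+F_n\bigr)\,\dd x$. Young's inequality against $\frac\sigma4\norm{E_n-E}_2^2$ handles $G_d(v_n,h_n)$ and the high modes of $F_n$. The only remaining term is then exactly $-\sigma\int_0^t\!\int(E_n-E)\cdot G_d(w_{n,M},B_d)\,\dd x\,\dd s$.

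Your compactness argument for $a_{n,k}$ then works as you describe. The smoothing error is at most $C\norm{E_n-E}_{C_tL^2_x}\norm{w_{n,M}}_{L^1_tL^\infty_x}\norm{B_d-\widetilde B}_{C_tL^2_x}$, uniform in $n$ for fixed $M$. With this correction the currents converge only weakly, by the same modewise pairing applied to $F_n$ against $L^2_{t,x}$ test fields, and the Lorentz forces converge as strong times weak.

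The corrected route differs genuinely from the paper's. The paper truncates the coefficient to $P_Mv_n$ and gets uniform state-frequency tails from the Dyson series. It then extracts limits by Arzel\`a--Ascoli, identifies them through the variation-of-constants formula and uniqueness, and removes the truncation with \eqref{eq:coefficient-stability}. Your version needs compactness only of finitely many scalar time functions and never has to identify a limit trajectory.
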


\begin{theorem}[Sharp instability and Lorentz defects at $s=d/2$]\label{thm:main-endpoint}
The Sobolev condition $s>d/2$ in the final assertion of \cref{thm:main-stability} is sharp within the isotropic Sobolev scale. At the critical index $s=d/2$, the weak-to-strong conclusion fails in both dimensions. More precisely, use the normalization $\eps=1$ described above, fix $T>0$, let $w=e_1$, and set $x_c(t)=x_0+tw$.
\begin{enumerate}[label=\textup{(\roman*)},leftmargin=2.4em]
\item In 2D, where $s=d/2=1$, there are smooth divergence-free coefficients $u_n$ and real energy solutions $X_n=(E_n,b_n)$ such that
\[
 u_n\longrightarrow0\quad\text{strongly in }L^2(0,T;H^1(\T^2)),
 \qquad
 X_n(0)\longrightarrow0\quad\text{in }\Hh_2,
\]
while
\[
 \liminf_{n\to\infty}\norm{X_n(T)}_{\Hh_2}>0.
\]
The currents are bounded in $L^2((0,T)\times\T^2)$ and
\begin{equation}\label{eq:intro-2d-defect}
 b_nRj_n\weakstar
 -\frac{t}{2T^2}w\,\dd t\,\delta_{x=x_c(t)}
\end{equation}
as vector-valued Radon measures on $[0,T]\times\T^2$.
\item In 3D, where $s=d/2=3/2$, there are smooth divergence-free coefficients $v_n$, genuinely depending on all three spatial variables, and real energy solutions $X_n=(E_n,B_n)$ such that
\[
 v_n\longrightarrow0\quad\text{strongly in }L^2(0,T;H^{3/2}(\T^3)),
 \qquad
 X_n(0)\longrightarrow0\quad\text{in }\Hh_3,
\]
while
\[
 \liminf_{n\to\infty}\norm{X_n(T)}_{\Hh_3}>0.
\]
Moreover $\diver B_n=0$, the currents are bounded in $L^2((0,T)\times\T^3)$, and
\begin{equation}\label{eq:intro-3d-defect}
 j_n\times B_n\weakstar
 -\frac{t}{2T^2}w\,\dd t\,\delta_{x=x_c(t)}
\end{equation}
as vector-valued Radon measures on $[0,T]\times\T^3$.
\end{enumerate}
\end{theorem}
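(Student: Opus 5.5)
The plan is to argue by \emph{reverse engineering}: first prescribe the electromagnetic fields and the current, then recover the velocity from Ohm's law. Since $\eps=1$, the Maxwell part of \eqref{eq:intro-3d-system} is linear. So I will pick a family of smooth real fields $X_n=(E_n,B_{d,n})$ and currents $j_n$ that satisfy $\partial_tE_n-\curl B_{d,n}=-j_n$ and $\partial_tB_{d,n}+\curl E_n=0$ exactly. Ohm's law then reads
\[
 G_d(u_n,B_{d,n})=\sigma^{-1}j_n-E_n .
\]
In 2D this gives $Ru_n=b_n^{-1}(\sigma^{-1}j_n-E_n)$ wherever $b_n\neq0$. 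In 3D it requires $\sigma^{-1}j_n-E_n\perp B_n$, after which $u_n$ is fixed up to a component parallel to $B_n$. The fields will consist of two pieces:
\begin{itemize}[leftmargin=2em]
\item a \emph{spread-out} Maxwell packet carrying the order-one energy, vanishing at $t=0$ and reaching $\norm{X_n(T)}^2\approx$ const at $t=T$;
\item a \emph{core} of radius $r_n\to0$ centred at $x_c(t)=x_0+te_1$, where $B_{d,n}\sim r_n^{-1}$ and $j_n\sim r_n^{-1}$ in amplitude (in $d=2$; in $d=3$ with the exponent $r_n^{-3/2}$).
\end{itemize}
Both $b_n$ and $j_n$ then stay bounded in $L^2$, while their product concentrates to a Dirac mass on the ray $x=x_c(t)$.

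The energy budget fixes the defect. From $\tfrac{\dd}{\dd t}\tfrac12\norm{X_n}^2=-\int j_n\cdot E_n$, a terminal energy of size $t^2/(2T^2)$ forces the work rate $t/T^2$. The core is arranged so that the work is done by the coupling term $G_d(u_n,B_{d,n})$ against the moving field, i.e.\ by the Lorentz force along $w$. Concretely, I will arrange $b_nRj_n\approx-\tfrac{t}{2T^2}w\,\rho_n(x-x_c(t))$ with $\rho_n$ an $L^1$-normalized bump. This produces \eqref{eq:intro-2d-defect}, and \eqref{eq:intro-3d-defect} in 3D. I will verify the weak-$*$ limit by pairing with a test function $\varphi(t,x)$ and using that $\rho_n\to\delta$ uniformly in $t$.

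The critical-regularity requirement is handled by the logarithmic failure of $H^{d/2}\hookrightarrow L^\infty$. In 2D I will take $u_n=\nabla^\perp\psi_n$, which is automatically divergence-free. The stream function $\psi_n$ is built from a truncated profile $\chi\,\log\log(1/|x-x_c(t)|)$ between scales $r_n$ and $1$, renormalized. This keeps $\norm{u_n}_{L^2_tH^1}\to0$ while $u_n$ takes the large, precisely tuned values needed on the core. The divergence-free condition is then built in, but the identity $Ru_n=b_n^{-1}(\sigma^{-1}j_n-E_n)$ becomes a constraint on $(E_n,b_n,j_n)$. I will resolve it by choosing the core fields radially structured, so that the required vector field is itself a rotated gradient up to errors. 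Any residual error is absorbed by correcting $j_n$ and re-solving Maxwell with a forcing that is small in $L^1_tL^2_x$. By the energy estimate and Duhamel, this correction changes $X_n$ by $o(1)$ in $C([0,T];\Hh_2)$ and the Lorentz force by $o(1)$ in measures. The same estimate shows $X_n(0)\to0$, since the core has vanishing energy and the packet starts at zero.

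In 3D, scalar logarithms are no longer available in $H^{3/2}$, so I will replace them by a ``Fourier-capacity'' core. I take $v_n=\mathbb P\sum_{k}a_k(t)\,\mathbf c_k e^{ik\cdot(x-x_c(t))}$, a Leray-projected sum over lattice modes in a dyadic range $N\le|k|\le N^{K_n}$. The weights are $a_k\propto|k|^{-3}/\sqrt{\log}$, chosen so that $\sum|k|^3|a_k|^2\to0$ while the value at the core is $\sum|a_k|\to\infty$ at the desired rate. The magnetic packet is a polarized six-component field $B_n=\curl A_n$, so that $\diver B_n=0$ holds exactly. Its polarization is chosen so that $v_n\times B_n$ has a prescribed direction at the core, and so that the orthogonality condition $\sigma^{-1}j_n-E_n\perp B_n$ holds identically. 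The step I expect to be the main obstacle is exactly here: making the three exact constraints compatible with the mode sums up to errors that are negligible in $L^1_tL^2_x$. These constraints are $\diver v_n=0$, $\diver B_n=0$, and the Ohm relation solvable with a $v_n$ of vanishing $H^{3/2}$ norm, and the coefficients must genuinely depend on all three variables. My first attempt would be to choose $\mathbf c_k$ orthogonal to both $k$ and the polarization of $B_n$. Then $\mathbb P$ acts trivially on the main term, and the cross product has a non-degenerate component along $w$. The remaining limits, namely boundedness of $j_n$, the defect computation, and $\liminf\norm{X_n(T)}>0$, then follow as in 2D.
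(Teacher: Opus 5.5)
Your capacity cores (a logarithmic profile in 2D, $|k|^{-3}$-weighted Leray-projected lattice modes in 3D) and the final correction by a forcing small in $L^1_tL^2_x$ are the right ingredients. What is missing is the mechanism that carries the energy along the ray, and the reverse-engineering framework cannot supply it.

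\textbf{Bounded currents require a carrier frequency.} Ampère's law gives $j_n=R\nabla b_n-\partial_tE_n$ (in 3D, $j_n=\curl B_n-\partial_tE_n$). So the concentrated fields must nearly solve free Maxwell in $L^2_{t,x}$ while staying on the ray for a time $T\gg r_n$. A non-oscillating core of width $r_n$ and unit $L^2$ mass cannot do this. Already for the co-moving polarized profile $b=f(x-x_c(t))$, $E=-Rw\,b$, Faraday holds exactly but $j=(\partial_2f,0)$, which is of size $r_n^{-1}$ in $L^2$. The transverse derivative is never cancelled; this is diffraction. The paper's leading field is instead $q_n(t)r_+(w)a_\eta(x-x_c(t))\cos(\Lambda\phi)$. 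It is realized exactly for the constant coefficient $A_n(t)w$, mode by mode, with each initial mode in the exact outgoing eigenspace and branch decoupling across the gap $|\xi|$. The lattice frequency is sent to infinity only after $\eta$ is fixed, and the factor in the defect comes from averaging $\cos^2$ under $\Lambda_n\eta_n\to\infty$.

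\textbf{The amplification is a local transport law, not an energy budget.} The $O(\Lambda)$ WKB term gives $q'=\frac\sigma2(u\cdot w-1)q$ along the ray. This dictates the core value $A_n(t)=1+2/(\sigma(t+\delta_n))$ and gives $q_n=(t+\delta_n)/(T+\delta_n)$. Your two-piece picture also misplaces the energy. A spread-out packet cannot acquire it, since $\sigma\int_0^T\!\int G_d(u_n,B_n)\cdot E_n\,\dd x\,\dd t$ is at most $\sigma\norm{u_n}_{L^2_{t,x}}\norm{B_n}_{L^4_{t,x}}\norm{E_n}_{L^4_{t,x}}=o(1)$ for such fields. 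Meanwhile the force on the core is bounded by $\norm{b_n}_{L^2_{t,x}(\mathrm{core})}\norm{j_n}_{L^2_{t,x}}$, so the core itself must carry the order-one energy.

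\textbf{The velocity cannot be read off from Ohm's law.} The relation $Ru_n=b_n^{-1}(\sigma^{-1}j_n-E_n)$ is forced pointwise wherever $b_n\ne0$. It is singular on the nodal set of an oscillating $b_n$, and nothing makes it divergence-free or small in $H^1$. The workable direction is the opposite: prescribe the coefficient, solve the linear problem exactly, and compare with the constant-coefficient packet.

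\textbf{The comparison needs an ordered diagonal choice.} The comparison loses $G_n=\exp(C\int_0^T(1+\norm{u_n(t)}_\infty)\,\dd t)$. This factor must diverge for any such example, because $\norm{X_n(T)}_{\Hh_d}\le\norm{X_n(0)}_{\Hh_d}\exp(\sigma\int_0^T\norm{u_n}_\infty\,\dd t)$. Residuals that are merely small ``up to errors'' are therefore useless. The paper beats $G_n$ as follows:
\begin{enumerate}
\item fix $n$, which fixes the core and the radius $\rho_n$ on which $u_n=A_nw$ exactly;
\item choose $\eta_n<\rho_n/4$;
\item choose the carrier $\Lambda_n$ so that the packet errors fall below $1/(nG_nU_nH_n)$, where $U_n,H_n$ absorb multiplication by $u_n$ and $A_n$.
\end{enumerate}
The last step works only because, for fixed $n$, these errors vanish as $\Lambda\to\infty$.

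\textbf{3D.} You name the obstacle but do not resolve it. The resolution has four parts:
\begin{itemize}
\item the cubic-symmetry identity $\sum_{k\ne0}\vartheta(e^{-L}|k|)|k|^{-3}\frac{k\otimes k}{|k|^2}=\frac{S_L}{3}I$, which gives $V_L(0)=w$ exactly for the Leray-projected core with weights $|k|^{-3}/S_L$;
\item the identity $P_+(\omega)C_A(t)P_+(\omega)=\frac\sigma2(A(t)w\cdot\omega-1)P_+(\omega)$ on the rank-two outgoing space, so that $r_0=2^{-1/2}(p,r)$ is transported by a scalar amplitude with no rotation inside the eigenspace;
\item exact $\diver B=0$, obtained by placing each initial mode in $\mathcal E_+(\omega_{\Lambda,\ell})$ and using Faraday;
\item since the 3D core is not flat, choosing $\eta_n$ (by continuity and $v_n(t,x_c(t))=A_n(t)w$) so that $|v_n-A_nw|\le1/(nG_nU_n)$ on the packet support, before choosing the carrier.
\end{itemize}

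\textbf{2D stream function.} The logarithmic profile must multiply the linear stream function, $\psi=-A(Rw\cdot x)\chi(|x|)$. A stream function that is itself of $\log\log$ type makes $u_n=\nabla^\perp\psi_n$ a swirl of size $(r\log(1/r))^{-1}$ that vanishes on the inner disk, rather than equalling $A_nw$ there.
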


The coefficients in \cref{thm:main-endpoint} converge strongly to zero in $L^2(0,T;H^1(\T^2))$ in 2D and in $L^2(0,T;H^{3/2}(\T^3))$ in 3D, but do not satisfy a uniform $L^2_tL^\infty_x$ bound. Thus the examples rule out continuity under critical Sobolev control alone; they do not contradict \cref{thm:main-stability} under its full intrinsic spectral hypotheses.

For each fixed spatial Fourier truncation of the velocity, the
Maxwell--Ohm evolution fits the bilinear-control framework of
Ball--Marsden--Slemrod \cite{BallMarsdenSlemrod1982};
Dirr \cite[Theorem~3.1]{Dirr2023} provides trajectory compactness
for bounded $L^2$ control amplitudes and a fixed initial state.
Section~\ref{sec:stability} gives a self-contained Fourier--Volterra
proof of \cref{thm:main-stability}, using coefficient spectral
tightness to remove the cutoff and obtain field, current, and
Lorentz-force convergence.
The constructions in Sections~\ref{sec:2d-endpoint}
and~\ref{sec:3d-endpoint} establish the sharpness of the Sobolev
threshold and produce explicit nonzero Lorentz-force ray defects.

The planar construction uses a moving divergence-free logarithmic core. On its inner disk the core equals its prescribed amplitude exactly,
while its squared $H^1$ norm is bounded by a constant times the
amplitude squared divided by the logarithmic scale.
The choice of logarithmic scale and time-dependent amplitude
makes the $L^2_tH^1_x$ cost vanish.
The amplitude carried along the ray is time-dependent, with its maximum
$A_n(0)$ diverging as $n\to\infty$; this large amplification is concentrated
near $t=0$. It is this concentrated early amplification, rather than a
uniformly large coefficient, that produces the order-one terminal response.
 In
3D the critical core is instead a coherent Fourier-capacity kernel built from
Leray-projected lattice modes. Cubic symmetry gives an order-one point value,
whereas its $H^{3/2}$ norm tends to zero. The outgoing eigenspace of the 3D
Maxwell symbol has multiplicity two, but the projected Ohmic operator is scalar
on that whole space. This allows a polarized six-component packet to propagate
while preserving $\diver B=0$.

The coefficient cores and packet radii degenerate with the sequence index, so a uniform geometric-optics estimate is unavailable. We use an ordered diagonal construction: for each fixed core, first choose an envelope where the coefficient equals its ray value in 2D or is uniformly close to it in 3D; then send the lattice carrier frequency to infinity in an exact constant-coefficient evolution; finally correct the coefficient mismatch by an energy estimate. Every loss depending on the fixed core is absorbed by the later carrier choice. The packet construction is related to polarized geometric optics and Gaussian-beam propagation \cite{JefferisJin2015,LiuPryporov2017,Ralston1982,Rauch2012}; here the constant principal symbol permits compactly supported envelopes together with exact Fourier-mode evolution. The critical cores are related to Sobolev capacity and the concentration behind the Moser--Trudinger inequality \cite{AdamsHedberg1996,Moser1971}. Maxwell-specific high-frequency energy and polarization have also been studied through semiclassical and Wigner measures \cite{Taha2005,AkianSavin2024}; those works concern high-frequency propagation and energy measures rather than the weak prescribed-velocity continuity problem considered here. The ray-supported limits are related in spirit to microlocal defect measures \cite{Gerard1991}, but here the nonlinear Lorentz defects are computed directly and explicitly.

To the best of our knowledge, the combination established here---an intrinsic spectral compactness criterion for prescribed-velocity Maxwell--Ohm evolution, sharp critical $H^{d/2}$ counterexamples in 2D and 3D, and explicit Lorentz-force ray defects---has not previously been obtained. The planar construction also has two secondary consequences: instability around a fixed nonzero decaying electric trajectory and fixed-datum norm inflation by the uniform boundedness principle. These consequences are recorded in \cref{subsec:2d-fixed-data}.
For the related one-dimensional companion manuscript, see \cite{Yu1DCompanion2026}.

The paper is organized as follows. Section~\ref{sec:stability} proves energy well-posedness and \cref{thm:main-stability}. Section~\ref{sec:2d-endpoint} treats the critical index $s=d/2=1$, constructs the planar logarithmic core, proves the constant-core packet and weighted exactification, and establishes part~\textup{(i)} of \cref{thm:main-endpoint}. The two planar fixed-data consequences follow in \cref{subsec:2d-fixed-data}. Section~\ref{sec:3d-endpoint} treats $s=d/2=3/2$, constructs the genuinely 3D $H^{3/2}$ capacity core and six-component packet, and proves part~\textup{(ii)}.

\section{Spectral weak-to-strong stability}\label{sec:stability}

We treat the planar and physical polarizations through one compactness mechanism. Define
\[
 R(z_1,z_2)=(z_2,-z_1),
 \qquad
 \curl E=\partial_1E_2-\partial_2E_1.
\]
On $\T^2$ the planar system is
\begin{equation}\label{eq:planar-system}
 \eps\partial_tE-R\nabla b=-j,
 \qquad
 \partial_tb+\curl E=0,
 \qquad
 j=\sigma(E+bRv).
\end{equation}
It is the invariant polarization of \eqref{eq:intro-3d-system} obtained by taking $E=(E_1,E_2,0)$, $B=(0,0,b)$, and $v=(v_1,v_2,0)$. The planar Lorentz force is $bRj$. The identity
\[
 \int_{\T^2}R\nabla b\cdot E\,\dd x
 =\int_{\T^2}b\,\curl E\,\dd x
\]
gives the same energy cancellation as in 3D.  Define the coefficient operator
\begin{equation}\label{eq:Bv-operator}
 \cB_v(t)X=\bigl(-\sigma\eps^{-1}G_d(v,B_d),0\bigr),
\end{equation}
where $G_d$ is given by \eqref{eq:G-def}.
Then
\begin{equation}\label{eq:Bv-bound}
 \norm{\cB_v(t)X}_{\Hh_d}
 \le C_{\eps,\sigma}\norm{v(t)}_\infty\norm{X}_{\Hh_d}.
\end{equation}
The corresponding Lorentz observable is
\begin{equation}\label{eq:L-def}
 L_d(j,B_d)=
 \begin{cases}
 bRj,&d=2,\\
 j\times B,&d=3.
 \end{cases}
\end{equation}

\begin{lemma}[Energy solutions]\label{lem:energy-solutions}
Let  $v\in L^2(0,T;L^\infty(\T^d))$, and $X_0\in\Hh_d$. The corresponding Maxwell--Ohm system has a unique mild energy solution
\[
 X\in C([0,T];\Hh_d),
 \qquad
 j\in L^2((0,T)\times\T^d).
\]
On sets on which $\norm{v}_{L^2_tL^\infty_x}$ and $\norm{X_0}_{\Hh_d}$ are bounded,
\begin{equation}\label{eq:energy-solution-bound}
 \norm{X}_{C_t\Hh_d}^2
 +\norm{E}_{L^2_{t,x}}^2
 +\norm{j}_{L^2_{t,x}}^2
 \le C.
\end{equation}
If $X_v,X_z$ have coefficients $v,z$ and initial states $X_{v,0},X_{z,0}$, then, on the same bounded sets,
\begin{equation}\label{eq:coefficient-stability}
 \norm{X_v-X_z}_{C_t\Hh_d}^2
 \le C\left(
 \norm{X_{v,0}-X_{z,0}}_{\Hh_d}^2
 +\norm{v-z}_{L^2_tL^\infty_x}^2
 \right).
\end{equation}
The constant depends only on $T,\eps,\sigma$ and the displayed bounds.
\end{lemma}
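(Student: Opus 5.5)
We treat the system as a bounded time-dependent perturbation of the Maxwell group, $\partial_t X = \mathcal A X + \mathcal D X + \cB_v(t)X$. Here $\mathcal A X=(\eps^{-1}\curl B_d,-\curl E)$ is skew-adjoint on $\Hh_d$ and generates a unitary group $e^{t\mathcal A}$, explicit on each Fourier mode. The term $\mathcal D X=(-\sigma\eps^{-1}E,0)$ is bounded and dissipative, so $e^{t(\mathcal A+\mathcal D)}$ is a contraction semigroup. A mild solution is a fixed point of
\[
X(t)=S(t)X_0+\int_0^t S(t-\tau)\cB_v(\tau)X(\tau)\dd\tau,\qquad S(t)=e^{t(\mathcal A+\mathcal D)}.
\]
By \eqref{eq:Bv-bound}, the integral operator maps $C([0,t_1];\Hh_d)$ to itself with norm at most $C_{\eps,\sigma}\int_0^{t_1}\norm{v}_\infty\dd\tau$. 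This is at most $C\sqrt{t_1}\norm{v}_{L^2_tL^\infty_x}$, and more precisely it is small on any interval where $\norm{v}_{L^2(I;L^\infty)}$ is small. Since $\norm{v(\cdot)}_\infty^2$ is integrable, $[0,T]$ splits into finitely many such intervals. Banach's fixed point theorem on each piece then gives existence and uniqueness in $C([0,T];\Hh_d)$, and $j=\sigma(E+G_d(v,B_d))\in L^2_{t,x}$ because $\norm{G_d}_{L^2_x}\le\norm{v}_\infty\norm{B_d}_{L^2}$. Alternatively, a Volterra--Gronwall argument avoids the splitting: the integral kernel is $\norm{v(\tau)}_\infty$, which is $L^1$ in time.

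For the energy bound \eqref{eq:energy-solution-bound}, we first work with regularized data and coefficient: mollify $X_0$ and $v$ in space and approximate $v$ in $L^2_tL^\infty_x$ by smooth fields. For these, solutions are strong and the energy identity holds:
\[
\frac12\frac{\dd}{\dd t}\norm{X}_{\Hh_d}^2=-\int j\cdot E\,\dd x,
\]
using the curl cancellation recorded above. Writing $E=\sigma^{-1}j-G_d$ gives
\[
\frac12\frac{\dd}{\dd t}\norm{X}_{\Hh_d}^2+\sigma^{-1}\norm{j}_2^2=\int j\cdot G_d\le \frac{1}{2\sigma}\norm{j}_2^2+\frac{\sigma}{2}\norm{v}_\infty^2\norm{B_d}_2^2 .
\]
Gronwall then yields
\[
\norm{X}_{C_t\Hh_d}^2+\sigma^{-1}\norm{j}_{L^2}^2\le \norm{X_0}^2\exp\bigl(C\norm{v}_{L^2_tL^\infty}^2\bigr).
\]
The bound on $E$ follows from $E=\sigma^{-1}j-G_d$, or more simply from $\norm{E}_{L^2_{t,x}}\le\sqrt T\norm{E}_{C_tL^2}$. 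These bounds pass to the mild solution because the fixed-point construction depends continuously on the data and the coefficient, which is the content of the stability step below.

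For \eqref{eq:coefficient-stability}, set $Y=X_v-X_z$. It is the mild solution of the equation with coefficient $v$, initial datum $X_{v,0}-X_{z,0}$, and forcing $(\cB_v-\cB_z)X_z=(-\sigma\eps^{-1}G_d(v-z,B_{z}),0)$. Either the Duhamel formula with the contraction $S$, or the energy identity with forcing (again justified by regularization), gives
\[
\norm{Y(t)}\le\norm{Y(0)}+C\int_0^t\norm{v}_\infty\norm{Y}\dd\tau+C\int_0^t\norm{v-z}_\infty\norm{X_z}\dd\tau .
\]
The last term is at most $C\sqrt T\norm{v-z}_{L^2_tL^\infty}\norm{X_z}_{C_t\Hh_d}$, and $\norm{X_z}_{C_t\Hh_d}$ is bounded by the energy bound. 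Gronwall with the $L^1_t$ weight $\norm{v}_\infty$ finishes the argument, with a constant depending only on $T,\eps,\sigma$ and the bounds.

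The main obstacle is justifying the energy identity for mild solutions with merely $L^2_tL^\infty_x$ coefficients, since $X$ has no spatial derivatives. We handle it by the approximation argument above. Rather than approximating the solution directly, we note that the Duhamel estimate already controls $\norm{X}_{C_t\Hh_d}$ without any energy identity. The identity is therefore needed only to get the $L^2$ bound on $j$, and that bound follows from the $C_t\Hh_d$ bound, $\norm{G_d}_{L^2_{t,x}}\le\norm{v}_{L^2_tL^\infty}\norm{B_d}_{C_tL^2}$, and $E\in C_tL^2$.
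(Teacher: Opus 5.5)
Your proof is correct. The part that actually carries it differs from the paper's in where the a priori bounds come from.

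\textbf{Existence and uniqueness.} Both arguments solve \eqref{eq:volterra}. The paper runs Picard iteration on all of $[0,T]$ using the factor $\frac1{r!}(\int_0^T\norm{\cB_v(s)}\,\dd s)^r$. Your contraction argument on finitely many subintervals is interchangeable with this.

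\textbf{Bounds and stability.} For \eqref{eq:energy-solution-bound} and \eqref{eq:coefficient-stability}, the paper uses the dissipative energy identity \eqref{eq:energy-identity-basic}, justified by Fourier--Galerkin approximation, then Young and Gronwall, both for $X$ and for the difference $Y$. Your final argument never needs an energy identity:
\begin{itemize}
\item contractivity of $S_d$, \eqref{eq:Bv-bound}, and Gronwall with the $L^1_t$ weight $\norm{v(t)}_\infty$ bound $\norm{X}_{C_t\Hh_d}$ and $\norm{Y}_{C_t\Hh_d}$;
\item the $E$ bound is the trivial $\sqrt T$ estimate;
\item the current bound follows from Ohm's law and $\norm{G_d(v,B_d)}_{L^2_{t,x}}\le\norm{v}_{L^2_tL^\infty_x}\norm{B_d}_{C_tL^2_x}$.
\end{itemize}
This route is more elementary and stays entirely at the level of mild solutions. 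It also shows that the field bounds need only $v\in L^1_tL^\infty_x$; the $L^2_t$ integrability enters only through the current. What the paper's route adds is the Ohmic dissipation structure ($\sigma\norm{E}_2^2$, or your $\sigma^{-1}\norm{j}_2^2$). That is physically natural but not needed for this lemma, since the constant may depend on $T$.

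\textbf{A caution about your middle paragraph.} The regularization step as written would fail. Smooth fields are not dense in $L^\infty(\T^d)$, so a general $v\in L^2_tL^\infty_x$ cannot be approximated in $L^2_tL^\infty_x$ by smooth fields. Spatial mollification converges only a.e.\ with a uniform $L^\infty$ bound. The stability estimate \eqref{eq:coefficient-stability}, which is measured in that norm, therefore cannot carry the energy identity to the limit.

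If you do want the identity, truncate the state rather than the coefficient, as the paper's Galerkin step does. The truncated solutions converge to $X$ in $C_t\Hh_d$ because $\int_0^T\norm{(I-P_N)\cB_v(s)X(s)}_{\Hh_d}\,\dd s\to0$ by dominated convergence.

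Since your last paragraph makes the identity unnecessary, this does not affect correctness. You should simply drop the regularization paragraph.
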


\begin{proof}
The free Maxwell generators act by
\[
 \begin{aligned}
 A_{0,2}(E,b)&=(\eps^{-1}R\nabla b,-\curl E),\\
 A_{0,3}(E,B)&=(\eps^{-1}\nabla\times B,-\nabla\times E).
 \end{aligned}
\]
These are matrix-valued differential operators; their symbols on a fixed spatial Fourier mode are ordinary finite-dimensional matrices. With the electric projection $P_E(E,B_d)=(E,0)$, set
\[
 A_{\sigma,d}=A_{0,d}-\sigma\eps^{-1}P_E.
\]
On each spatial Fourier mode the energy-conjugated symbol of $A_{0,d}$ is skew-Hermitian. Consequently, with
\[
 D(A_{0,d})=\{X\in\Hh_d:A_{0,d}X\in\Hh_d\},
\]
$A_{0,d}$ is skew-adjoint. Equivalently, in 3D its domain is the natural product of $H(\curl)$ spaces; the longitudinal spectral modes cause no difficulty. The bounded dissipative perturbation $-\sigma\eps^{-1}P_E$ makes $A_{\sigma,d}$ maximal dissipative. It therefore generates the contraction semigroup $S_d(t)=e^{tA_{\sigma,d}}$, and its constant-coefficient Fourier symbol shows that $S_d(t)$ preserves every spatial Fourier mode. We use the standard generation and bounded-perturbation results in the form recorded in \cite{Pazy1983}.

The system can be written as the Volterra equation
\begin{equation}\label{eq:volterra}
 X(t)=S_d(t)X_0+\int_0^tS_d(t-s)\cB_v(s)X(s)\,\dd s.
\end{equation}
Since $S_d$ is a contraction semigroup and, by \eqref{eq:Bv-bound},
\[
 \norm{\cB_v(t)}_{\mathcal L(\Hh_d)}
 \le C_{\eps,\sigma}\norm{v(t)}_\infty
 \in L^1(0,T),
\]
the standard Picard iteration for \eqref{eq:volterra} converges in
$C([0,T];\Hh_d)$; indeed, its $r$th iterate carries the factor
\[
 \frac{1}{r!}
 \left(
 \int_0^T\norm{\cB_v(s)}_{\mathcal L(\Hh_d)}\,\dd s
 \right)^r.
\]
Hence \eqref{eq:volterra} has a unique solution in $C_t\Hh_d$.

Fourier--Galerkin approximation justifies the energy calculation
\begin{equation}\label{eq:energy-identity-basic}
 \frac12\frac{\dd}{\dd t}\norm{X}_{\Hh_d}^2
 +\sigma\norm{E}_2^2
 =-\sigma\int_{\T^d}G_d(v,B_d)\cdot E\,\dd x.
\end{equation}
Young's inequality gives
\[
 \frac{\dd}{\dd t}\norm{X}_{\Hh_d}^2
 +\sigma\norm{E}_2^2
 \le C_{\eps,\sigma}\norm{v}_\infty^2\norm{X}_{\Hh_d}^2.
\]
Gronwall's lemma controls $X$ and $E$. Since $j=\sigma(E+G_d(v,B_d))$, it also controls $j$ and proves \eqref{eq:energy-solution-bound}.

For the difference $Y=X_v-X_z=:(e,h)$, split
\[
 \cB_vX_v-\cB_zX_z=\cB_vY+(\cB_v-\cB_z)X_z.
\]
The same energy calculation gives
\[
 \frac{\dd}{\dd t}\norm{Y}_{\Hh_d}^2
 +\sigma\norm{e}_2^2
 \le C\norm{v}_\infty^2\norm{Y}_{\Hh_d}^2
 +C\norm{v-z}_\infty^2\norm{B_{d,z}}_2^2.
\]
The already established bound for $X_z$ and Gronwall's lemma prove \eqref{eq:coefficient-stability}.
\end{proof}

\begin{definition}[Spectrally tight coefficients]\label{def:spectral-tightness}
A sequence $v_n$ with candidate limit $v$ is spectrally tight on $(0,T)\times\T^d$ if
\begin{equation}\label{eq:spectral-bound}
 \sup_n\norm{v_n}_{L^2_tL^\infty_x}
 +\norm{v}_{L^2_tL^\infty_x}<\infty,
\end{equation}
\begin{equation}\label{eq:spectral-tail}
 \lim_{M\to\infty}\left(
 \sup_n\norm{(I-P_M)v_n}_{L^2_tL^\infty_x}
 +\norm{(I-P_M)v}_{L^2_tL^\infty_x}
 \right)=0,
\end{equation}
and
\begin{equation}\label{eq:modewise-convergence}
 \widehat v_n(k)\weak\widehat v(k)
 \quad\text{in }L^2(0,T)
 \quad\text{for every }k\in\mathbb Z^d.
\end{equation}
Here $P_M$ denotes the spatial Fourier projection onto $|k|\le M$.
\end{definition}

The terms involving the candidate limit $v$ in \eqref{eq:spectral-bound}--\eqref{eq:spectral-tail} are included only for symmetry: they are automatic from the corresponding uniform assumptions on $v_n$ and the fixed-mode convergence \eqref{eq:modewise-convergence}. Indeed, for fixed $L$ and $M$, weak lower semicontinuity on the finite-dimensional space of spatial trigonometric polynomials gives
\[
 \norm{P_Lv-P_Mv}_{L^2_tL^\infty_x}
 \le\liminf_{n\to\infty}
 \norm{P_Lv_n-P_Mv_n}_{L^2_tL^\infty_x}.
\]
On the other hand,
\[
 \norm{P_Lv_n-P_Mv_n}_{L^2_tL^\infty_x}
 \le
 \norm{(I-P_L)v_n}_{L^2_tL^\infty_x}
 +\norm{(I-P_M)v_n}_{L^2_tL^\infty_x}.
\]
It follows that $P_Mv$ is Cauchy in $L^2_tL^\infty_x$. Fixed-mode convergence identifies its limit with $v$, and hence $v\in L^2_tL^\infty_x$ and
$\norm{(I-P_M)v}_{L^2_tL^\infty_x}\to0$. Thus one may equivalently omit all limit-side assumptions from \eqref{eq:spectral-bound}--\eqref{eq:spectral-tail} and define $v$ through \eqref{eq:modewise-convergence}.

\begin{proof}[Proof of \cref{thm:main-stability}]
By \cref{lem:energy-solutions}, uniformly on bounded coefficient and initial-data sets,
\begin{equation}\label{eq:main-uniform-bounds}
 \norm{X}_{C_t\Hh_d}+\norm{j}_{L^2_{t,x}}\le C,
\end{equation}
and solutions with coefficients $v,z$ satisfy
\begin{equation}\label{eq:main-coeff-bound}
 \norm{X_v-X_z}_{C_t\Hh_d}
 \le C\left(
 \norm{X_{v,0}-X_{z,0}}_{\Hh_d}
 +\norm{v-z}_{L^2_tL^\infty_x}
 \right).
\end{equation}
The constants are uniform for the truncations used below: indeed,
$P_Mv=v-(I-P_M)v$, and \eqref{eq:spectral-bound}--\eqref{eq:spectral-tail} bound $P_Mv$ and $P_Mv_n$ in $L^2_tL^\infty_x$ independently of all sufficiently large $M$ and of $n$.

Fix $M$, put $V_n=P_Mv_n$ and $V=P_Mv$, and denote by $X_n^M$ and $X^M$ the exact Maxwell--Ohm solutions with coefficients $V_n$ and $V$, respectively, and initial states $X_{0,n}$ and $X_0$. Only the coefficient is truncated: these are not finite-dimensional Galerkin solutions.

Using a real trigonometric basis, $\cB_{V_n}(t)$ is a finite sum
of scalar Fourier amplitudes multiplying fixed bounded operators
on $\Hh_d$.
We give a direct Fourier--Volterra proof below that makes the
state-frequency tails explicit.
For a fixed initial state, trajectory compactness at this cutoff
can also be obtained from Dirr \cite[Theorem~3.1]{Dirr2023}
with $p=2$.
Weak convergence of the Fourier amplitudes identifies the limit,
and uniqueness yields convergence of the whole sequence;
varying initial states are handled by
\eqref{eq:main-coeff-bound}.
For related Dyson-series compactness arguments, see
\cite{BoussaidCaponigroChambrion2019}.

For a generic input $\Xi\in\Hh_d$, put $\cB_n=\cB_{V_n}$, define $\cT_{n,0}^M(t)\Xi=S_d(t)\Xi$ and, for $r\ge1$,
\begin{align}\label{eq:duhamel-simplex}
 \cT_{n,r}^M(t)\Xi
 ={}&\int_{0<t_r<\cdots<t_1<t}
 S_d(t-t_1)\cB_n(t_1)S_d(t_1-t_2)\cB_n(t_2)\cdots\notag\\
 &\hspace{31mm}\cdots S_d(t_{r-1}-t_r)\cB_n(t_r)S_d(t_r)\Xi
 \,\dd t_r\cdots\dd t_1.
\end{align}
The standard Volterra (Dyson) series is
\begin{equation}\label{eq:time-ordered-expansion}
 U_n^M(t,0)\Xi=\sum_{r=0}^\infty\cT_{n,r}^M(t)\Xi.
\end{equation}
Here $U_n^M(t,0)$ is the evolution operator associated with the coefficient $V_n=P_Mv_n$; in particular,
\[
 X_n^M(t)=U_n^M(t,0)X_{0,n}.
\]
Since $S_d(t)$ is contractive, symmetry of the product integrand on the time cube gives
\begin{align}\label{eq:duhamel-factorial}
 \norm{\cT_{n,r}^M(t)\Xi}_{\Hh_d}
 &\le \norm{\Xi}_{\Hh_d}
 \int_{0<t_r<\cdots<t_1<t}
 \prod_{j=1}^r\norm{\cB_n(t_j)}_{\mathcal L(\Hh_d)}
 \,\dd t_r\cdots\dd t_1\notag\\
 &=\frac1{r!}
 \left(\int_0^t\norm{\cB_n(s)}_{\mathcal L(\Hh_d)}\,\dd s\right)^r
 \norm{\Xi}_{\Hh_d}
 \le\frac{A_M^r}{r!}\norm{\Xi}_{\Hh_d},
\end{align}
where
\[
 A_M=\sup_n\int_0^T
 \norm{\cB_n(t)}_{\mathcal L(\Hh_d)}\,\dd t<\infty.
\]
Thus the standard factorial factor is exactly the volume reduction from the time cube to the ordered simplex. The free semigroup preserves Fourier support, while multiplication by $V_n$ shifts it by at most $M$. Hence, if $\Xi$ is supported in $|k|\le K$, then $\cT_{n,r}^M(t)\Xi$ is supported in $|k|\le K+rM$.

Put $Q_R=I-P_R$, $B_0=\sup_n\norm{X_{0,n}}_{\Hh_d}$, and
\[
 \delta_K=\sup_n\norm{Q_KX_{0,n}}_{\Hh_d}.
\]
Strong convergence of $X_{0,n}$ implies $\delta_K\to0$. Split
\[
 X_{0,n}=P_KX_{0,n}+Q_KX_{0,n}.
\]
If $R\ge K+LM$, then $Q_R\cT_{n,r}^M(t)P_KX_{0,n}=0$ for $r\le L$ by the Fourier-support bound. On $Q_KX_{0,n}$, bound the full Volterra sum by $\sum_{r\ge0}A_M^r/r!=e^{A_M}$. Thus
\[
 \begin{aligned}
 \norm{Q_RX_n^M(t)}_{\Hh_d}
 &\le\sum_{r>L}\frac{A_M^r}{r!}\norm{P_KX_{0,n}}_{\Hh_d}
   +e^{A_M}\norm{Q_KX_{0,n}}_{\Hh_d},
 \end{aligned}
\]
and hence
\begin{equation}\label{eq:factorial-tail-estimate}
 \sup_n\sup_{t\le T}\norm{Q_RX_n^M(t)}_{\Hh_d}
 \le B_0\sum_{r>L}\frac{A_M^r}{r!}+e^{A_M}\delta_K.
\end{equation}
For fixed $M$, first choose $K$, then $L$, and finally $R\ge K+LM$. This proves
\begin{equation}\label{eq:uniform-state-tail}
 \lim_{R\to\infty}\sup_n\sup_{t\le T}
 \norm{(I-P_R)X_n^M(t)}_{\Hh_d}=0.
\end{equation}

For fixed $R$, the projected equation is finite-dimensional. From \eqref{eq:Bv-bound}, the energy bound, and Cauchy--Schwarz in time,
\begin{equation}\label{eq:finite-mode-equicontinuity}
 \norm{P_RX_n^M(t)-P_RX_n^M(s)}_{\Hh_d}
 \le C_R|t-s|+C|t-s|^{1/2}.
\end{equation}
The first term comes from integrating $P_RA_{\sigma,d}X_n^M$, since $A_{\sigma,d}$ is bounded on the finitely many retained modes. For the second term, integrate $P_R\cB_nX_n^M$ and use
\[
 \int_s^t\norm{V_n(\tau)}_\infty\,\dd\tau
 \le |t-s|^{1/2}\norm{V_n}_{L^2_tL^\infty_x}.
\]
Arzel\`a--Ascoli makes each projected family $\{P_RX_n^M\}_n$ relatively compact in $C_t\Hh_d$. Given any tolerance, \eqref{eq:uniform-state-tail} first fixes $R$ so that the complementary tails are uniformly small; a finite net for the projected family then gives a finite net for $\{X_n^M\}_n$. Thus $X_n^M$ is relatively compact in $C_t\Hh_d$.

Let a subsequence converge to $X_*^M$. To identify it, set $m_2=2$, $m_3=3$, and write
\[
 P_M(v_n-v)
 =\sum_{|k|\le M}\sum_{\alpha=1}^{m_d}
 c_{n,k,\alpha}(t)e_k(x)e_\alpha,
 \qquad
 c_{n,k,\alpha}\weak0\quad\text{in }L^2(0,T),
\]
where $e_k$ is the scalar Fourier character and $e_\alpha$ is the $\alpha$th coordinate vector. For a smooth test field $\Phi$ and each fixed $(k,\alpha)$, the strong convergence of the magnetic component $B_{d,n}^M$ gives
\[
 g_{n,k,\alpha}(t)
 :=\int_{\T^d}G_d(e_ke_\alpha,B_{d,n}^M)\cdot\Phi\,\dd x
 \longrightarrow
 g_{k,\alpha}(t)
 :=\int_{\T^d}G_d(e_ke_\alpha,B_{d,*}^M)\cdot\Phi\,\dd x
\]
strongly in $L^2(0,T)$. Consequently,
\[
 \sum_{|k|\le M}\sum_{\alpha=1}^{m_d}
 \int_0^Tc_{n,k,\alpha}(t)g_{n,k,\alpha}(t)\,\dd t
 \longrightarrow0.
\]
The preceding convergence shows that $X_*^M$ satisfies the Maxwell--Ohm equation with coefficient $P_Mv$ in the distributional sense. Moreover $X_*^M(0)=X_0$, because the convergence holds in $C([0,T];\Hh_d)$, and
$\cB_{P_Mv}X_*^M\in L^1(0,T;\Hh_d)$, because \eqref{eq:Bv-bound} gives
\[
 \norm{\cB_{P_Mv}X_*^M}_{L^1_t\Hh_d}
 \le C_{\eps,\sigma}\sqrt T\norm{P_Mv}_{L^2_tL^\infty_x}
 \norm{X_*^M}_{C_t\Hh_d}<\infty.
\]
The variation-of-constants theorem for weak $C_0$-semigroup solutions \cite{Ball1977}, equivalently testing the distributional equation against backward adjoint semigroup orbits, therefore yields
\[
 X_*^M(t)=S_d(t)X_0
 +\int_0^tS_d(t-s)\cB_{P_Mv}(s)X_*^M(s)\,\dd s.
\]
Thus $X_*^M$ is precisely the mild solution constructed in \cref{lem:energy-solutions}. Uniqueness identifies $X_*^M=X^M$ and removes the subsequence.

Finally, \eqref{eq:main-coeff-bound} and \eqref{eq:spectral-tail} give
\[
 \norm{X_n-X}_{C_t\Hh_d}
 \le
 \norm{X_n-X_n^M}_{C_t\Hh_d}
 +\norm{X_n^M-X^M}_{C_t\Hh_d}
 +\norm{X^M-X}_{C_t\Hh_d}
 \longrightarrow0
\]
by first sending $n\to\infty$ and then $M\to\infty$.

It remains to identify the nonlinear observables. First,
\[
 G_d(v_n,B_{d,n})-G_d(v,B_d)
 =G_d(v_n,B_{d,n}-B_d)+G_d(v_n-v,B_d).
\]
The first term tends strongly to zero in $L^2_{t,x}$. For the second, fix $M$ and split
$v_n-v=P_M(v_n-v)+(I-P_M)(v_n-v)$. The high-frequency part is uniformly small in $L^2_{t,x}$ by \eqref{eq:spectral-tail}. For $\Phi\in L^2_{t,x}$, the finite-band part is a finite sum over $|k|\le M$ and $1\le\alpha\le m_d$ of pairings
\[
 \int_0^Tc_{n,k,\alpha}(t)
 \left(\int_{\T^d}G_d(e_ke_\alpha,B_d)\cdot\Phi\,\dd x\right)\dd t,
\]
and the parenthesized functions belong to $L^2(0,T)$. Condition \eqref{eq:modewise-convergence} therefore makes each pairing tend to zero. Sending first $n\to\infty$ and then $M\to\infty$ proves
\[
 G_d(v_n,B_{d,n})\weak G_d(v,B_d)
 \quad\text{in }L^2_{t,x}.
\]
Together with strong field convergence, this proves \eqref{eq:intro-main-current}.

Finally, for a smooth test field $\Phi$, the difference
\[
 \int\bigl(L_d(j_n,B_{d,n})-L_d(j,B_d)\bigr)\cdot\Phi
\]
is split into the factor $B_{d,n}-B_d$, which is strong in $L^2_{t,x}$, and the factor $j_n-j$, which is weak in $L^2_{t,x}$. This proves \eqref{eq:intro-main-force}.

If \eqref{eq:intro-sobolev-hyp} holds, weak convergence gives the fixed-mode convergence and a uniform $L^2_tH^s_x$ bound. Cauchy--Schwarz in Fourier space gives
\[
 \norm{(I-P_M)f}_\infty
 \le\left(\sum_{|k|>M}\langle k\rangle^{-2s}\right)^{1/2}
 \norm{f}_{H^s}
 \le CM^{d/2-s}\norm{f}_{H^s}.
\]
Thus \eqref{eq:spectral-bound}--\eqref{eq:spectral-tail} hold and the preceding argument applies.
\end{proof}

\begin{remark}[Magnetic divergence]\label{rem:magnetic-divergence}
In 3D, $\diver B_0=0$ is propagated by Faraday's law. The semigroup and compactness arguments above do not require this constraint, so \cref{thm:main-stability} applies on the full energy space and, by restriction, on its divergence-free magnetic subspace.
\end{remark}

\begin{remark}[The critical exponent]
At $d=2$, $s=1$, and at $d=3$, $s=3/2$, the exponent in \eqref{eq:intro-sobolev-tail} is zero. The missing uniform spectral tail is precisely the capacity exploited in \cref{sec:2d-endpoint,sec:3d-endpoint}.
\end{remark}

\section{The critical endpoint \texorpdfstring{$s=d/2=1$}{s=d/2=1} in 2D}\label{sec:2d-endpoint}

We now prove the 2D part of \cref{thm:main-endpoint}, namely the sharp failure of weak-to-strong stability at $s=d/2=1$, the space $L^2_tH^1_x$. Throughout this section $\eps=1$, $w=e_1$, and
\begin{equation}\label{eq:2d-ray-phase}
 x_c(t)=x_0+tw,
 \qquad
 \phi(t,x)=w\cdot(x-x_0)-t.
\end{equation}
The choice of a coordinate direction only ensures that the carrier frequencies belong to the Fourier lattice; any rational direction may be used after rescaling. All phases use the period-$2\pi$ convention fixed in the introduction.

\subsection{A moving divergence-free logarithmic core}

\begin{lemma}[Divergence-free logarithmic core]\label{lem:moser-core}
There is a family, smooth also in the parameters $(A,L)$, of smooth divergence-free vector fields $W_{A,L}$, supported in a fixed disk, such that $W_{A,L}=Aw$ on the concentric disk of radius $ce^{-L}$,
\begin{equation}\label{eq:moser-Linfty}
 \norm{W_{A,L}}_{L^\infty(\T^2)}\le CA,
\end{equation}
and
\begin{equation}\label{eq:moser-H1}
 \norm{W_{A,L}}_{H^1(\T^2)}^2
 \le C\left(\frac{A^2}{L}+A^2e^{-2L}\right)
\end{equation}
for $A\ge1$ and $L\ge2$.
\end{lemma}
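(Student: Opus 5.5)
Our plan is to construct $W_{A,L}$ as the perpendicular gradient of a stream function, $W=\nabla^\perp\psi$, so that the divergence-free property is automatic. The stream function will be built from a Moser-type logarithmic cutoff. Fix a center $x_*$ and work in a fixed disk $D(x_*,r_0)$ with $r_0<1$ inside one period cell.

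First we build the standard Moser profile
\[
 \eta_L(x)=\min\Bigl\{1,\frac{\log(r_0/|x-x_*|)}{L}\Bigr\}.
\]
It equals $1$ on the disk of radius $r_0e^{-L}$, vanishes for $|x-x_*|\ge r_0$, takes values in $[0,1]$, and satisfies $\norm{\nabla\eta_L}_2^2=2\pi/L$. We replace it by a smooth version $\chi_L=\theta(\log(r_0/|x-x_*|)/L)$, where $\theta$ is a smooth nondecreasing function with $\theta=0$ on $(-\infty,0]$ and $\theta=1$ on $[1,\infty)$. This is smooth in $L$, and the same bound $\norm{\nabla\chi_L}_2^2\le C/L$ holds. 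The same computation also gives $\int|\nabla^2\chi_L|\,|x-x_*|^2\lesssim 1/L$.

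Next we set
\[
 \psi(x)=A\,\chi_L(x)\,\ell(x),
\]
where $\ell$ is the linear function with $\nabla^\perp\ell=w$; with $w=e_1$, this is $\ell=\pm(x_2-x_{*,2})$, with the sign chosen according to the convention for $\nabla^\perp$. Then
\[
 W=A\chi_L w+A\ell\,\nabla^\perp\chi_L .
\]
This field is smooth, divergence-free, depends smoothly on $(A,L)$, and is supported in $D(x_*,r_0)$. On the inner disk of radius $ce^{-L}$ (taking $c=r_0$) we have $\chi_L\equiv1$, so $W=Aw$ there exactly. For the $L^\infty$ bound, $|\ell|\le|x-x_*|$ and $|\nabla\chi_L|\le C/(L|x-x_*|)$, so $|W|\le CA(1+1/L)\le CA$, which is \eqref{eq:moser-Linfty}.

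For \eqref{eq:moser-H1} we estimate the pieces separately. The $L^2$ part satisfies $\norm{W}_2^2\le CA^2$, which is only $O(A^2)$ and would violate the claimed bound. This is the main obstacle: the $L^2$ mass has to be small as well. To fix it, we choose the outer radius depending on $L$ so that the support has area $O(1/L)$. We take the outer radius $r_0e^{-\sqrt L}$, or simply $r_L=e^{-L/2}$ with inner radius $e^{-L}$, which is still a logarithmic ratio of size $\sim L$. The gradient bound is unchanged in form: $\norm{\nabla\chi}_2^2\le C/\log(r_{\rm out}/r_{\rm in})\le C/L$. The $L^2$ norm becomes $\le CA^2 r_L^2\le CA^2e^{-L}$, which is acceptable since $e^{-L}\le C/L$. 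The inner disk radius is then $ce^{-L}$ after adjusting constants.

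For the gradient, we have $\nabla W=A\,w\otimes\nabla\chi+A\nabla\ell\otimes\nabla^\perp\chi+A\ell\nabla\nabla^\perp\chi$. The first two terms have squared $L^2$ norm at most $CA^2/L$. The third is bounded pointwise by $A|x-x_*|\,C/(L|x-x_*|^2)$ (with an extra factor $1/L^2$ on the $\theta''$ term), so its squared norm is at most $CA^2L^{-2}\int_{r_{\rm in}}^{r_{\rm out}}r^{-1}\,\dd r\le CA^2/L$. Summing gives $\norm{W}_{H^1}^2\le C(A^2/L+A^2e^{-2L})$; the $e^{-2L}$ term only absorbs the harmless constants from the inner cap region. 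The fixed disk in the statement is $D(x_*,r_0)$, which contains all these shrinking supports.
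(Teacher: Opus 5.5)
Your construction is correct, but it departs from the paper's proof at the point you flagged as the main obstacle, and that obstacle does not actually arise.

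\emph{The fixed-radius field already has small $L^2$ norm.} Keep the fixed outer radius $r_0$. Any smooth profile with $\theta(0)=0$ satisfies $\theta(z)\le\norm{\theta'}_\infty z$ for $z\ge0$, so $\chi_L(r)\le CL^{-1}\log(r_0/r)$. The substitution $r=r_0e^{-Lz}$ turns $r\,\dd r$ into $Lr_0^2e^{-2Lz}\,\dd z$. This gives $\int\chi_L^2\,\dd x\lesssim L\int_0^\infty z^2e^{-2Lz}\,\dd z\lesssim L^{-2}$. The term $A\ell\nabla^\perp\chi_L$ is pointwise $O(A/L)$ on the annulus. Hence the original field satisfies $\norm{W_{A,L}}_2^2\lesssim A^2L^{-2}+A^2e^{-2L}$, not merely $O(A^2)$. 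The underlying reason is that the profile is of order one only where the area is exponentially small. This weighted-area computation is exactly how the paper proves \eqref{eq:moser-H1}, with $\Theta(z)\le Cz$ built into its cutoff.

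\emph{Your shrinking-support alternative is also valid.} You shrink the outer radius to $e^{-L/2}$ (or $r_0e^{-\sqrt L}$) and keep the inner radius $e^{-L}$.
\begin{itemize}
\item The logarithmic ratio stays comparable to $L$, so your gradient and Hessian bounds go through unchanged.
\item The crude bound $|W|\le CA$ times an area of order $e^{-L}$ gives $\norm{W}_2^2\le CA^2e^{-L}\le CA^2/L$.
\item The supports stay inside the fixed disk, $W=Aw$ on the concentric disk of radius $e^{-L}$, and joint smoothness in $(A,L)$ is preserved.
\end{itemize}

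\emph{Comparison.} Your route replaces the weighted integral by a support-size argument. The cost is an $L$-dependent support and a weaker, though sufficient, $L^2$ bound. The paper's route keeps a fixed support and obtains the sharper $L^2$ estimate with no modification of the Moser profile.
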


\begin{proof}
Work in a Euclidean disk whose closure is contained in one coordinate chart of $\T^2$, and fix its radius $r_0>0$. Choose $\Theta\in C^\infty(\R;[0,1])$ such that
\[
 \Theta(z)=0\quad(z\le0),
 \qquad
 \Theta(z)=z\quad\left(\frac14\le z\le\frac34\right),
 \qquad
 \Theta(z)=1\quad(z\ge1).
\]
The transition pieces may be chosen once and for all so that
$|\Theta'|+|\Theta''|\le C$ and $\Theta(z)\le Cz$ for $0\le z\le1$. For $L\ge2$ define
\begin{equation}\label{eq:chiL}
 \chi_L(r)=
 \begin{cases}
 \Theta\bigl(L^{-1}\log(r_0/r)\bigr),&0<r<r_0,\\
 0,&r\ge r_0,
 \end{cases}
\end{equation}
and put $\chi_L(0)=1$. The flatness of $\Theta$ at $0$ and $1$ makes this a smooth radial function. It equals one on $r\le r_0e^{-L}$, vanishes on $r\ge r_0$, and is smooth jointly in $(L,x)$.

On the transition annulus, with $z=L^{-1}\log(r_0/r)$, direct differentiation gives
\[
 r\chi_L'(r)=-L^{-1}\Theta'(z),
 \qquad
 r^2\chi_L''(r)=L^{-2}\Theta''(z)+L^{-1}\Theta'(z).
\]
Consequently,
\begin{equation}\label{eq:chi-derivative-bounds}
 0\le\chi_L\le1,
 \qquad
 |r\chi_L'(r)|\le\frac{C}{L},
 \qquad
 |r^2\chi_L''(r)|\le\frac{C}{L}
\end{equation}
on $r_0e^{-L}<r<r_0$.
For a radial function, the Hessian therefore satisfies
\[
 |D^2\chi_L(x)|
 \lesssim |\chi_L''(r)|+\frac{|\chi_L'(r)|}{r}
 \lesssim\frac1{Lr^2},\qquad r=|x|,
\]
on the same annulus.

Set
\[
 \psi_{A,L}(x)=-A(Rw\cdot x)\chi_L(|x|),
 \qquad
 W_{A,L}=R\nabla\psi_{A,L}.
\]
Because $R\nabla$ is the planar perpendicular gradient, $\diver W_{A,L}=0$. Since $R^2=-I$, direct differentiation gives
\begin{equation}\label{eq:WA-formula}
 W_{A,L}=Aw\chi_L-A(Rw\cdot x)R\nabla\chi_L.
\end{equation}
Thus $W_{A,L}=Aw$ in the inner disk. Moreover, \eqref{eq:chi-derivative-bounds} and $|Rw\cdot x|\le r$ imply $|W_{A,L}|\le CA$. Differentiating \eqref{eq:WA-formula} and using the radial Hessian bound gives
\[
 |\nabla W_{A,L}|
 \lesssim A|\nabla\chi_L|+A|Rw\cdot x|\,|D^2\chi_L|
 \lesssim\frac{A}{Lr}
\]
on the logarithmic annulus. Integration in polar coordinates therefore gives
\[
 \int|\nabla W_{A,L}|^2
 \lesssim A^2L^{-2}\int_{r_0e^{-L}}^{r_0}\frac{\dd r}{r}
 \lesssim\frac{A^2}{L}.
\]
For the $L^2$ part, the inner disk contributes $O(A^2e^{-2L})$. On the logarithmic annulus, \eqref{eq:WA-formula} and $r|\chi_L'|\le C/L$ yield
\[
 \int|W_{A,L}|^2
 \lesssim A^2\int_{r_0e^{-L}}^{r_0}
 \left(\chi_L(r)^2+\frac1{L^2}\right)r\,\dd r.
\]
The substitution $r=r_0e^{-Lz}$ and the bound $\Theta(z)\le Cz$ give
\[
 \int_{r_0e^{-L}}^{r_0}\chi_L(r)^2r\,\dd r
 =Lr_0^2\int_0^1\Theta(z)^2e^{-2Lz}\,\dd z
 \le CL\int_0^\infty z^2e^{-2Lz}\,\dd z
 \le\frac{C}{L^2}.
\]
Combining the last three displays proves \eqref{eq:moser-H1}, while \eqref{eq:WA-formula} and \eqref{eq:chi-derivative-bounds} prove \eqref{eq:moser-Linfty}. Formula \eqref{eq:chiL} also verifies smooth dependence on $(A,L)$.
\end{proof}

Put $\delta_n=e^{-n}$ and define
\begin{equation}\label{eq:An-Ln}
 A_n(t)=1+\frac{2}{\sigma(t+\delta_n)},
 \qquad
 L_n(t)=n+A_n(t)^2,
\end{equation}
and
\begin{equation}\label{eq:un-definition}
 u_n(t,x)=W_{A_n(t),L_n(t)}(x-x_c(t)).
\end{equation}

\begin{proposition}[Vanishing critical cost]\label{prop:2d-critical-cost}
The fields $u_n$ are smooth and divergence-free, and
\begin{equation}\label{eq:un-H1-vanish}
 u_n\longrightarrow0
 \quad\text{strongly in }L^2(0,T;H^1(\T^2)).
\end{equation}
\end{proposition}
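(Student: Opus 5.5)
Smoothness and the divergence-free property come directly from \cref{lem:moser-core}. The family $W_{A,L}$ is smooth jointly in $(A,L,x)$. On $[0,T]$ the maps $t\mapsto A_n(t)$ and $t\mapsto L_n(t)$ are smooth, because $t+\delta_n\ge\delta_n>0$. Translation by the smooth path $x_c(t)$ keeps smoothness in $(t,x)$. Since translation commutes with $\diver$, each $u_n(t,\cdot)$ is divergence-free. Note that $A_n\ge1$ and $L_n\ge n+1\ge2$, so the lemma applies for every $n\ge1$ and every $t\in[0,T]$.

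For the vanishing cost, the $H^1$ norm is translation invariant on $\T^2$. By \eqref{eq:moser-H1} it therefore suffices to bound
\[
 \int_0^T\norm{u_n(t)}_{H^1}^2\,\dd t
 \le C\int_0^T\left(\frac{A_n(t)^2}{L_n(t)}+A_n(t)^2e^{-2L_n(t)}\right)\dd t .
\]
The choice $L_n=n+A_n^2$ is built to control both terms.

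For the first term, $A_n^2/(n+A_n^2)\le 1$ pointwise, and it tends to $0$ at every fixed $t>0$, since $A_n(t)\to 1+2/(\sigma t)$ stays bounded while $n\to\infty$. Dominated convergence on $[0,T]$ then gives convergence to $0$. One can also make this quantitative: split at $t=\eta$, bound the piece on $[0,\eta]$ by $\eta$, and bound the rest by $C_\eta/n$.

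For the second term, $A^2e^{-2A^2}\le C$ uniformly, so
\[
 A_n^2e^{-2L_n}=e^{-2n}A_n^2e^{-2A_n^2}\le Ce^{-2n},
\]
and its time integral is at most $CTe^{-2n}\to0$.

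The only delicate point is near $t=0$, where $A_n(0)\sim 2/(\sigma\delta_n)=2e^n/\sigma$ blows up. There one cannot bound $A_n^2/L$ by $A_n^2/n$, because $\int_0^T(t+\delta_n)^{-2}\,\dd t\sim e^n/n$ diverges. The quadratic growth of $L_n$ in $A_n$ is exactly what saturates the ratio at $1$. The factor $e^{-2L}$ is harmless because $L_n\ge A_n^2$ kills any polynomial growth in $A_n$. Once the integrand bound above is written down, nothing else needs to be checked.
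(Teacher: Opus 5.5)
Your proof is correct and follows the paper's argument: translation invariance of the $H^1$ norm together with \eqref{eq:moser-H1}, the pointwise bound $A_n^2/(n+A_n^2)\le1$ with decay at every fixed $t>0$, and $\sup_{a\ge0}a^2e^{-2a^2}<\infty$ to make the exponential term $O(e^{-2n})$. The only difference is cosmetic. The paper splits the time integral at $t=n^{-1/2}$ to obtain an explicit $O(n^{-1/2})$ rate, whereas you use dominated convergence or a split at a fixed $\eta$.
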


\begin{proof}
Translation does not change the Sobolev norm. Since $L_n=n+A_n^2$, \cref{lem:moser-core} gives
\begin{equation}\label{eq:un-H1-estimate}
 \int_0^T\norm{u_n(t)}_{H^1}^2\,\dd t
 \le C\int_0^T\frac{A_n(t)^2}{n+A_n(t)^2}\,\dd t
 +C\int_0^TA_n(t)^2e^{-2(n+A_n(t)^2)}\,\dd t.
\end{equation}
The second integral is $O(e^{-2n})$, because
$\sup_{a\ge0}a^2e^{-2a^2}<\infty$. Split the first integral at $t=n^{-1/2}$. On $(0,n^{-1/2})$ its integrand is at most one, so this part is $O(n^{-1/2})$. On $(n^{-1/2},T)$ we use $t+\delta_n\ge t$ to obtain $A_n(t)^2\le C(1+t^{-2})$. Hence
\[
 \int_{n^{-1/2}}^T\frac{A_n(t)^2}{n+A_n(t)^2}\,\dd t
 \le\frac{C}{n}\int_{n^{-1/2}}^T(1+t^{-2})\,\dd t
 =O(n^{-1/2}).
\]
Inserting these estimates into \eqref{eq:un-H1-estimate} proves \eqref{eq:un-H1-vanish}.
\end{proof}

\subsection{The amplified ray}

For divergence-free $u$, the planar system \eqref{eq:planar-system} reduces to the scalar damped wave equation
\begin{equation}\label{eq:scalar-damped-wave}
 b_{tt}-\Delta b+\sigma b_t+\sigma u\cdot\nabla b=0.
\end{equation}
Indeed, differentiate Faraday's equation, take the curl of Amp\`ere's equation, and use
$\curl(bRu)=-\diver(bu)=-u\cdot\nabla b$. Conversely, suppose \eqref{eq:scalar-damped-wave} holds and Amp\`ere's equation evolves $E$. Define the Faraday residual $r:=b_t+\curl E$. Since
\[
 \curl E_t=-\Delta b-\sigma\curl E+\sigma u\cdot\nabla b,
\]
the scalar equation gives
\[
 r_t=b_{tt}-\Delta b-\sigma\curl E+\sigma u\cdot\nabla b
 =-\sigma(b_t+\curl E)=-\sigma r.
\]
Compatible initial data mean $r(0)=b_t(0)+\curl E(0)=0$, so $r\equiv0$.

Consider the phase \eqref{eq:2d-ray-phase} and the outgoing polarization $E=-Rw\,b$. To identify the first transport equation, insert the formal WKB ansatz $b=q(t)e^{i\Lambda\phi(t,x)}$ into \eqref{eq:scalar-damped-wave}. Since $\phi_t=-1$, $\nabla\phi=w$, and $|w|=1$, the $O(\Lambda^2)$ term is $\Lambda^2(|w|^2-1)q=0$. After removing the exponential factor, the remaining expression is
\[
 i\Lambda\bigl[-2q'+\sigma(u_n\cdot w-1)q\bigr]+q''+\sigma q'.
\]
Setting the $O(\Lambda)$ coefficient to zero along the moving core gives
\begin{equation}\label{eq:transport-amplitude}
 q'(t)=\frac\sigma2\bigl(u_n(t,x_c(t))\cdot w-1\bigr)q(t)
 =\frac\sigma2(A_n(t)-1)q(t).
\end{equation}
Here $u_n(t,x_c(t))=A_n(t)w$ by the exact inner-core value in \cref{lem:moser-core}. Substituting $A_n-1=2/[\sigma(t+\delta_n)]$ from \eqref{eq:An-Ln} yields $q_n'=q_n/(t+\delta_n)$, so $q_n/(t+\delta_n)$ is constant. The terminal normalization $q_n(T)=1$ therefore gives
\begin{equation}\label{eq:qn}
 q_n(t)=\frac{t+\delta_n}{T+\delta_n},
 \qquad
 q_n(0)\longrightarrow0,
 \qquad
 q_n(T)=1.
\end{equation}
The rest of the section turns this transport calculation into exact solutions. The core is fixed first and the carrier frequency is sent to infinity afterward. No constant below is required to be uniform in the core index.

\subsection{An exact packet for a spatially constant core}

For $A\in C^\infty([0,T])$, let $C_A(t)$ denote the lower-order matrix in the planar system with spatially constant coefficient $A(t)w$:
\begin{equation}\label{eq:CA-2d}
 C_A(t)(E,b)=\bigl(-\sigma E-\sigma A(t)bRw,0\bigr).
\end{equation}
With the state ordered as $(E_1,E_2,b)$ and Fourier characters $e^{i\xi\cdot x}$, the Hermitian free Maxwell symbol is
\begin{equation}\label{eq:M2-symbol}
 M(\xi)=
 \begin{pmatrix}
 0&0&-\xi_2\\
 0&0&\xi_1\\
 -\xi_2&\xi_1&0
 \end{pmatrix},
\end{equation}
so the free equation is $\partial_t\widehat Z=-iM(\xi)\widehat Z$. For $\xi\ne0$, put $\omega=\xi/|\xi|$. An orthonormal eigenbasis is
\begin{equation}\label{eq:2d-eigenbasis}
 \begin{aligned}
 r_+(\omega)&=2^{-1/2}(-R\omega,1),&\lambda_+(\xi)&=|\xi|,\\
 r_0(\omega)&=(\omega,0),&\lambda_0(\xi)&=0,\\
 r_-(\omega)&=2^{-1/2}(R\omega,1),&\lambda_-(\xi)&=-|\xi|.
 \end{aligned}
\end{equation}
The three branches are separated by gaps of size $|\xi|$ or $2|\xi|$. A direct calculation gives
\begin{equation}\label{eq:2d-diagonal-coefficient}
 \langle r_+(\omega),C_A(t)r_+(\omega)\rangle
 =\frac\sigma2\bigl(A(t)w\cdot\omega-1\bigr).
\end{equation}
At $\omega=w$, this is exactly the coefficient in \eqref{eq:transport-amplitude}.

\begin{lemma}[Uniform branch decoupling]\label{lem:2d-branch-decoupling}
Let $C\in W^{1,1}(0,T;\mathbb C^{3\times3})$, let $|\xi|\ge1$, and let $z$ solve
\[
 z'=(-iM(\xi)+C(t))z.
\]
For $\mathcal I=\{+,0,-\}$ define
\[
 c_{\alpha\beta}(t,\omega)
 =\langle r_\alpha(\omega),C(t)r_\beta(\omega)\rangle,
 \qquad
 \gamma_\alpha(t,\omega)=\int_0^tc_{\alpha\alpha}(s,\omega)\,\dd s.
\]
Then
\begin{equation}\label{eq:2d-branch-decoupling}
 \sup_{0\le t\le T}
 \left|z(t)-\sum_{\alpha\in\mathcal I}
 e^{-i\lambda_\alpha(\xi)t+\gamma_\alpha(t,\omega)}
 \langle r_\alpha(\omega),z(0)\rangle r_\alpha(\omega)
 \right|
 \le\frac{C_C}{|\xi|}|z(0)|,
\end{equation}
where $C_C$ depends only on $T$, $\norm{C}_{L^1}$, $\norm{C}_{L^\infty}$, and $\norm{C'}_{L^1}$, and is independent of $\omega\in S^1$ and $\xi$.
\end{lemma}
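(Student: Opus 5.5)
Since $\omega=\xi/|\xi|$ is fixed, the frame $\{r_\alpha(\omega)\}$ does not depend on $t$. We therefore expand $z(t)=\sum_\alpha a_\alpha(t)r_\alpha(\omega)$, and the system becomes
\[
 a_\alpha'=-i\lambda_\alpha a_\alpha+\sum_\beta c_{\alpha\beta}(t,\omega)a_\beta .
\]
To strip off the diagonal dynamics, we set $a_\alpha(t)=e^{-i\lambda_\alpha t+\gamma_\alpha(t)}\,b_\alpha(t)$. This gives
\[
 b_\alpha'=\sum_{\beta\ne\alpha}c_{\alpha\beta}(t)\,e^{i(\lambda_\alpha-\lambda_\beta)t}\,e^{\gamma_\beta(t)-\gamma_\alpha(t)}\,b_\beta .
\]
The claim is then equivalent to $|b_\alpha(t)-b_\alpha(0)|\le C|\xi|^{-1}|z(0)|$, because the factors $e^{\pm\gamma}$ are bounded by $e^{\norm{C}_{L^1}}$.

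For the a priori bound, Gronwall applied to the original system gives $|z(t)|\le e^{\norm{C}_{L^1}}|z(0)|$ uniformly in $\xi$, since $-iM$ is skew-Hermitian. It follows that $|b_\beta(t)|\le e^{2\norm{C}_{L^1}}|z(0)|$. All coefficients satisfy $|c_{\alpha\beta}|\le|C(t)|$ by orthonormality, so every constant below is uniform in $\omega\in S^1$.

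The key step is one integration by parts in the oscillatory factor. For $\beta\ne\alpha$, the frequency $\mu=\lambda_\alpha-\lambda_\beta$ satisfies $|\mu|\ge|\xi|$ by the spectral gaps in \eqref{eq:2d-eigenbasis}. Write $e^{i\mu s}=(i\mu)^{-1}\frac{\dd}{\dd s}e^{i\mu s}$ and integrate
\[
 \int_0^t c_{\alpha\beta}(s)\,e^{\gamma_\beta-\gamma_\alpha}(s)\,b_\beta(s)\,e^{i\mu s}\,\dd s
\]
by parts. The boundary terms are bounded by $|\mu|^{-1}\norm{C}_{L^\infty}e^{C\norm{C}_{L^1}}|z(0)|$. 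This is where the $L^\infty$ norm is needed: $W^{1,1}$ embeds into $L^\infty$ anyway, so the point values make sense.

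The interior term contains the derivative of the product $c_{\alpha\beta}\,e^{\gamma_\beta-\gamma_\alpha}\,b_\beta$, which has three pieces:
\begin{itemize}
\item $c_{\alpha\beta}'$, which is controlled by $\norm{C'}_{L^1}$;
\item $(c_{\beta\beta}-c_{\alpha\alpha})\,c_{\alpha\beta}$, which is controlled by $\norm{C}_{L^\infty}\norm{C}_{L^1}$;
\item $b_\beta'$, which by the equation above is bounded by $|C(s)|$ times the a priori bound, hence controlled by $\norm{C}_{L^\infty}\norm{C}_{L^1}$.
\end{itemize}
Each piece is multiplied by bounded factors, so the whole integral is $O(|\mu|^{-1})|z(0)|\le O(|\xi|^{-1})|z(0)|$.

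Summing over the finitely many $\beta\ne\alpha$ and returning to $z$ gives \eqref{eq:2d-branch-decoupling}, with $C_C$ depending only on $T$, $\norm{C}_{L^1}$, $\norm{C}_{L^\infty}$ and $\norm{C'}_{L^1}$.

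The step needing the most care is the $b_\beta'$ term. Integration by parts creates a derivative of the unknown itself, and we must not differentiate the oscillatory factor a second time. Substituting the $b$-equation handles this without any bootstrap, because the a priori bound has already been obtained from the unconjugated system. One also has to check that the gap $|\lambda_\alpha-\lambda_\beta|\ge|\xi|$ holds for all pairs $\alpha\ne\beta$, including $(+,-)$ where it equals $2|\xi|$; this is the only place where $|\xi|\ge1$ and the uniformity in $\omega$ enter.
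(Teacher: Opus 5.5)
Your proof is correct and follows essentially the paper's route. Your $b_\alpha$ are the paper's interaction-picture variables $\zeta_\alpha$, and both arguments integrate by parts against the gap $|\lambda_\alpha-\lambda_\beta|\ge|\xi|$ and use a Gronwall a priori bound. The differences are cosmetic: you integrate by parts once on the product $h_{\alpha\beta}b_\beta$ and substitute $b_\beta'$ from the equation, whereas the paper first bounds the primitive $K_\xi=\int_0^tR_\xi$ and then integrates by parts a second time; and you take the a priori bound from the energy identity of the unconjugated system rather than from Gronwall on the $\zeta$-system.
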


\begin{proof}
Introduce the interaction-picture amplitudes
\[
 a_\alpha(t)=e^{i\lambda_\alpha(\xi)t}
 \langle r_\alpha(\omega),z(t)\rangle,
 \qquad
 \zeta_\alpha(t)=e^{-\gamma_\alpha(t,\omega)}a_\alpha(t).
\]
Then $\zeta'=R_\xi(t)\zeta$, the diagonal entries of $R_\xi$ vanish, and, for $\alpha\ne\beta$,
\begin{equation}\label{eq:Rxi-entry}
 (R_\xi)_{\alpha\beta}(t)
 =e^{i(\lambda_\alpha-\lambda_\beta)t}
 e^{-\gamma_\alpha(t,\omega)+\gamma_\beta(t,\omega)}
 c_{\alpha\beta}(t,\omega).
\end{equation}
Define
\[
 h_{\alpha\beta}(t,\omega)
 :=e^{-\gamma_\alpha(t,\omega)+\gamma_\beta(t,\omega)}c_{\alpha\beta}(t,\omega).
\]
Differentiating gives
\[
 \partial_t h_{\alpha\beta}
 =e^{-\gamma_\alpha+\gamma_\beta}
 \bigl[(-c_{\alpha\alpha}+c_{\beta\beta})c_{\alpha\beta}
       +\partial_t c_{\alpha\beta}\bigr].
\]
The orthonormal eigenvectors in \eqref{eq:2d-eigenbasis} are independent of time, so
\[
 |c_{\alpha\beta}|\le\|C(t)\|,
 \qquad
 |\partial_t c_{\alpha\beta}|\le\|C'(t)\|,
 \qquad
 |e^{-\gamma_\alpha+\gamma_\beta}|\le e^{2\norm{C}_{L^1}}.
\]
These bounds give
\begin{equation}\label{eq:h-bound}
 \sup_{\omega\in S^1}
 \left(\norm{h_{\alpha\beta}(\cdot,\omega)}_{L^\infty}
 +\norm{\partial_th_{\alpha\beta}(\cdot,\omega)}_{L^1}\right)
 \le C_C.
\end{equation}
For $\alpha\ne\beta$, set $\mu_{\alpha\beta}:=\lambda_\alpha(\xi)-\lambda_\beta(\xi)$; then $|\mu_{\alpha\beta}|\ge|\xi|$. Integration by parts gives
\[
 \begin{aligned}
 \int_0^t e^{i\mu_{\alpha\beta}s}h_{\alpha\beta}(s,\omega)\,\dd s
 &=\frac{e^{i\mu_{\alpha\beta}t}h_{\alpha\beta}(t,\omega)
          -h_{\alpha\beta}(0,\omega)}{i\mu_{\alpha\beta}}\\
 &\quad-\frac1{i\mu_{\alpha\beta}}
 \int_0^t e^{i\mu_{\alpha\beta}s}\partial_s h_{\alpha\beta}(s,\omega)\,\dd s.
 \end{aligned}
\]
Applying \eqref{eq:h-bound} to each off-diagonal entry, with the diagonal entries zero, yields
\begin{equation}\label{eq:Kxi-bound}
 \sup_{0\le t\le T}
 \left\|K_\xi(t):=\int_0^tR_\xi(s)\,\dd s\right\|
 \le\frac{C_C}{|\xi|}.
\end{equation}
Moreover, Gronwall's inequality and \eqref{eq:Rxi-entry} give
$\sup_t|\zeta(t)|\le C_C|\zeta(0)|$. Since $K_\xi'=R_\xi$ and $K_\xi(0)=0$, a second integration by parts gives
\[
 \begin{aligned}
 \zeta(t)-\zeta(0)
 &=\int_0^tR_\xi(s)\zeta(s)\,\dd s
 =K_\xi(t)\zeta(t)-\int_0^tK_\xi(s)\zeta'(s)\,\dd s\\
 &=K_\xi(t)\zeta(t)-\int_0^tK_\xi(s)R_\xi(s)\zeta(s)\,\dd s.
 \end{aligned}
\]
Thus \eqref{eq:Rxi-entry}--\eqref{eq:Kxi-bound} imply
$\sup_t|\zeta(t)-\zeta(0)|\le C_C|\xi|^{-1}|\zeta(0)|$. To reconstruct the original variable, use
\[
 \begin{aligned}
 \langle r_\alpha(\omega),z(t)\rangle
 &=e^{-i\lambda_\alpha(\xi)t+\gamma_\alpha(t,\omega)}\zeta_\alpha(t),\\
 z(t)&=\sum_{\alpha\in\mathcal I}
 e^{-i\lambda_\alpha(\xi)t+\gamma_\alpha(t,\omega)}
 \zeta_\alpha(t)r_\alpha(\omega).
 \end{aligned}
\]
Since $\zeta_\alpha(0)=\langle r_\alpha(\omega),z(0)\rangle$, orthonormality gives $|\zeta(0)|=|z(0)|$. The bound $|e^{\gamma_\alpha}|\le e^{\norm{C}_{L^1}}$ shows that the difference in \eqref{eq:2d-branch-decoupling} is at most $e^{\norm{C}_{L^1}}|\zeta(t)-\zeta(0)|$, proving the claim.
\end{proof}

Fix a real $a\in C_c^\infty(B(0,1/4))$ with $\norm{a}_2=1$, and let
$a_\eta(x)=\eta^{-1}a(x/\eta)$, periodized on $\T^2$ for sufficiently small $\eta$.

\begin{lemma}[Polarized constant-core packet]\label{lem:2d-constant-packet}
Let $A\in C^\infty([0,T])$ and let
\begin{equation}\label{eq:q-general-2d}
 q'(t)=\frac\sigma2(A(t)-1)q(t).
\end{equation}
For every fixed sufficiently small $\eta>0$ there are real exact solutions
$Z_{\Lambda,\eta}^A=(E_{\Lambda,\eta}^A,b_{\Lambda,\eta}^A)$ of \eqref{eq:planar-system} with coefficient $A(t)w$, indexed by lattice frequencies $\Lambda\to\infty$, such that
\begin{equation}\label{eq:2d-packet-limit}
 \left\|Z_{\Lambda,\eta}^A
 -q(t)\operatorname{Re}\left
 \{r_+(w)a_\eta(x-x_c(t))e^{i\Lambda\phi(t,x)}\right\}
 \right\|_{C([0,T];L^2_x)}\longrightarrow0.
\end{equation}
Define the exact current by
\[
 J_{\Lambda,\eta}^A
 :=\sigma\bigl(E_{\Lambda,\eta}^A+A(t)b_{\Lambda,\eta}^ARw\bigr).
\]
These currents satisfy
\begin{equation}\label{eq:2d-packet-current-limit}
 J_{\Lambda,\eta}^A
 -\sigma(A-1)b_{\Lambda,\eta}^{\rm lead}Rw
 \longrightarrow0
 \quad\text{in }L^2_{t,x},
\end{equation}
where $b_{\Lambda,\eta}^{\rm lead}$ is the magnetic component of the leading packet in \eqref{eq:2d-packet-limit}.
\end{lemma}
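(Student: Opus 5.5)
We will build the packet by exact Fourier-mode evolution, using that a spatially constant coefficient keeps every Fourier mode decoupled.

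\textbf{Initial datum.} Since the coefficient $A(t)w$ does not depend on $x$, the system \eqref{eq:planar-system} with this coefficient is $\partial_t\widehat Z(k)=(-iM(k)+C_A(t))\widehat Z(k)$ independently for each $k\in\mathbb Z^2$. Here $C_A(t)$ is the matrix \eqref{eq:CA-2d}, which is independent of $k$. We take $\Lambda\in\mathbb N$, so that $\Lambda w=\Lambda e_1$ is a lattice vector, and prescribe
\[
 Z(0)=q(0)\operatorname{Re}\{\Pi_+\,r_+(w)a_\eta(x-x_0)e^{i\Lambda w\cdot(x-x_0)}\}.
\]
Here $\Pi_+$ is the Fourier multiplier projecting mode $k$ onto $r_+(k/|k|)$; it would be harmless to omit it. The exact solution $Z_{\Lambda,\eta}^A$ is then defined mode by mode. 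It is real, because $C_A$ is real and $M(-k)=-\overline{M(k)}$; equivalently, we may solve with complex data and take real parts, which is legitimate since the equation is real-linear.

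\textbf{Convergence to the packet.} Apply \cref{lem:2d-branch-decoupling} on each mode $k$ with $C=C_A$. Its constant $C_{C_A}$ depends only on $A$, so the error in mode $k$ is $O(|k|^{-1})|\widehat Z(0,k)|$. The Fourier mass of $a_\eta e^{i\Lambda w\cdot x}$ concentrates where $|k-\Lambda w|\le K$, with a tail that is small uniformly in $\Lambda$ because $\eta$ is fixed and $a_\eta\in C_c^\infty$. Summing with Plancherel therefore gives a total error $O(\Lambda^{-1})+o_K(1)$. The main term propagates mode $k$ by the factor $e^{-i|k|t+\gamma_+(t,k/|k|)}$. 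For $|k-\Lambda w|\le K$ we use:
\begin{itemize}
\item $|k|=\Lambda+(k-\Lambda w)\cdot w+O(K^2/\Lambda)$, so the phase becomes $-i\Lambda t-i(k-\Lambda w)\cdot w\,t$, which is exactly translation by $tw$ of the envelope together with the carrier $e^{i\Lambda\phi}$;
\item $k/|k|=w+O(K/\Lambda)$, so $r_+(k/|k|)\to r_+(w)$ and $\gamma_+(t,k/|k|)\to\gamma_+(t,w)=\int_0^t\frac\sigma2(A-1)$ by \eqref{eq:2d-diagonal-coefficient}.
\end{itemize}
Hence $e^{\gamma_+(t,w)}q(0)=q(t)$ by \eqref{eq:q-general-2d}, and \eqref{eq:2d-packet-limit} follows uniformly in $t$. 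The conjugate modes near $-\Lambda w$ are handled in the same way, or are absorbed through the real part.

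\textbf{Current.} By \eqref{eq:2d-packet-limit} it suffices to compute $J$ on the leading packet. There $E^{\rm lead}=-Rw\,b^{\rm lead}$, because the $E$-component of $r_+(w)$ is $-Rw$ times its $b$-component. Therefore
\[
 J=\sigma\bigl(-Rw\,b^{\rm lead}+A\,b^{\rm lead}Rw\bigr)+o(1)=\sigma(A-1)b^{\rm lead}Rw+o(1)
\]
in $L^2_{t,x}$, using $C([0,T];L^2)\subset L^2_{t,x}$ and $A\in L^\infty$. This gives \eqref{eq:2d-packet-current-limit}.

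The main obstacle is the phase expansion in the convergence step. The $O(K^2/\Lambda)$ curvature correction to $|k|$ and the replacement of $r_+(k/|k|)$ and $\gamma_+$ by their values at $w$ must be uniform on the truncated Fourier window. This uniformity comes from choosing $K$ first, to make the tail small for fixed $\eta$, and then letting $\Lambda\to\infty$.
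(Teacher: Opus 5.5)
Your proposal is correct and follows essentially the paper's proof. Both arguments evolve the constant-coefficient system exactly mode by mode, apply the branch-decoupling lemma with a constant depending only on $A$, and take a Parseval split in which the envelope window $|\ell|\le K$ is fixed before $\Lambda\to\infty$; the current then follows from $E^{\rm lead}=-Rw\,b^{\rm lead}$. The differences are cosmetic: you project $r_+(w)$ onto $r_+(k/|k|)$ where the paper replaces it, and you give an explicit $O(K^2/\Lambda)$ phase expansion. You should, however, set the mode $k=0$ (i.e.\ $\ell=-\Lambda w$) to zero explicitly, as the paper does, since $\Pi_+$ is undefined there.
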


\begin{proof}
Write the complex leading packet as
\begin{equation}\label{eq:2d-leading-complex-packet}
 Y_{\Lambda,\eta}^A(t,x)
 :=q(t)r_+(w)a_\eta(x-x_c(t))e^{i\Lambda\phi(t,x)}.
\end{equation}
We construct a complex exact solution $\mathcal Z_{\Lambda,\eta}^A$ and set
$Z_{\Lambda,\eta}^A=\operatorname{Re}\mathcal Z_{\Lambda,\eta}^A$.
For $\Lambda w\in\mathbb Z^2$, modulation shifts each envelope mode $\ell$ to
$\xi=\Lambda w+\ell$. The corresponding leading coefficient is
\begin{equation}\label{eq:2d-leading-Fourier}
 \widehat Y_{\Lambda,\eta}^A(t,\Lambda w+\ell)
 =q(t)\widehat a_\eta(\ell)e^{-i(\Lambda w+\ell)\cdot x_0}
 e^{-i(\Lambda+w\cdot\ell)t}r_+(w).
\end{equation}
At $t=0$ replace $r_+(w)$ by $r_+(\xi/|\xi|)$ so that each nonzero initial mode
lies exactly in the outgoing eigenspace; thus prescribe
\begin{equation}\label{eq:2d-packet-initial-Fourier}
 \widehat{\mathcal Z}_{\Lambda,\eta}^A(0,\Lambda w+\ell)
 =q(0)\widehat a_\eta(\ell)e^{-i(\Lambda w+\ell)\cdot x_0}
 r_+\left(\frac{\Lambda w+\ell}{|\Lambda w+\ell|}\right).
\end{equation}
The zero-frequency term, corresponding to $\ell=-\Lambda w$, is set to zero. Its omission tends to zero in $L^2$ because $a_\eta$ is smooth. Every other Fourier coefficient solves
\begin{equation}\label{eq:2d-mode-ode}
 \partial_t\widehat{\mathcal Z}=(-iM(\xi)+C_A(t))\widehat{\mathcal Z}.
\end{equation}
The matrix $C_A$ belongs to $W^{1,1}(0,T)$, so \cref{lem:2d-branch-decoupling} applies to every nonzero mode. The data \eqref{eq:2d-packet-initial-Fourier} have only an outgoing component. Therefore, for every fixed $\ell$, the zero and incoming components tend to zero uniformly on $[0,T]$, while the outgoing component is, up to an error tending to zero,
\begin{equation}\label{eq:2d-mode-asymptotic}
 \begin{aligned}
 &q(0)\widehat a_\eta(\ell)e^{-i(\Lambda w+\ell)\cdot x_0}
 e^{-i|\Lambda w+\ell|t}\\
 &\qquad\times\exp\left[\int_0^t\frac\sigma2
 \bigl(A(s)w\cdot\omega_{\Lambda,\ell}-1\bigr)\,\dd s\right]
 r_+(\omega_{\Lambda,\ell}),
 \end{aligned}
\end{equation}
where
$\omega_{\Lambda,\ell}=(\Lambda w+\ell)/|\Lambda w+\ell|$. We used \eqref{eq:2d-diagonal-coefficient} to identify the outgoing diagonal term.

For every fixed $\ell$,
\begin{equation}\label{eq:2d-carrier-asymptotics}
 \omega_{\Lambda,\ell}\longrightarrow w,
 \qquad
 |\Lambda w+\ell|-\Lambda-w\cdot\ell\longrightarrow0.
\end{equation}
Together with \eqref{eq:q-general-2d}, these limits show that, for each fixed $\ell$,
\begin{equation}\label{eq:2d-mode-comparison}
 \begin{aligned}
 \Delta_{\Lambda,\ell}(t)
 &:=\left|\widehat{\mathcal Z}_{\Lambda,\eta}^A(t,\Lambda w+\ell)
       -\widehat Y_{\Lambda,\eta}^A(t,\Lambda w+\ell)\right|,\\
 \sup_{0\le t\le T}\Delta_{\Lambda,\ell}(t)&\longrightarrow0.
 \end{aligned}
\end{equation}
Indeed, the phase factor inherited from the initial datum has modulus one,
and \eqref{eq:2d-carrier-asymptotics} gives uniform convergence of the remaining
oscillatory factor on $[0,T]$. Multiplying the limiting diagonal exponential
by $q(0)$ gives $q(t)$ by \eqref{eq:q-general-2d}.
The factor $e^{-iw\cdot\ell t}$ in \eqref{eq:2d-leading-Fourier}
translates the envelope to $x_c(t)$.

Since $-iM(\xi)$ is skew-Hermitian, the mode equation gives
\[
 \frac12\frac{\dd}{\dd t}|\widehat{\mathcal Z}(t,\xi)|^2
 =\operatorname{Re}\langle\widehat{\mathcal Z},C_A(t)\widehat{\mathcal Z}\rangle
 \le\norm{C_A(t)}|\widehat{\mathcal Z}(t,\xi)|^2.
\]
Gronwall's lemma yields the uniform majorant
\begin{equation}\label{eq:2d-mode-majorant}
 |\widehat{\mathcal Z}(t,\xi)|
 \le\exp\left(\int_0^T\norm{C_A(s)}\,\dd s\right)
 |\widehat{\mathcal Z}(0,\xi)|.
\end{equation}
Equations \eqref{eq:2d-packet-initial-Fourier} and \eqref{eq:2d-leading-Fourier} bound both types of Fourier coefficients by $C[A,q]|\widehat a_\eta(\ell)|$, uniformly in $\Lambda,t$.
With the Fourier-series convention fixed in the Introduction, Parseval gives
\begin{equation}\label{eq:2d-packet-Parseval}
 \begin{aligned}
 \norm{\mathcal Z_{\Lambda,\eta}^A(t)-Y_{\Lambda,\eta}^A(t)}_2^2
 &=(2\pi)^2\sum_{\ell\in\mathbb Z^2}\Delta_{\Lambda,\ell}(t)^2\\
 &\le(2\pi)^2\sum_{|\ell|\le N}\Delta_{\Lambda,\ell}(t)^2
   +C[A,q]^2\sum_{|\ell|>N}|\widehat a_\eta(\ell)|^2.
 \end{aligned}
\end{equation}
Given $\varepsilon>0$, first choose $N$ so that the second term is less than
$\varepsilon/2$, using square summability of $\widehat a_\eta$.
Then choose $\Lambda>2N$ sufficiently large that the finite sum, including its
fixed Parseval factor, is less than $\varepsilon/2$ uniformly in $t$, by
\eqref{eq:2d-mode-comparison}. The omitted zero mode corresponds to
$\ell=-\Lambda w$ and belongs to the same tail; the exact coefficient there
stays zero under \eqref{eq:2d-mode-ode}.
Thus $\mathcal Z_{\Lambda,\eta}^A-Y_{\Lambda,\eta}^A\to0$ in
$C([0,T];L^2_x)$. Taking real parts proves \eqref{eq:2d-packet-limit}.

Write $\operatorname{Re}Y_{\Lambda,\eta}^A
=(E_{\Lambda,\eta}^{\rm lead},b_{\Lambda,\eta}^{\rm lead})$.
Its electric component satisfies
$E_{\Lambda,\eta}^{\rm lead}=-Rw\,b_{\Lambda,\eta}^{\rm lead}$; hence define
$J_{\Lambda,\eta}^{\rm lead}:=\sigma(A-1)b_{\Lambda,\eta}^{\rm lead}Rw$. Since $A\in L^2(0,T)$ is fixed in this lemma, the $C_tL^2_x$ convergence of the fields gives
\[
 \norm{J_{\Lambda,\eta}^A-J_{\Lambda,\eta}^{\rm lead}}_{L^2_{t,x}}
 \le\sigma\sqrt T\norm{E_{\Lambda,\eta}^A-E_{\Lambda,\eta}^{\rm lead}}_{C_tL^2_x}
 +\sigma\norm{A}_{L^2_t}\norm{b_{\Lambda,\eta}^A-b_{\Lambda,\eta}^{\rm lead}}_{C_tL^2_x},
\]
which tends to zero and proves \eqref{eq:2d-packet-current-limit}.
\end{proof}

\subsection{Diagonal localization and correction}

Let
\begin{equation}\label{eq:rho-n}
 \rho_n=c\exp\left(-\max_{0\le t\le T}L_n(t)\right)=ce^{-L_n(0)}.
\end{equation}
Thus $u_n(t)=A_n(t)w$ on $B(x_c(t),\rho_n)$ for every $t$. Choose $0<\eta_n<\rho_n/4$ with $\eta_n\to0$, and apply \cref{lem:2d-constant-packet} with $A=A_n$, $q=q_n$, and $\eta=\eta_n$. Define the inherited packets and currents by
\begin{equation}\label{eq:2d-specialized-packets}
 \begin{aligned}
 Z_{n,\Lambda}&:=Z_{\Lambda,\eta_n}^{A_n}
                 =(E_{n,\Lambda}^Z,b_{n,\Lambda}^Z),\\
 Y_{n,\Lambda}&:=\operatorname{Re}Y_{\Lambda,\eta_n}^{A_n}
   =q_n(t)r_+(w)a_{\eta_n}(x-x_c(t))\cos(\Lambda\phi(t,x))\\
   &\phantom{:}=(E_{n,\Lambda}^{\rm lead},b_{n,\Lambda}^{\rm lead}),\\
 J_{n,\Lambda}^Z&:=J_{\Lambda,\eta_n}^{A_n}
   =\sigma(E_{n,\Lambda}^Z+A_nb_{n,\Lambda}^ZRw),\\
 J_{n,\Lambda}^{\rm lead}&:=\sigma(A_n-1)b_{n,\Lambda}^{\rm lead}Rw.
 \end{aligned}
\end{equation}
For every fixed $n$, the two conclusions of \cref{lem:2d-constant-packet} give
\begin{equation}\label{eq:2d-specialized-convergences}
 \begin{aligned}
 \norm{Z_{n,\Lambda}-Y_{n,\Lambda}}_{C_tL^2_x}&\longrightarrow0,\\
 \norm{J_{n,\Lambda}^Z-J_{n,\Lambda}^{\rm lead}}_{L^2_{t,x}}&\longrightarrow0
 \qquad(\Lambda\to\infty).
 \end{aligned}
\end{equation}

The order of choices is essential. First $n$ is fixed, which fixes $A_n$, the logarithmic core, and the possibly very small radius $\rho_n$. Next $\eta_n$ is fixed inside that radius. Only after these choices do we let the lattice carrier $\Lambda\to\infty$. No estimate from the packet lemma is required to be uniform in $n$ or $\eta_n$. Since
$\supp a_{\eta_n}(\cdot-x_c(t))\subset B(x_c(t),\eta_n/4)$, the leading packet is supported strictly inside the region on which $u_n=A_nw$ for every $t\in[0,T]$.

For later use set
\begin{equation}\label{eq:weighted-diagonal-ledger}
 \begin{aligned}
 U_n&=1+\norm{u_n}_{L^2_tL^\infty_x},\\
 H_n&=1+\norm{A_n}_{L^1(0,T)}+\norm{A_n}_{L^2(0,T)},\\
 G_n&=\exp\left(C\int_0^T(1+\norm{u_n(t)}_\infty)\,\dd t\right).
 \end{aligned}
\end{equation}
These quantities may diverge extremely fast. For each fixed $n$, however, they are finite. We may therefore choose a lattice frequency $\Lambda_n$ so large that
\begin{equation}\label{eq:diag-field-choice}
 \norm{Z_{n,\Lambda_n}-Y_{n,\Lambda_n}}_{C_tL^2_x}
 \le\frac1{nG_nU_nH_n},
\end{equation}
\begin{equation}\label{eq:diag-current-choice}
 \norm{J^Z_{n,\Lambda_n}-J^{\rm lead}_{n,\Lambda_n}}_{L^2_{t,x}}
 \le\frac1{nG_nU_n},
\end{equation}
and
\begin{equation}\label{eq:diag-oscillation-choice}
 \Lambda_n\eta_n\ge n.
\end{equation}
For fixed $n$, \eqref{eq:2d-specialized-convergences} places both errors below
the displayed positive thresholds for all sufficiently large lattice carriers.
Increasing the same carrier if necessary also gives \eqref{eq:diag-oscillation-choice},
and hence $\Lambda_n\eta_n\to\infty$. This uses no quantitative convergence rate. Write $Z_n=Z_{n,\Lambda_n}$, $Y_n=Y_{n,\Lambda_n}$,
$J_n^Z=J_{n,\Lambda_n}^Z$, and $J_n^{\rm lead}=J_{n,\Lambda_n}^{\rm lead}$,
and abbreviate the packet components in the same way.

Since $Y_n$ is supported inside the exact constant core,
$(u_n-A_nw)b_n^{\rm lead}=0$. Thus the coefficient mismatch acts only on
$Z_n-Y_n$, and \eqref{eq:moser-Linfty} and \eqref{eq:diag-field-choice} imply
\[
 (u_n-A_nw)b_n^Z=(u_n-A_nw)(b_n^Z-b_n^{\rm lead}),
 \qquad
 \norm{u_n-A_nw}_{L^p_tL^\infty_x}\le C\norm{A_n}_{L^p_t}
 \quad(p=1,2).
\]
The definition of $H_n$ therefore gives
\begin{equation}\label{eq:coefficient-mismatch}
 \norm{(u_n-A_nw)b_n^Z}_{L^1_tL^2_x}
 \le\frac{C}{nG_nU_n},
 \qquad
 \norm{(u_n-A_nw)b_n^Z}_{L^2_{t,x}}
 \le\frac{C}{nG_nU_n}.
\end{equation}
Indeed, the $L^p_tL^2_x$ norm is bounded by
$C\norm{A_n}_{L^p_t}\norm{Z_n-Y_n}_{C_tL^2_x}$ for $p=1,2$.

Let $X_n=(E_n,b_n)$ be the exact solution for the actual coefficient $u_n$ with initial datum $X_n(0)=Z_n(0)$. The difference $D_n=X_n-Z_n$ has zero initial datum and forcing
\begin{equation}\label{eq:2d-forcing}
 F_n=\bigl(-\sigma b_n^ZR(u_n-A_nw),0\bigr).
\end{equation}
Taking the $L^2$ energy scalar product of the difference equation and using boundedness of the zero-order coefficient gives, first for smooth approximations and then by density,
\[
 \frac{\dd}{\dd t}\norm{D_n(t)}_2
 \le C(1+\norm{u_n(t)}_\infty)\norm{D_n(t)}_2+\norm{F_n(t)}_2.
\]
Gronwall's lemma and \eqref{eq:coefficient-mismatch} give
\begin{equation}\label{eq:2d-field-correction}
 \norm{D_n}_{C_tL^2_x}
 \le G_n\norm{F_n}_{L^1_tL^2_x}
 \le\frac{C}{nU_n}.
\end{equation}
Consequently,
\begin{equation}\label{eq:2d-velocity-field-correction}
 \norm{u_n(b_n-b_n^Z)}_{L^2_{t,x}}\le\frac{C}{n}.
\end{equation}
If $j_n$ is the current of $X_n$ and $J_n^Z$ that of the constant-core solution, then
\begin{equation}\label{eq:2d-current-difference}
 j_n-J_n^Z
 =\sigma\bigl(E_n-E_n^Z+(b_n-b_n^Z)Ru_n+b_n^ZR(u_n-A_nw)\bigr).
\end{equation}
Equations \eqref{eq:coefficient-mismatch}--\eqref{eq:2d-current-difference} show that
\begin{equation}\label{eq:2d-current-correction}
 \norm{j_n-J_n^Z}_{L^2_{t,x}}\longrightarrow0.
\end{equation}
More precisely, the electric-field difference is $O(n^{-1}U_n^{-1})$ in $L^2_{t,x}$ by \eqref{eq:2d-field-correction}; the term $u_n(b_n-b_n^Z)$ is $O(n^{-1})$ by \eqref{eq:2d-velocity-field-correction}; and the coefficient-mismatch term tends to zero by \eqref{eq:coefficient-mismatch}. In this bookkeeping, $H_n$ absorbs multiplication by $A_n$, $G_n$ cancels
the Gronwall loss, and $U_n$ cancels multiplication by $u_n$. The later choice
of $\Lambda_n$ absorbs all three losses, without a packet estimate uniform in $n$.

\subsection{The Lorentz ray defect}

We use the following periodization convention. For $0<\eta<\pi$, expressions of
the form
\[
a_\eta^2(x-x_c(t))\cos^2(\Lambda\phi(t,x))
\]
are first defined as compactly supported functions on $\R^d$ and then
periodized on $\T^d$. When $\Lambda w\in\mathbb Z^d$, as in every application
below, this agrees with the pointwise product of the periodized envelope and
the periodic oscillatory factor.

\begin{lemma}[Oscillatory concentration on the ray]\label{lem:ray-concentration}
Let $w\in S^{d-1}$ and $x_0\in\T^d$, and let
$a\in C_c^\infty(B(0,1/4)\subset\R^d)$ be real with $\norm{a}_2=1$. Set
$a_\eta(x)=\eta^{-d/2}a(x/\eta)$ and put
$x_c(t)=x_0+tw$, $\phi(t,x)=w\cdot(x-x_0)-t$.
Suppose $\eta_n\to0$ and $\Lambda_n\eta_n\to\infty$. Then, as nonnegative
Radon measures on $[0,T]\times\T^d$, 
\begin{equation}\label{eq:ray-measure}
 a_{\eta_n}^2(x-x_c(t))\cos^2(\Lambda_n\phi(t,x))\,\dd x\,\dd t
 \weakstar\frac12\,\dd t\,\delta_{x=x_c(t)}.
\end{equation}
In particular, for every $t$,
\begin{equation}\label{eq:ray-spatial-mass}
 \int_{\T^d}a_{\eta_n}^2(x-x_c(t))\cos^2(\Lambda_n\phi(t,x))\,\dd x
 \longrightarrow\frac12,
\end{equation}
and the convergence is uniform in $t$.
\end{lemma}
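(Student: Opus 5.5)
We prove the uniform spatial statement \eqref{eq:ray-spatial-mass} first and then deduce \eqref{eq:ray-measure} by testing against continuous functions. The basic tool is the identity $\cos^2\theta=\tfrac12+\tfrac12\cos 2\theta$, which splits the density into a nonoscillatory part $\tfrac12 a_{\eta_n}^2(x-x_c(t))$ and an oscillatory part $\tfrac12 a_{\eta_n}^2(x-x_c(t))\cos(2\Lambda_n\phi(t,x))$. Since $a$ is supported in $B(0,1/4)$ and $\eta_n\to0$, the periodization is a single translate for large $n$. The nonoscillatory part then has spatial mass exactly $\tfrac12\norm{a}_2^2=\tfrac12$ for every $t$.

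For the oscillatory part, substitute $x=x_c(t)+\eta_n y$. Because $\phi(t,x_c(t)+\eta_n y)=\eta_n\, w\cdot y$, the spatial integral becomes
\[
 \tfrac12\int_{\R^d}a^2(y)\cos\bigl(2\Lambda_n\eta_n\, w\cdot y\bigr)\,\dd y
 =\tfrac12\operatorname{Re}\,\widehat{a^2}\bigl(-2\Lambda_n\eta_n w\bigr)
\]
(up to the Fourier normalization). This quantity is independent of $t$. Since $a^2\in C_c^\infty$, its Fourier transform decays faster than any power, so the quantity tends to zero because $\Lambda_n\eta_n\to\infty$. The Riemann--Lebesgue lemma alone would also suffice. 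This proves \eqref{eq:ray-spatial-mass}, and the convergence is uniform in $t$ trivially, since the expression does not depend on $t$ at all.

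For the measure statement, take $\Psi\in C([0,T]\times\T^d)$. We treat the two parts separately.
\begin{enumerate}[label=\textup{(\alph*)},leftmargin=2.4em]
\item For the nonoscillatory part, the change of variables gives $\tfrac12\int_0^T\int a^2(y)\Psi(t,x_c(t)+\eta_n y)\,\dd y\,\dd t$. Uniform continuity of $\Psi$ on the compact set and $|y|\le1/4$ show that this converges to $\tfrac12\int_0^T\Psi(t,x_c(t))\,\dd t$.
\item For the oscillatory part, replace $\Psi(t,x_c(t)+\eta_n y)$ by $\Psi(t,x_c(t))$. The error is at most $\tfrac12\,\omega_\Psi(\eta_n/4)\int_0^T\int a^2\to0$, where $\omega_\Psi$ is the modulus of continuity. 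What remains is $\int_0^T\Psi(t,x_c(t))\,\dd t$ times the vanishing oscillatory mass computed above.
\end{enumerate}
Together, (a) and (b) give \eqref{eq:ray-measure}. Nonnegativity of the measures is evident, and their total mass $\int_0^T\tfrac12\,\dd t$ plus $o(1)$ is bounded, so testing against continuous functions is exactly weak-$*$ convergence of Radon measures on the compact space.

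The only step requiring care is the periodization and change-of-variables bookkeeping. Once $\eta_n<\pi$ and the support is small, the periodized function restricted to a fundamental cell around $x_c(t)$ coincides with the $\R^d$ function. The identity $\phi(t,x_c(t)+\eta_n y)=\eta_n w\cdot y$ is exact, and it is what makes the oscillatory estimate $t$-independent. No lattice condition on $\Lambda_n w$ is needed, because the computation is performed on $\R^d$ before periodizing, in line with the stated periodization convention.
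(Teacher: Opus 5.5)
Your proof is correct and follows essentially the same route as the paper's proof. Both use the rescaling $x=x_c(t)+\eta_n y$ with the exact identity $\phi(t,x_c(t)+\eta_n y)=\eta_n w\cdot y$, uniform continuity of $\Psi$ on the shrinking support, and $\cos^2\theta=\tfrac12(1+\cos 2\theta)$ with Riemann--Lebesgue, with uniformity in $t$ coming from the $t$-independence of the rescaled integral; proving the spatial mass first and spelling out the periodization bookkeeping are only cosmetic differences.
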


\begin{proof}
Let $\Psi\in C([0,T]\times\T^d)$. On the support of the packet write
$x=x_c(t)+\eta_ny$. Since $x_c(t)=x_0+tw$, $|w|=1$, and
$\phi(t,x)=w\cdot(x-x_0)-t$, one has
$\phi(t,x_c(t)+\eta_ny)=\eta_nw\cdot y$. Consequently the pairing in \eqref{eq:ray-measure} equals
\[
 \int_0^T\int_{\R^d}
 a(y)^2\cos^2(\Lambda_n\eta_nw\cdot y)
 \Psi(t,x_c(t)+\eta_ny)\,\dd y\,\dd t.
\]
The test function converges uniformly on the compact support of $a$ to
$\Psi(t,x_c(t))$. The identity $\cos^2\theta=(1+\cos2\theta)/2$ and the Riemann--Lebesgue lemma show that the remaining spatial integral converges to
$\frac12\int a^2=1/2$. This proves the measure convergence. Taking a test independent of $x$ gives \eqref{eq:ray-spatial-mass}; the scaled integral itself is independent of $t$, so the convergence is uniform.
\end{proof}

Applying \cref{lem:ray-concentration} with $d=2$ to the leading packets gives
\begin{equation}\label{eq:2d-leading-current}
 J_n^{\rm lead}=\sigma(A_n-1)b_n^{\rm lead}Rw,
 \qquad
 \norm{J_n^{\rm lead}(t)}_2^2
 =\frac{1+o(1)}{(T+\delta_n)^2},
\end{equation}
uniformly in $t$. Indeed,
\[
 b_n^{\rm lead}
 =2^{-1/2}q_n(t)a_{\eta_n}(x-x_c(t))
 \cos(\Lambda_n\phi(t,x)),
\]
and $\sigma(A_n-1)q_n=2/(T+\delta_n)$. Thus the current remains uniformly nontrivial even though the coefficient cost vanishes. Since $R^2=-I$,
\begin{equation}\label{eq:2d-leading-force}
 b_n^{\rm lead}RJ_n^{\rm lead}
 =-\sigma(A_n-1)(b_n^{\rm lead})^2w.
\end{equation}
Moreover,
\begin{equation}\label{eq:2d-force-coefficient}
 \sigma(A_n-1)q_n^2
 =\frac{2(t+\delta_n)}{(T+\delta_n)^2}
 \longrightarrow\frac{2t}{T^2}.
\end{equation}
By \cref{lem:ray-concentration},
\begin{equation}\label{eq:2d-ray-defect}
 b_n^{\rm lead}RJ_n^{\rm lead}
 \weakstar-\frac{t}{2T^2}w\,\dd t\,\delta_{x=x_c(t)}.
\end{equation}

\begin{proof}[Proof of \cref{thm:main-endpoint}, part~\textup{(i)}]

The coefficient convergence is \cref{prop:2d-critical-cost}. By
\cref{lem:ray-concentration} together with
\eqref{eq:2d-packet-limit}, \eqref{eq:diag-field-choice},
\eqref{eq:diag-oscillation-choice}, and \eqref{eq:qn},
$\norm{Z_n(0)}_2\to0$ while
$\liminf_n\norm{Z_n(T)}_2>0$.
Since $X_n(0)=Z_n(0)$ and
\eqref{eq:2d-field-correction} tends to zero, the initial and terminal assertions follow. Equations \eqref{eq:2d-leading-current},
\eqref{eq:diag-current-choice}, and \eqref{eq:2d-current-correction} give the current bound.

For the first replacement, from the actual coefficient to the constant core, a smooth test vector $\Psi$ gives
\[
 \begin{aligned}
 \left|\int(b_nRj_n-b_n^ZRJ_n^Z)\cdot\Psi\right|
 &\le C_\Psi\Bigl(
 \norm{b_n-b_n^Z}_{L^\infty_tL^2_x}\norm{j_n}_{L^2_{t,x}}\\
 &\qquad+\norm{b_n^Z}_{L^\infty_tL^2_x}
 \norm{j_n-J_n^Z}_{L^2_{t,x}}\Bigr)\longrightarrow0.
 \end{aligned}
\]
The identical strong--strong estimate, now using
\eqref{eq:diag-field-choice} and \eqref{eq:diag-current-choice}, replaces
$(b_n^Z,J_n^Z)$ by $(b_n^{\rm lead},J_n^{\rm lead})$. Both replacement errors tend to zero in distributions. The explicit calculation \eqref{eq:2d-ray-defect} therefore gives the claimed ray-supported limit.

Finally, the packet bounds and \eqref{eq:2d-field-correction} give
\[
 \sup_n\norm{b_n}_{L^\infty_tL^2_x}<\infty,
 \qquad
 \sup_n\norm{j_n}_{L^2_{t,x}}<\infty.
\]
Hence
\[
 \sup_n\norm{b_nRj_n}_{L^1_{t,x}}
 \le\sqrt T\sup_n\norm{b_n}_{L^\infty_tL^2_x}
 \sup_n\norm{j_n}_{L^2_{t,x}}<\infty.
\]
The distributional convergence extends by density to weak-* convergence in the space of vector-valued Radon measures: continuous test fields are approximated uniformly by smooth ones, and the displayed $L^1$ bound controls the approximation error uniformly in $n$.
\end{proof}

\subsection{Two planar fixed-data consequences}\label{subsec:2d-fixed-data}

We record two consequences of the preceding 2D construction. The second does not retain the uniformly bounded field/current package of \cref{thm:main-endpoint}.

\begin{corollary}[Instability around a fixed nonzero electric trajectory]\label{cor:fixed-background}
Let $E_*\in\R^2\setminus\{0\}$ be constant and put
\begin{equation}\label{eq:fixed-background}
 \overline X(t)=(e^{-\sigma t}E_*,0).
\end{equation}
For the same coefficients $u_n$ as in part~\textup{(i)} of \cref{thm:main-endpoint}, there are exact solutions $\widehat X_n$ such that
\begin{equation}\label{eq:fixed-background-instability}
 \widehat X_n(0)\longrightarrow(E_*,0)\quad\text{in }\Hh_2,
 \qquad
 \liminf_{n\to\infty}
 \norm{\widehat X_n(T)-\overline X(T)}_{\Hh_2}>0.
\end{equation}
Their currents are bounded in $L^2_{t,x}$, and their Lorentz forces converge to the same defect \eqref{eq:2d-ray-defect}.
\end{corollary}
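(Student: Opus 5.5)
The plan is to exploit linearity of the Maxwell--Ohm system in $X$ for a fixed coefficient, and to superpose the background on the solutions $X_n=(E_n,b_n)$ built in the proof of part~(i) of \cref{thm:main-endpoint}. First check that $\overline X$ in \eqref{eq:fixed-background} is an exact solution of \eqref{eq:planar-system} (with $\eps=1$) for \emph{every} coefficient $u_n$. Since $\overline b=0$, the Ohmic coupling $\overline bRu_n$ vanishes and $R\nabla\overline b=0$. Hence Amp\`ere's law reduces to $\partial_t\overline E=-\sigma\overline E$, which holds because $\overline E=e^{-\sigma t}E_*$. Faraday's law holds because $\curl\overline E=0$ for a spatially constant field. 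The background current is $\overline j=\sigma e^{-\sigma t}E_*$.

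Set $\widehat X_n:=\overline X+X_n$. By linearity it is the exact energy solution with coefficient $u_n$ and datum $(E_*,0)+X_n(0)$. Its current is $\widehat j_n=\overline j+j_n$, because $j=\sigma(E+bRu_n)$ is linear in $(E,b)$ and $\overline b=0$. The estimates already proved give three consequences:
\begin{itemize}[leftmargin=2em]
\item since $X_n(0)\to0$ in $\Hh_2$, we get $\widehat X_n(0)\to(E_*,0)$;
\item since $\widehat X_n(T)-\overline X(T)=X_n(T)$, the liminf is positive by part~(i);
\item since $\overline j$ is a fixed $L^2_{t,x}$ function and $j_n$ is bounded in $L^2_{t,x}$, the currents $\widehat j_n$ are bounded.
\end{itemize}

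For the Lorentz force, $\widehat b_n=b_n$, so
\[
 \widehat b_nR\widehat j_n=b_nRj_n+\sigma e^{-\sigma t}\,b_n\,RE_*.
\]
The first term converges to the ray defect \eqref{eq:2d-ray-defect} by part~(i). It remains to show $b_n\weak0$, at least in distributions, which is the only genuinely new step.

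To prove $b_n\weak0$, replace $b_n$ by $b_n^{\rm lead}$. The error is at most $\norm{D_n}_{C_tL^2_x}+\norm{Z_n-Y_n}_{C_tL^2_x}\to0$ by \eqref{eq:2d-field-correction} and \eqref{eq:diag-field-choice}. The leading packet satisfies $\norm{b_n^{\rm lead}(t)}_2\le C$ and is supported in $B(x_c(t),\eta_n/4)$. So for a continuous test function $\Psi$,
\[
 \left|\int b_n^{\rm lead}\Psi\right|\le C\int_0^T\norm{\Psi(t)}_{L^2(B(x_c(t),\eta_n))}\,\dd t\le C\eta_n\norm{\Psi}_\infty\to0.
\]
Alternatively, the oscillation $\Lambda_n\eta_n\to\infty$ together with Riemann--Lebesgue gives the same conclusion. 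Hence the cross term tends to zero in distributions.

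Finally, the uniform $L^1_{t,x}$ bound on $\widehat b_nR\widehat j_n$ follows from the $L^\infty_tL^2_x$ bound on $b_n$ and the $L^2_{t,x}$ bound on $\widehat j_n$. As in the proof of part~(i), this upgrades distributional convergence to weak-$*$ convergence of Radon measures. I expect no serious obstacle; the only point needing care is the weak vanishing of $b_n$, handled by the support-shrinking estimate above.
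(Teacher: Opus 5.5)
Your proposal is correct and follows the paper's proof essentially step by step. You superpose the exact background $\overline X$ on the solutions $X_n$ of part~(i), use $\widehat j_n=j_n+\sigma e^{-\sigma t}E_*$ and $\widehat b_n=b_n$, and kill the cross term $\sigma e^{-\sigma t}b_nRE_*$ by replacing $b_n$ with $b_n^{\rm lead}$ and exploiting the shrinking support $B(x_c(t),\eta_n/4)$. The only cosmetic difference is that the paper phrases this last step as strong convergence $b_n\to0$ in $L^1_{t,x}$, via $\norm{b_n^{\rm lead}(t)}_{L^1}\le C\eta_nq_n(t)$, which is exactly what your Cauchy--Schwarz bound on the small ball yields.
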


\begin{proof}
Because the magnetic component of $\overline X$ vanishes and its electric component is spatially constant, \eqref{eq:fixed-background} solves the planar Maxwell--Ohm system for every coefficient $u_n$. By linearity,
$\widehat X_n=\overline X+X_n$ is therefore an exact solution, and
\eqref{eq:fixed-background-instability} follows from part~\textup{(i)} of \cref{thm:main-endpoint}. Its current is
$\widehat j_n=j_n+\sigma e^{-\sigma t}E_*$, whereas its magnetic field is $b_n$. Hence
\begin{equation}\label{eq:fixed-background-force}
 b_nR\widehat j_n=b_nRj_n+\sigma e^{-\sigma t}b_nRE_*.
\end{equation}
The leading magnetic packet has spatial $L^1$ norm bounded by
$C\eta_nq_n(t)$, while \eqref{eq:diag-field-choice} and
\eqref{eq:2d-field-correction} imply
$\norm{b_n-b_n^{\rm lead}}_{L^1_{t,x}}\to0$. Since $\eta_n\to0$ and
$0\le q_n\le1$, it follows that $b_n\to0$ in $L^1_{t,x}$. The second term in \eqref{eq:fixed-background-force} therefore tends to zero in $L^1$, while the first has the limit \eqref{eq:2d-ray-defect}. The asserted current bound is immediate.
\end{proof}

Let $\mathcal U_n(T)\in\mathcal L(\Hh_2)$ be the terminal evolution operator for the coefficient $u_n$, and let $\mathcal U_0(T)$ be the operator for the zero coefficient.

\begin{proposition}[Fixed-datum norm inflation]\label{prop:fixed-datum}
One has
\begin{equation}\label{eq:operator-norm-inflation}
 \norm{\mathcal U_n(T)-\mathcal U_0(T)}_{\mathcal L(\Hh_2)}
 \longrightarrow\infty.
\end{equation}
Consequently, there exists one fixed nonzero initial state $X_*\in\Hh_2$ such that
\begin{equation}\label{eq:fixed-datum-inflation}
 \sup_n\norm{\mathcal U_n(T)X_*-\mathcal U_0(T)X_*}_{\Hh_2}=\infty.
\end{equation}
In particular, for this $X_*$ the map
\[
u\longmapsto\mathcal U_u(T)X_*
\]
from smooth coefficients equipped with the $L^2_tH^1_x$ topology into
$\Hh_2$ is discontinuous at $u=0$.
\end{proposition}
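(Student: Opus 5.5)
The plan is to obtain \eqref{eq:operator-norm-inflation} by rescaling the part~\textup{(i)} construction, and then derive \eqref{eq:fixed-datum-inflation} from the uniform boundedness principle (Banach--Steinhaus). Set $K_n=\mathcal U_n(T)-\mathcal U_0(T)$. The solutions $X_n$ of part~\textup{(i)} satisfy $\norm{X_n(0)}_{\Hh_2}\to0$ and $\liminf_n\norm{X_n(T)}_{\Hh_2}\ge c_0>0$. The free damped semigroup is a contraction, and we will check below that $\mathcal U_0(T)X_n(0)\to0$. Then
\[
 \frac{\norm{K_nX_n(0)}_{\Hh_2}}{\norm{X_n(0)}_{\Hh_2}}
 \ge\frac{\norm{X_n(T)}_{\Hh_2}-\norm{X_n(0)}_{\Hh_2}}{\norm{X_n(0)}_{\Hh_2}}
 \longrightarrow\infty .
\]
We also need $X_n(0)\ne0$. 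This holds because $\norm{X_n(0)}_{\Hh_2}$ is comparable to $q_n(0)>0$ up to errors we choose smaller; if necessary, strengthen \eqref{eq:diag-field-choice} so that the error is at most $q_n(0)/n$, which is allowed since $\Lambda_n$ is chosen last.

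The operator-norm lower bound only requires the ratio $\norm{X_n(T)}/\norm{X_n(0)}$ to diverge. Here $\norm{X_n(0)}\approx 2^{-1/2}\cdot 2^{-1/2}q_n(0)\cdot$const, with $q_n(0)=\delta_n/(T+\delta_n)\to0$. We must ensure the approximation errors are $o(q_n(0))$. Concretely, replace the threshold $1/(nG_nU_nH_n)$ in \eqref{eq:diag-field-choice} by $q_n(0)/(nG_nU_nH_n)$; the same choice also makes $\norm{D_n}_{C_tL^2_x}=o(q_n(0))$ in \eqref{eq:2d-field-correction}. Every other step of part~\textup{(i)} is unaffected by this. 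As a result, $\norm{X_n(0)}_{\Hh_2}=q_n(0)(c+o(1))$ with $c>0$ from \eqref{eq:ray-spatial-mass}, while $\norm{X_n(T)}\ge c_0$. This gives \eqref{eq:operator-norm-inflation}, with $\norm{K_n}\gtrsim 1/q_n(0)\sim e^{n}$.

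With \eqref{eq:operator-norm-inflation} established, the family $\{K_n\}\subset\mathcal L(\Hh_2)$ is not uniformly bounded. Since $\Hh_2$ is a Banach space, the contrapositive of the uniform boundedness principle yields a point $X_*$ with $\sup_n\norm{K_nX_*}=\infty$, which is \eqref{eq:fixed-datum-inflation}. Necessarily $X_*\neq0$. For the discontinuity claim, \cref{prop:2d-critical-cost} gives $u_n\to0$ in $L^2_tH^1_x$, and each $u_n$ is smooth. If $u\mapsto\mathcal U_u(T)X_*$ were continuous at $0$, then $\mathcal U_n(T)X_*\to\mathcal U_0(T)X_*$, so $\norm{K_nX_*}$ would be bounded. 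This contradicts \eqref{eq:fixed-datum-inflation}.

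The main obstacle is the quantitative bookkeeping in the first step. We must guarantee that all diagonal errors are small relative to $q_n(0)$, rather than merely tending to zero, so that the initial norm is genuinely comparable to $q_n(0)$. This is the reason for strengthening the choice of $\Lambda_n$. The leading initial mass itself is exact up to $o(1)$ relative error by \cref{lem:ray-concentration}, because the scaled integral does not depend on $t$ or on the amplitude.
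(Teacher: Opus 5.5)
Your proof is correct and follows the paper's argument. Like the paper, you test $\mathcal U_n(T)-\mathcal U_0(T)$ on $h_n=X_n(0)$ from part~(i), use the contraction $\mathcal U_0(T)$ to keep the numerator at least $c_0-o(1)$ while $\norm{h_n}_{\Hh_2}\to0$, and then apply the uniform boundedness principle; the strengthened choice of $\Lambda_n$ forcing all errors to be $o(q_n(0))$ is unnecessary, because the ratio diverges regardless of how fast $\norm{X_n(0)}_{\Hh_2}\to0$ and $X_n(0)\neq0$ follows from $X_n(T)=\mathcal U_n(T)X_n(0)\neq0$, so that bookkeeping only adds the explicit rate $\norm{\mathcal U_n(T)-\mathcal U_0(T)}\gtrsim e^n$.
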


\begin{proof}
Set $h_n=X_n(0)$ for the solutions in part~\textup{(i)} of
\cref{thm:main-endpoint}. Then $\norm{h_n}_{\Hh_2}\to0$, whereas, after discarding finitely many indices,
$\norm{\mathcal U_n(T)h_n}_{\Hh_2}\ge c>0$. The zero-coefficient evolution is bounded on $\Hh_2$, so
$\norm{\mathcal U_0(T)h_n}_{\Hh_2}\to0$. Therefore,
\[
 \norm{(\mathcal U_n(T)-\mathcal U_0(T))h_n}_{\Hh_2}
 \ge\frac c2
\]
for all sufficiently large $n$, and hence
\[
 \norm{\mathcal U_n(T)-\mathcal U_0(T)}_{\mathcal L(\Hh_2)}
 \ge\frac{c}{2\norm{h_n}_{\Hh_2}}\longrightarrow\infty.
\]

The uniform boundedness principle applied to the family of bounded linear
operators $\{\mathcal U_n(T)-\mathcal U_0(T)\}_n$ now gives an
$X_*\in\Hh_2$ satisfying \eqref{eq:fixed-datum-inflation}.
Such an $X_*$ is necessarily nonzero.
Since $u_n\to0$ in $L^2(0,T;H^1(\T^2))$ by
\cref{prop:2d-critical-cost}, \eqref{eq:fixed-datum-inflation}
proves the final discontinuity assertion.
\end{proof}

\begin{remark}[Scope of the fixed-data consequences]
The fixed-trajectory construction is explicit and retains bounded currents and the Lorentz defect, but its initial perturbation still depends on $n$. The fixed-datum statement uses one identical energy datum, but it is nonconstructive and gives an unbounded terminal sequence rather than a uniformly energy-bounded sequence with a convergent Lorentz defect.
\end{remark}

\section{The critical endpoint \texorpdfstring{$s=d/2=3/2$}{s=d/2=3/2} in genuine 3D}\label{sec:3d-endpoint}

We now prove the 3D part of \cref{thm:main-endpoint} at the isotropic
multiplier endpoint $s=d/2=3/2$ on $\T^3$. Throughout this section
$\eps=1$, $w=e_1$, $p=e_2$, $r=e_3=w\times p$, and, as in
\eqref{eq:2d-ray-phase},
\[
x_c(t)=x_0+tw,
\qquad
\phi(t,x)=w\cdot(x-x_0)-t.
\]

Unlike an $x_3$-independent lift, the coefficient below has nonzero Fourier modes spanning all three spatial directions. We call a coefficient genuinely 3D if it is not invariant under the continuous translations $x\mapsto x+sa$ for any nonzero direction $a\in\R^3$.

\subsection{A divergence-free Fourier-capacity core}

Fix $\vartheta\in C_c^\infty([0,\infty);[0,1])$ such that
$\vartheta=1$ on $[0,1]$ and $\vartheta=0$ on $[2,\infty)$. For
$k\in\mathbb Z^3\setminus\{0\}$, set
$k^\perp:=\{a\in\R^3:a\cdot k=0\}$ and let
\begin{equation}\label{eq:Leray-k}
 \Pi_k=I-\frac{k\otimes k}{|k|^2}
\end{equation}
be the orthogonal projection onto $k^\perp$.

\begin{lemma}[3D critical core]\label{lem:3d-critical-core}
For $L\ge2$, define
\begin{equation}\label{eq:SL}
 S_L=\sum_{k\in\mathbb Z^3\setminus\{0\}}
 \vartheta(e^{-L}|k|)|k|^{-3},
\end{equation}
and
\begin{equation}\label{eq:VL}
 V_L(x)=\frac{3}{2S_L}
 \sum_{k\in\mathbb Z^3\setminus\{0\}}
 \vartheta(e^{-L}|k|)|k|^{-3}\Pi_kw\,e^{ik\cdot x}.
\end{equation}
Then $V_L$ is real, smooth, and divergence-free, depends smoothly on $L$, and
\begin{equation}\label{eq:VL-bounds}
 V_L(0)=w,
 \qquad
 \norm{V_L}_{L^\infty(\T^3)}\le C,
 \qquad
 \norm{V_L}_{H^{3/2}(\T^3)}^2\le\frac{C}{L}.
\end{equation}
Moreover, $V_L$ is not invariant under translation in any nonzero spatial direction.
\end{lemma}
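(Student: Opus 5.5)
The plan is to verify each property directly from the Fourier series \eqref{eq:VL}. For realness, pair $k$ with $-k$: $\vartheta(e^{-L}|k|)|k|^{-3}$ and $\Pi_k$ are even in $k$, so the coefficients satisfy $\widehat V_L(-k)=\overline{\widehat V_L(k)}$ (they are real and even). The sum is finite, since $\vartheta$ is compactly supported and only $|k|\le 2e^L$ contribute. Hence $V_L$ is a trigonometric polynomial, smooth in $x$, and smooth in $L$ because each coefficient is. Divergence-freeness follows from $k\cdot\Pi_kw=0$. Translation non-invariance: if $V_L(x+sa)=V_L(x)$ for all $s$, then every nonzero coefficient must satisfy $k\cdot a=0$. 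The modes $k=e_2,e_3$ have $\Pi_kw=w\ne0$, and for $k=e_1+e_2$ one gets $\Pi_kw=w-\tfrac12(e_1+e_2)\neq0$. These three $k$ span $\R^3$, forcing $a=0$. We need $\vartheta(e^{-L}|k|)=1$ for these small $k$, which holds since $e^{L}\ge e^2>\sqrt2$.

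For the point value, $V_L(0)=\frac{3}{2S_L}\sum_k \vartheta(e^{-L}|k|)|k|^{-3}\Pi_kw$. The weights are invariant under the cubic group (coordinate permutations and sign changes), so $\sum_k m(|k|)\,k\otimes k/|k|^2$ is a multiple of the identity with trace $\sum_k m(|k|)=S_L$. Hence it equals $\frac{S_L}{3}I$, and
\[
 \sum_k m(|k|)\Pi_k = S_L I-\tfrac{S_L}{3}I=\tfrac{2S_L}{3}I .
\]
This gives $V_L(0)=w$. For the $H^{3/2}$ bound, Parseval with $|\Pi_kw|\le1$ and $\langle k\rangle^{3}\le C|k|^3$ gives
\[
 \norm{V_L}_{H^{3/2}}^2\le \frac{C}{S_L^2}\sum_k\vartheta^2|k|^{-6}|k|^{3}\le \frac{C}{S_L^2}\sum_{0<|k|\le2e^L}|k|^{-3}\le\frac{C L}{S_L^2}.
 \]
Lattice-point counting in dyadic shells also gives $S_L\ge \sum_{0<|k|\le e^L}|k|^{-3}\ge cL$ for $L\ge2$. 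Together these yield $\norm{V_L}_{H^{3/2}}^2\le C/L$.

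The main obstacle is the uniform $L^\infty$ bound, because the coefficients are not absolutely summable uniformly in $L$: the sum of $|k|^{-3}$ diverges like $L$, and dividing by $S_L\sim L$ only gives $O(1)$ trivially. That trivial bound actually suffices: $|V_L(x)|\le\frac{3}{2S_L}\sum_k\vartheta|k|^{-3}=\frac32$, since $|\Pi_kw|\le1$ and $0\le\vartheta\le1$. So $\norm{V_L}_\infty\le 3/2$, and the normalization by $S_L$ is exactly what makes the point value order one while the $H^{3/2}$ energy, weighted with the same $|k|^{-3}$ profile, is spread over $\sim L$ dyadic shells and becomes small. The only genuinely delicate inputs are therefore the two-sided counting $S_L\asymp L$ (the lower bound is what is needed) and the cubic-symmetry trace identity. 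I would state the latter as a separate one-line lemma, noting that the cutoff $\vartheta(e^{-L}|k|)$ is radial and hence preserves that symmetry.
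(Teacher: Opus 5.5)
Your proposal is correct and follows essentially the same route as the paper: real even coefficients and $k\cdot\Pi_kw=0$ give reality, smoothness and $\diver V_L=0$; the cubic-symmetry trace identity gives $V_L(0)=w$; the trivial bound gives $\norm{V_L}_\infty\le 3/2$; Parseval together with the dyadic-shell count $S_L\asymp L$ gives the $H^{3/2}$ estimate; and the modes $e_2,e_3,e_1+e_2$ rule out translation invariance. The $L^\infty$ bound that you flagged as the main obstacle is in fact immediate, as you yourself conclude, because the normalized coefficients sum to exactly $3/2$.
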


\begin{proof}
The sum in \eqref{eq:VL} is finite for each $L$. Its coefficients are even and real, and $k\cdot\Pi_kw=0$, which proves reality, smoothness, and $\diver V_L=0$. On every compact interval of $L$, only finitely many lattice modes occur, so the dependence on $L$ is smooth.

For $j\ge1$, write
\[
 N_j=\#\{k\in\mathbb Z^3:2^j\le|k|<2^{j+1}\}.
\]
Comparison with the volumes of unit cubes centered at lattice points gives
$c2^{3j}\le N_j\le C2^{3j}$. Hence every full shell below the cutoff contributes an amount between two positive constants to $S_L$. There are
$\lfloor L/\log2\rfloor-O(1)$ such shells, and the remaining transition shells contribute only $O(1)$. Consequently,
\begin{equation}\label{eq:SL-asymptotic}
 cL\le S_L\le CL.
\end{equation}

The weight in \eqref{eq:SL} is radial. Sign symmetry makes the off-diagonal entries vanish in the matrix sum below, while permutation symmetry makes its three diagonal entries equal. Taking the trace gives
\begin{equation}\label{eq:cubic-symmetry}
 \sum_{k\ne0}\vartheta(e^{-L}|k|)|k|^{-3}
 \frac{k\otimes k}{|k|^2}
 =\frac{S_L}{3}I.
\end{equation}
It follows that
\[
 V_L(0)=\frac{3}{2S_L}\left(S_L-\frac{S_L}{3}\right)w=w.
\]
Since $|\Pi_kw|\le1$, absolute summation gives
$\norm{V_L}_\infty\le3/2$. Parseval's identity, \eqref{eq:SL-asymptotic}, and
$\langle k\rangle^3\le C|k|^3$ for $k\ne0$ give
\[
 \norm{V_L}_{H^{3/2}}^2
 \le\frac{C}{S_L^2}
 \sum_{k\ne0}\vartheta(e^{-L}|k|)^2|k|^{-6}\langle k\rangle^3
 \le\frac{C}{S_L^2}\sum_{0<|k|\le2e^L}|k|^{-3}
 \le\frac{C}{L}.
\]
Finally, the Fourier coefficients at $k=e_2$, $k=e_3$, and $k=e_1+e_2$ are all nonzero for $L\ge2$. If $V_L$ were invariant in a direction $a$, then $a\cdot k=0$ for each of these modes. This successively gives $a_2=a_3=a_1=0$, proving the last assertion.
\end{proof}

Retain $\delta_n,A_n,L_n$ from \eqref{eq:An-Ln} and define the genuinely 3D coefficient
\begin{equation}\label{eq:vn-3d}
 v_n(t,x)=A_n(t)V_{L_n(t)}(x-x_c(t)).
\end{equation}
It is smooth and divergence-free, and
\begin{equation}\label{eq:vn-core-value}
 v_n(t,x_c(t))=A_n(t)w,
 \qquad
 \norm{v_n(t)}_\infty\le CA_n(t).
\end{equation}

\begin{proposition}[Vanishing 3D critical cost]\label{prop:3d-critical-cost}
The coefficients in \eqref{eq:vn-3d} satisfy
\begin{equation}\label{eq:vn-H32-vanish}
 v_n\longrightarrow0
 \quad\text{strongly in }L^2(0,T;H^{3/2}(\T^3)).
\end{equation}
\end{proposition}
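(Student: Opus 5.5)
The plan mirrors the proof of \cref{prop:2d-critical-cost}, with \cref{lem:3d-critical-core} playing the role of \cref{lem:moser-core}. Spatial translation by $x_c(t)$ only multiplies each Fourier coefficient by a unimodular factor, so it does not change the $H^{3/2}$ norm. Hence, for every $t$,
\[
 \norm{v_n(t)}_{H^{3/2}}^2
 =A_n(t)^2\norm{V_{L_n(t)}}_{H^{3/2}}^2
 \le C\frac{A_n(t)^2}{L_n(t)}
 =C\frac{A_n(t)^2}{n+A_n(t)^2},
\]
where we use \eqref{eq:VL-bounds}, which applies since $L_n(t)\ge n\ge2$. Unlike the 2D core, there is no additional $A^2e^{-2L}$ term to handle. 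Smoothness and the divergence-free property have already been recorded after \eqref{eq:vn-3d}. Measurability and continuity in $t$ follow because $V_L$ depends smoothly on $L$ and $A_n$, $L_n$ are smooth on $[0,T]$.

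It then remains to show that
\[
 \int_0^T\frac{A_n(t)^2}{n+A_n(t)^2}\,\dd t\longrightarrow0.
\]
Split the interval at $t=n^{-1/2}$. On $(0,n^{-1/2})$ the integrand is at most one, so that piece is $O(n^{-1/2})$. On $(n^{-1/2},T)$, the bound $t+\delta_n\ge t$ gives $A_n(t)^2\le C_\sigma(1+t^{-2})$, so that piece is at most
\[
 \frac{C}{n}\int_{n^{-1/2}}^T(1+t^{-2})\,\dd t\le\frac{C(T+n^{1/2})}{n}=O(n^{-1/2}).
\]
Together these prove \eqref{eq:vn-H32-vanish}.

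No step is a genuine obstacle, because the 3D capacity estimate $\norm{V_L}_{H^{3/2}}^2\le C/L$ was already established in \cref{lem:3d-critical-core}. The only point needing care is that the constant there is uniform in $L$. This uniformity comes from $S_L\ge cL$ in \eqref{eq:SL-asymptotic}, and it is what allows the time-dependent choice $L=L_n(t)$ to be inserted pointwise in $t$.
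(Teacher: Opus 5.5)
Your proposal is correct and follows the paper's proof essentially step for step. You use translation invariance together with the uniform-in-$L$ bound $\norm{V_L}_{H^{3/2}}^2\le C/L$ at $L=L_n(t)=n+A_n(t)^2$, then split the time integral at $t=n^{-1/2}$, bounding the integrand by one on the short piece and using $A_n(t)^2\le C(1+t^{-2})$ on the rest.
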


\begin{proof}
Translations preserve the Sobolev norm, so \cref{lem:3d-critical-core} and $L_n=n+A_n^2$ give
\[
 \int_0^T\norm{v_n(t)}_{H^{3/2}}^2\,\dd t
 \le C\int_0^T\frac{A_n(t)^2}{n+A_n(t)^2}\,\dd t.
\]
On $(0,n^{-1/2})$ the integrand is at most one. On $(n^{-1/2},T)$,
$A_n(t)^2\le C(1+t^{-2})$, and hence
\[
 \int_{n^{-1/2}}^T\frac{A_n(t)^2}{n+A_n(t)^2}\,\dd t
 \le\frac{C}{n}\int_{n^{-1/2}}^T(1+t^{-2})\,\dd t
 =O(n^{-1/2}).
\]
This proves \eqref{eq:vn-H32-vanish}.
\end{proof}

\subsection{The polarized six-component packet}

For a spatially constant coefficient $A(t)w$, order the state as
$Z=(E,B)\in\mathbb C^6$ and write
\begin{equation}\label{eq:CA-3d}
 C_A(t)(E,B)=\bigl(-\sigma E-\sigma A(t)w\times B,0\bigr).
\end{equation}
The Hermitian free Maxwell symbol is
\begin{equation}\label{eq:M3-symbol}
 M_3(\xi)(E,B)=(-\xi\times B,\xi\times E),
\end{equation}
so each Fourier mode solves
\begin{equation}\label{eq:3d-mode-ode}
 z'=(-iM_3(\xi)+C_A(t))z.
\end{equation}
For $\omega=\xi/|\xi|$, the eigenvalues of $M_3(\xi)$ are
$|\xi|,0,-|\xi|$, each with multiplicity two. The outgoing eigenspace is
\begin{equation}\label{eq:Eplus}
 \mathcal E_+(\omega)
 =\left\{2^{-1/2}(e,\omega\times e):e\perp\omega\right\}.
\end{equation}
Let $P_+(\omega)$ be the orthogonal projection onto this space.

\begin{lemma}[3D outgoing branch decoupling]\label{lem:3d-branch-decoupling}
Let $A\in C^\infty([0,T])$, $|\xi|\ge1$, and
$z(0)\in\mathcal E_+(\omega)$. The solution of \eqref{eq:3d-mode-ode} satisfies
\begin{equation}\label{eq:3d-branch-decoupling}
 \sup_{0\le t\le T}
 \left|z(t)-e^{-i|\xi|t+\gamma_+(t,\omega)}z(0)\right|
 \le\frac{K_A}{|\xi|}|z(0)|,
\end{equation}
where
\begin{equation}\label{eq:gamma-plus-3d}
 \gamma_+(t,\omega)
 =\frac\sigma2\int_0^t\bigl(A(s)w\cdot\omega-1\bigr)\,\dd s,
\end{equation}
and $K_A$ depends on $T$, $\norm{C_A}_{L^1}$,
$\norm{C_A}_{L^\infty}$, and $\norm{\partial_tC_A}_{L^1}$, but not on $\omega$ or $\xi$.
\end{lemma}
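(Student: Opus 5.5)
The plan is to repeat the proof of \cref{lem:2d-branch-decoupling} with orthogonal spectral projections in place of individual eigenvectors. The one new point is to show that the outgoing block of $C_A(t)$ is a scalar multiple of the identity on $\mathcal E_+(\omega)$. Once that holds, the multiplicity two of the outgoing eigenvalue causes no internal rotation inside the eigenspace.

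\textbf{Step 1: the outgoing block is scalar.} Write $P_\alpha(\omega)$, $\alpha\in\{+,0,-\}$, for the orthogonal projections onto the eigenspaces of $M_3(\xi)$. These have eigenvalues $\lambda_\alpha=|\xi|,0,-|\xi|$, the gaps between them are at least $|\xi|$, and the projections are independent of $t$. For $z=2^{-1/2}(e,\omega\times e)$ and $z'=2^{-1/2}(e',\omega\times e')$ with $e,e'\perp\omega$, the identity $w\times(\omega\times e)=\omega(w\cdot e)-e(w\cdot\omega)$ and the relation $\omega\cdot e'=0$ give
\[
\langle z',C_A(t)z\rangle=\tfrac12\bigl(-\sigma+\sigma A(t)\,w\cdot\omega\bigr)\langle e',e\rangle .
\]
Since $\langle z',z\rangle=\langle e',e\rangle$, this says $P_+C_AP_+=\tfrac\sigma2(Aw\cdot\omega-1)P_+=\dot\gamma_+P_+$. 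This is the 3D counterpart of \eqref{eq:2d-diagonal-coefficient}, with a scalar in place of a single diagonal entry.

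\textbf{Step 2: interaction picture.} Define the blocks
\[
\zeta_+(t)=e^{i|\xi|t-\gamma_+(t,\omega)}P_+z(t),\qquad \zeta_\beta(t)=e^{i\lambda_\beta t}P_\beta z(t)\quad(\beta=0,-).
\]
For $\beta=0,-$ I do not remove any diagonal part; the $\beta\beta$ blocks simply stay in the generator. Then $\zeta'=R_\xi(t)\zeta$, where
\begin{itemize}[leftmargin=2.4em]
\item the $++$ block of $R_\xi$ vanishes, by Step 1;
\item the off-diagonal blocks are $e^{i(\lambda_\alpha-\lambda_\beta)t}h_{\alpha\beta}(t)$ with $h_{\alpha\beta}$ bounded in $W^{1,1}$, with constants depending only on $\norm{C_A}_{L^1}$, $\norm{C_A}_{L^\infty}$ and $\norm{\partial_tC_A}_{L^1}$;
\item the diagonal $00$ and $--$ blocks are non-oscillatory, bounded by $\norm{C_A}$.
\end{itemize}
Gronwall gives $\sup_t|\zeta(t)|\le K_A|z(0)|$.

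\textbf{Step 3: smallness of the non-outgoing blocks.} Since $z(0)\in\mathcal E_+$, we have $\zeta_0(0)=\zeta_-(0)=0$. Each $\beta=0,-$ block obeys a linear equation with a bounded non-oscillatory diagonal part, forced by $e^{i(\lambda_\beta-\lambda_\alpha)t}h_{\beta\alpha}\zeta_\alpha$. I would treat the diagonal part with a Duhamel factor for that block, bounded by $e^{\norm{C_A}_{L^1}}$ since it is a matrix ODE with integrable coefficient. Integrating by parts in the oscillatory forcing, as in \eqref{eq:Kxi-bound}, the boundary terms and remainders are $O(|\xi|^{-1})$ provided $\zeta'$ is bounded in $L^1$, which follows from Step 2. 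Hence $|\zeta_0|+|\zeta_-|\le K_A|\xi|^{-1}|z(0)|$.

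\textbf{Step 4: the outgoing block.} This block satisfies $\zeta_+'=\sum_{\beta\ne+}e^{i(|\xi|-\lambda_\beta)t}h_{+\beta}\zeta_\beta$. Its right side is already $O(|\xi|^{-1})$ pointwise by Step 3, so $|\zeta_+(t)-z(0)|\le K_A|\xi|^{-1}|z(0)|$. Undoing the conjugation, with $|e^{\gamma_+}|\le e^{\norm{C_A}_{L^1}}$, yields \eqref{eq:3d-branch-decoupling}.

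The main obstacle is Step 1. Without exact scalarity, the $++$ block would generate a non-oscillatory rotation inside the two-dimensional eigenspace that cannot be integrated away, and the polarization would drift at order one rather than $O(|\xi|^{-1})$. With scalarity, the remainder of the argument is the 2D integration by parts with block matrices.
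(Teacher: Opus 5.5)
Your proposal is correct and follows essentially the paper's route: spectral projections $P_\pm,P_0$, exact scalarity $P_+C_AP_+=\frac\sigma2(Aw\cdot\omega-1)P_+$, an interaction picture, and integration by parts against the gaps $|\lambda_\alpha-\lambda_\beta|\ge|\xi|$. The only difference is bookkeeping. The paper conjugates every block by its fundamental matrix $U_\alpha$ and uses the primitive $K_\xi$ with a second integration by parts to get $|\zeta(t)-\zeta(0)|\le K_A|\xi|^{-1}|\zeta(0)|$ for arbitrary data, invoking scalarity only at the end to identify $U_+=e^{\gamma_+}I$. You instead use scalarity up front and exploit $\zeta_0(0)=\zeta_-(0)=0$ in a two-stage argument (Duhamel with integration by parts for the non-outgoing blocks, then direct integration for $\zeta_+$), which is equally valid.
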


\begin{proof}
We first record the spectral projections without choosing a global frame on $S^2$. Let
\[
 P_0(\omega)(E,B)=((E\cdot\omega)\omega,(B\cdot\omega)\omega),
 \qquad
 P_T=I-P_0.
\]
Thus
\[
 \begin{gathered}
 P_T(E,B)=\bigl(E-(E\cdot\omega)\omega,
 B-(B\cdot\omega)\omega\bigr),\\
 \operatorname{Ran}P_T=\{(E,B):E\cdot\omega=B\cdot\omega=0\}.
 \end{gathered}
\]
The range of $P_0$ is the longitudinal zero branch, whereas
$\operatorname{Ran}P_T$ is the transverse propagating space.
The vector triple-product identity gives
$M_3(\omega)^2(E,B)=P_T(E,B)$. On $\operatorname{Ran}P_T$ this is
$M_3(\omega)^2=I$, so the transverse eigenvalues are $\pm1$ and the
other two projections are
\begin{equation}\label{eq:3d-spectral-projections}
 P_\pm(\omega)=\frac12\bigl(P_T(\omega)\pm M_3(\omega)\bigr).
\end{equation}
Indeed, $P_TM_3=M_3P_T=M_3$ and $M_3^2=P_T$ give
\[
 P_\pm^2=\tfrac14(2P_T\pm2M_3)=P_\pm,
 \qquad P_+P_-=\tfrac14(P_T-M_3^2)=0,
 \qquad P_++P_-=P_T.
\]
Their ranges are the outgoing and incoming spaces
\[
 \begin{gathered}
 \operatorname{Ran}P_\pm=\mathcal E_\pm(\omega)
 =\{2^{-1/2}(e,\pm\omega\times e):e\perp\omega\},\\
 \operatorname{Ran}P_T=\mathcal E_+(\omega)\oplus\mathcal E_-(\omega).
 \end{gathered}
\]
The projections are smooth matrix-valued functions of $\omega$ and have operator norm one.

If $e,f\perp\omega$, the vector triple-product identity gives
\[
 \left\langle2^{-1/2}(f,\omega\times f),
 C_A(t)2^{-1/2}(e,\omega\times e)\right\rangle
 =\frac\sigma2\bigl(A(t)w\cdot\omega-1\bigr)
 \langle f,e\rangle_{\mathbb C^3}.
\]
Consequently,
\begin{equation}\label{eq:scalar-projected-ohm}
 P_+(\omega)C_A(t)P_+(\omega)
 =\frac\sigma2\bigl(A(t)w\cdot\omega-1\bigr)P_+(\omega).
\end{equation}

It remains to control interactions among the three rank-two blocks. For
$\alpha\in\{+,0,-\}$, put
$\mathcal E_\alpha=\operatorname{Ran}P_\alpha(\omega)$ and let
$\lambda_+=|\xi|$, $\lambda_0=0$, $\lambda_-=-|\xi|$. Let
$U_\alpha(t):\mathcal E_\alpha\to\mathcal E_\alpha$ be the fundamental matrix solving
\[
 U_\alpha'=P_\alpha C_AP_\alpha U_\alpha,
 \qquad
 U_\alpha(0)=I_{\mathcal E_\alpha}.
\]
It is invertible by uniqueness for the finite-dimensional linear system;
differentiating $U_\alpha^{-1}U_\alpha=I$ gives
\[
 (U_\alpha^{-1})'=-U_\alpha^{-1}P_\alpha C_AP_\alpha,
 \qquad U_\alpha^{-1}(0)=I_{\mathcal E_\alpha}.
\]
Since $\norm{P_\alpha}=1$, one has
$\norm{P_\alpha C_AP_\alpha}\le\norm{C_A}$. Gronwall applied to the two
equations yields
\[
 \max\{\norm{U_\alpha}_{L^\infty_t},
 \norm{U_\alpha^{-1}}_{L^\infty_t}\}
 \le e^{\norm{C_A}_{L^1_t}}.
\]
Integrating the corresponding bounds for their derivatives gives
\[
 \norm{U_\alpha'}_{L^1_t}
 +\norm{(U_\alpha^{-1})'}_{L^1_t}
 \le 2\norm{C_A}_{L^1_t}e^{\norm{C_A}_{L^1_t}}.
\]
Consequently, uniformly in $\omega$ and $\alpha$,
\begin{equation}\label{eq:Ualpha-bound}
 \norm{U_\alpha}_{L^\infty_t}
 +\norm{U_\alpha^{-1}}_{L^\infty_t}
 +\norm{U_\alpha'}_{L^1_t}
 +\norm{(U_\alpha^{-1})'}_{L^1_t}
 \le K_A.
\end{equation}
Introduce the interaction-picture variables
\[
 a_\alpha=e^{i\lambda_\alpha t}P_\alpha z,
 \qquad
 \zeta_\alpha=U_\alpha^{-1}a_\alpha.
\]
Using $P_\alpha M_3(\xi)=\lambda_\alpha P_\alpha$ in
\eqref{eq:3d-mode-ode} gives
\[
 a_\alpha'=P_\alpha C_AP_\alpha a_\alpha
 +\sum_{\beta\ne\alpha}
 e^{i(\lambda_\alpha-\lambda_\beta)t}P_\alpha C_AP_\beta a_\beta.
\]
The inverse equation cancels the diagonal term on differentiating
$\zeta_\alpha$:
\[
 \begin{aligned}
 \zeta_\alpha'
 &=U_\alpha^{-1}(a_\alpha'-P_\alpha C_AP_\alpha a_\alpha)\\
 &=\sum_{\beta\ne\alpha}
 e^{i(\lambda_\alpha-\lambda_\beta)t}
 U_\alpha^{-1}P_\alpha C_AP_\beta U_\beta\zeta_\beta.
 \end{aligned}
\]
For $\zeta=(\zeta_+,\zeta_0,\zeta_-)$ in the orthogonal direct sum of the
three branches, this is the interaction-picture system
\begin{equation}\label{eq:3d-interaction-system}
 \zeta'(t)=R_\xi(t)\zeta(t).
\end{equation}
Its diagonal blocks vanish, and for $\alpha\ne\beta$,
\[
 (R_\xi)_{\alpha\beta}
 =e^{i(\lambda_\alpha-\lambda_\beta)t}H_{\alpha\beta}(t,\omega),
 \qquad
 H_{\alpha\beta}=U_\alpha^{-1}P_\alpha C_AP_\beta U_\beta.
\]
Using \eqref{eq:Ualpha-bound} and differentiating this product shows
\begin{equation}\label{eq:Halpha-beta-bound}
 \sup_{\omega\in S^2}\max_{\alpha\ne\beta}
 \left(\norm{H_{\alpha\beta}}_{L^\infty_t}
 +\norm{\partial_tH_{\alpha\beta}}_{L^1_t}\right)
 \le K_A.
\end{equation}
Indeed, the product derivative contains one of $\partial_tC_A$, $(U_\alpha^{-1})'$, or $U_\beta'$, and the last two terms are controlled by
$\norm{C_A}_{L^\infty}\norm{C_A}_{L^1}$. This also makes the uniformity in $\omega$ explicit.

For $\alpha\ne\beta$, the spectral gap
$\mu_{\alpha\beta}=\lambda_\alpha-\lambda_\beta$ satisfies
$|\mu_{\alpha\beta}|\ge|\xi|$. Entrywise integration by parts gives
\[
 \int_0^te^{i\mu_{\alpha\beta}s}H_{\alpha\beta}(s)\,\dd s
 =\frac{e^{i\mu_{\alpha\beta}t}H_{\alpha\beta}(t)-H_{\alpha\beta}(0)}{i\mu_{\alpha\beta}}
 -\frac1{i\mu_{\alpha\beta}}
 \int_0^te^{i\mu_{\alpha\beta}s}\partial_sH_{\alpha\beta}(s)\,\dd s.
\]
Consequently, if $K_\xi(t)=\int_0^tR_\xi(s)\,\dd s$, then
\[
 \sup_{t\le T}\norm{K_\xi(t)}\le\frac{K_A}{|\xi|}.
\]
Moreover, $\norm{R_\xi}_{L^1_t}\le K_A$, so Gronwall gives
$\norm{\zeta}_{L^\infty_t}\le K_A|\zeta(0)|$.
To convert the small primitive $\norm{K_\xi}_{L^\infty_t}=O(|\xi|^{-1})$
into a bound for $\zeta(t)-\zeta(0)$, use $K_\xi'=R_\xi$, $K_\xi(0)=0$,
and integrate \eqref{eq:3d-interaction-system} by parts once more:
\[
 \begin{aligned}
 \zeta(t)-\zeta(0)
 &=\int_0^tK_\xi'(s)\zeta(s)\,\dd s\\
 &=K_\xi(t)\zeta(t)-\int_0^tK_\xi(s)\zeta'(s)\,\dd s\\
 &=K_\xi(t)\zeta(t)-\int_0^tK_\xi(s)R_\xi(s)\zeta(s)\,\dd s.
 \end{aligned}
\]
The right-hand side is bounded by
$\norm{K_\xi}_{L^\infty_t}(1+\norm{R_\xi}_{L^1_t})
\norm{\zeta}_{L^\infty_t}$, and hence
\begin{equation}\label{eq:zeta-block-bound}
 \sup_{t\le T}|\zeta(t)-\zeta(0)|\le\frac{K_A}{|\xi|}|\zeta(0)|.
\end{equation}
Finally, reconstruct the original mode as
\[
 z(t)=\sum_{\alpha\in\{+,0,-\}}
 e^{-i\lambda_\alpha t}U_\alpha(t)\zeta_\alpha(t).
\]
Because $z(0)\in\mathcal E_+$, one has
$\zeta_+(0)=z(0)$ and $\zeta_0(0)=\zeta_-(0)=0$.
Equation \eqref{eq:scalar-projected-ohm} gives
$U_+(t)=e^{\gamma_+(t,\omega)}I_{\mathcal E_+}$, so
\[
 z(t)-e^{-i|\xi|t+\gamma_+(t,\omega)}z(0)
 =\sum_\alpha e^{-i\lambda_\alpha t}U_\alpha(t)
 \bigl(\zeta_\alpha(t)-\zeta_\alpha(0)\bigr).
\]
By orthogonality of the branches, \eqref{eq:Ualpha-bound} and
\eqref{eq:zeta-block-bound} bound the supremum of its norm by
\[
 \max_\alpha\norm{U_\alpha}_{L^\infty_t}
 \sup_{t\le T}|\zeta(t)-\zeta(0)|
 \le\frac{K_A}{|\xi|}|z(0)|,
\]
which proves \eqref{eq:3d-branch-decoupling}.
\end{proof}

Set
\begin{equation}\label{eq:r0-polarization}
 r_0=2^{-1/2}(p,r)\in\mathcal E_+(w).
\end{equation}
Fix a real $a\in C_c^\infty(B(0,1/4)\subset\R^3)$ with $\norm{a}_2=1$, and put
$a_\eta(x)=\eta^{-3/2}a(x/\eta)$, periodized on $\T^3$ for sufficiently small $\eta$.

\begin{lemma}[Genuine 3D constant-core packet]\label{lem:3d-constant-packet}
Let $A\in C^\infty([0,T])$ and let
\begin{equation}\label{eq:q-general-3d}
 q'(t)=\frac\sigma2(A(t)-1)q(t).
\end{equation}
For every fixed sufficiently small $\eta>0$ there are real exact Maxwell--Ohm solutions
$Z_{\Lambda,\eta}^A=(E_{\Lambda,\eta}^A,B_{\Lambda,\eta}^A)$ with coefficient $A(t)w$, indexed by lattice carriers $\Lambda w$, such that
\begin{equation}\label{eq:3d-packet-limit}
 \norm{Z_{\Lambda,\eta}^A
 -q(t)a_\eta(x-x_c(t))\cos(\Lambda\phi(t,x))r_0}_{C([0,T];L^2_x)}
 \longrightarrow0.
\end{equation}
Their magnetic fields satisfy $\diver B_{\Lambda,\eta}^A=0$. If
\begin{equation}\label{eq:beta-lead-3d}
 \beta_{\Lambda,\eta}^{\rm lead}
 =2^{-1/2}q(t)a_\eta(x-x_c(t))\cos(\Lambda\phi(t,x)),
\end{equation}
then their currents, defined by
$J_{\Lambda,\eta}^A:=\sigma(E_{\Lambda,\eta}^A+A(t)w\times B_{\Lambda,\eta}^A)$, satisfy
\begin{equation}\label{eq:3d-current-packet-limit}
 J_{\Lambda,\eta}^A
 +\sigma(A-1)\beta_{\Lambda,\eta}^{\rm lead}p
 \longrightarrow0
 \quad\text{in }L^2_{t,x}.
\end{equation}
\end{lemma}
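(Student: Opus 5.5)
The proof will follow the planar argument of \cref{lem:2d-constant-packet}, with \cref{lem:3d-branch-decoupling} in place of \cref{lem:2d-branch-decoupling}. The new ingredients are the choice of polarization, so that the initial data lie in the outgoing space and satisfy $\diver B=0$, and the current identity.

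\textbf{Complex packet and initial data.} Write the complex leading packet as
\[
Y(t,x)=q(t)\,a_\eta(x-x_c(t))\,e^{i\Lambda\phi(t,x)}\,r_0 .
\]
For $\Lambda w\in\mathbb Z^3$, its Fourier coefficient at $\xi=\Lambda w+\ell$ is $q(t)\widehat a_\eta(\ell)e^{-i\xi\cdot x_0}e^{-i(\Lambda+w\cdot\ell)t}r_0$. At $t=0$ we cannot keep $r_0$, since it lies in $\mathcal E_+(w)$ and not in $\mathcal E_+(\omega)$ with $\omega=\xi/|\xi|$. Replace it by $P_+(\omega)r_0$, using the explicit projection \eqref{eq:3d-spectral-projections}. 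This vector lies in $\mathcal E_+(\omega)$, so its magnetic part has the form $\omega\times e$, which is orthogonal to $\xi$. Hence every initial mode satisfies $\xi\cdot\widehat B=0$. Since $P_+(\omega)$ depends smoothly on $\omega$, we also have $|P_+(\omega_{\Lambda,\ell})r_0-r_0|\to0$ for each fixed $\ell$. The zero mode $\ell=-\Lambda w$ is set to zero.

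\textbf{Evolution and comparison.} Evolve each mode by \eqref{eq:3d-mode-ode}. Faraday's law gives $\partial_t(\xi\cdot\widehat B)=-\xi\cdot(\xi\times\widehat E)=0$, so the exact solution keeps $\diver B=0$. For each fixed $\ell$, \cref{lem:3d-branch-decoupling} applies because $|\xi|\to\infty$, with a constant uniform in $\omega$. It gives
\[
\widehat{\mathcal Z}(t,\xi)=e^{-i|\xi|t+\gamma_+(t,\omega)}\,\widehat{\mathcal Z}(0,\xi)+O(|\xi|^{-1}).
\]
As $\Lambda\to\infty$ we have $\omega\to w$ and $|\xi|-\Lambda-w\cdot\ell\to0$. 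Therefore $\gamma_+(t,\omega)\to\frac\sigma2\int_0^t(A-1)$ uniformly in $t$, and $q(0)e^{\gamma_+}\to q(t)$ by \eqref{eq:q-general-3d}. This gives modewise uniform convergence to $\widehat Y$. The Gronwall majorant \eqref{eq:2d-mode-majorant} holds verbatim. With it, Parseval and the same two-step choice (first a frequency truncation $N$, then $\Lambda$) give $\mathcal Z-Y\to0$ in $C_tL^2_x$. Taking real parts gives \eqref{eq:3d-packet-limit}; since the coefficients are real-linear, the real part is again an exact solution, and it still has $\diver B=0$.

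\textbf{Current.} For the leading packet, $E^{\rm lead}=\beta^{\rm lead}p$ and $B^{\rm lead}=\beta^{\rm lead}r$. Since $w\times r=w\times(w\times p)=-p$,
\[
\sigma\bigl(E^{\rm lead}+Aw\times B^{\rm lead}\bigr)=\sigma(1-A)\beta^{\rm lead}p .
\]
This is exactly the cancellation required by \eqref{eq:3d-current-packet-limit}. The remaining difference is estimated as in the 2D lemma:
\[
\norm{J-J^{\rm lead}}_{L^2_{t,x}}\le\sigma\sqrt T\,\norm{E-E^{\rm lead}}_{C_tL^2_x}+\sigma\norm{A}_{L^2_t}\norm{B-B^{\rm lead}}_{C_tL^2_x}\to0 .
\]

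\textbf{Main obstacle.} The delicate point is the rank-two outgoing eigenspace. Branch decoupling alone could rotate the polarization inside $\mathcal E_+(\omega)$. That rotation is excluded by the scalar identity \eqref{eq:scalar-projected-ohm}, which \cref{lem:3d-branch-decoupling} already encodes in the factor $e^{\gamma_+}$. What remains here is to check that the projected initial polarization $P_+(\omega)r_0$ converges to $r_0$ uniformly on the finitely many retained modes.
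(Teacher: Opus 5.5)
Your proposal is correct and follows the paper's proof essentially step by step. Both arguments project the initial modes via $P_+(\omega_{\Lambda,\ell})r_0$, apply the uniform 3D branch decoupling with its scalar factor $e^{\gamma_+}$, conclude with the Gronwall majorant and the $N$-then-$\Lambda$ Parseval argument, propagate $\xi\cdot\widehat B=0$ through Faraday's law, and use $w\times r=-p$ for the current cancellation. The paper treats the case $q(0)=0$ separately, but your argument covers it automatically, since $q(t)=q(0)\exp\bigl(\frac\sigma2\int_0^t(A-1)\bigr)$.
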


\begin{proof}
Construct first a complex exact solution
$\mathcal Z_{\Lambda,\eta}^A=(\mathcal E_{\Lambda,\eta}^A,\mathcal B_{\Lambda,\eta}^A)$
and take its real part at the end. Recall that
$x_c(t)=x_0+tw$ and $\phi(t,x)=w\cdot(x-x_0)-t$, so $\phi(t,x_c(t))=0$.
The complex leading packet is
\begin{equation}\label{eq:complex-leading-3d}
 \mathcal Y_{\Lambda,\eta}^A(t,x)
 =q(t)a_\eta(x-x_c(t))e^{i\Lambda\phi(t,x)}r_0.
\end{equation}
Indeed, expanding the translated envelope gives
\[
 a_\eta(x-x_c(t))e^{i\Lambda\phi(t,x)}
 =\sum_{\ell\in\mathbb Z^3}\widehat a_\eta(\ell)
 e^{i(\Lambda w+\ell)\cdot x}
 e^{-i(\Lambda w+\ell)\cdot x_0}e^{-i(\Lambda+w\cdot\ell)t}.
\]
Thus, at the frequency $\xi=\Lambda w+\ell$, its Fourier coefficient is
\begin{equation}\label{eq:leading-Fourier-3d}
 \widehat{\mathcal Y}_{\Lambda,\eta}^A(t,\Lambda w+\ell)
 =q(t)\widehat a_\eta(\ell)e^{-i(\Lambda w+\ell)\cdot x_0}
 e^{-i(\Lambda+w\cdot\ell)t}r_0.
\end{equation}
For $\xi=\Lambda w+\ell\ne0$, let
$\omega_{\Lambda,\ell}=\xi/|\xi|$ and prescribe the exact solution by
\begin{equation}\label{eq:3d-packet-initial-Fourier}
 \widehat{\mathcal Z}_{\Lambda,\eta}^A(0,\Lambda w+\ell)
 =q(0)\widehat a_\eta(\ell)e^{-i(\Lambda w+\ell)\cdot x_0}
 P_+(\omega_{\Lambda,\ell})r_0.
\end{equation}
Set the zero mode to zero. The projection in \eqref{eq:3d-packet-initial-Fourier} places every nonzero datum in $\mathcal E_+(\omega_{\Lambda,\ell})$. Apply \cref{lem:3d-branch-decoupling} with $\omega=\omega_{\Lambda,\ell}$ and substitute that datum to obtain
\begin{equation}\label{eq:3d-mode-asymptotic}
 \begin{aligned}
 \widehat{\mathcal Z}_{\Lambda,\eta}^A(t,\xi)
 ={}&q(0)\widehat a_\eta(\ell)e^{-i\xi\cdot x_0}
 e^{-i|\xi|t+\gamma_+(t,\omega_{\Lambda,\ell})}
 P_+(\omega_{\Lambda,\ell})r_0\\
 &+R_{\Lambda,\ell}(t),
 \end{aligned}
\end{equation}
where $\norm{P_+(\omega_{\Lambda,\ell})}=1$ and $|r_0|=1$ give
\begin{equation}\label{eq:3d-mode-remainder}
 \sup_{t\le T}|R_{\Lambda,\ell}(t)|
 \le\frac{K_A}{|\Lambda w+\ell|}
 |q(0)|\,|\widehat a_\eta(\ell)|.
\end{equation}
For each fixed $\ell$,
\begin{equation}\label{eq:3d-carrier-asymptotics}
 \omega_{\Lambda,\ell}\to w,
 \qquad
 |\Lambda w+\ell|-\Lambda-w\cdot\ell\to0,
 \qquad
 P_+(\omega_{\Lambda,\ell})r_0\to r_0.
\end{equation}
Moreover,
\[
 q(0)e^{\gamma_+(t,\omega_{\Lambda,\ell})}
 \longrightarrow q(0)\exp\left(\frac\sigma2\int_0^t(A(s)-1)\,\dd s\right)
 =q(t)
\]
uniformly in $t$ for fixed $\ell$. If $q(0)=0$, uniqueness of \eqref{eq:q-general-3d} gives $q\equiv0$, and the assertion is trivial.

The preceding limits show, for each fixed $\ell$, convergence of \eqref{eq:3d-mode-asymptotic} to \eqref{eq:leading-Fourier-3d}, uniformly in $t\in[0,T]$.
To justify summation, write $z_\xi(t)=\widehat{\mathcal Z}_{\Lambda,\eta}^A(t,\xi)$.
The Hermiticity of $M_3(\xi)$ in \eqref{eq:3d-mode-ode} gives
\[
 \frac12\frac{\dd}{\dd t}|z_\xi|^2
 =\operatorname{Re}\langle z_\xi,C_A(t)z_\xi\rangle
 \le\norm{C_A(t)}|z_\xi|^2.
\]
Gronwall's lemma, \eqref{eq:3d-packet-initial-Fourier}, and $\norm{P_+}=1$ therefore give
\begin{equation}\label{eq:3d-mode-majorant}
 \sup_{t\le T}
 |\widehat{\mathcal Z}_{\Lambda,\eta}^A(t,\Lambda w+\ell)|
 \le e^{\norm{C_A}_{L^1_t}}|q(0)|\,|\widehat a_\eta(\ell)|,
\end{equation}
while \eqref{eq:leading-Fourier-3d} is bounded by
$\norm{q}_{L^\infty_t}|\widehat a_\eta(\ell)|$.
For fixed $\eta$, smoothness of $a_\eta$ gives rapid decay of $\widehat a_\eta$ and, in particular, square summability. Set
\[
 \Delta_{\Lambda,\ell}(t)
 :=\widehat{\mathcal Z}_{\Lambda,\eta}^A(t,\Lambda w+\ell)
   -\widehat{\mathcal Y}_{\Lambda,\eta}^A(t,\Lambda w+\ell).
\]
For fixed $N$, the finite sum over $|\ell|\le N$ tends to zero uniformly in $t$. Parseval's identity and the common majorant give
\begin{equation}\label{eq:3d-packet-parseval}
 \begin{aligned}
 \sup_{t\le T}\norm{\mathcal Z_{\Lambda,\eta}^A(t)
                 -\mathcal Y_{\Lambda,\eta}^A(t)}_2^2
 &=(2\pi)^3\sup_{t\le T}\sum_{\ell\in\mathbb Z^3}
                  |\Delta_{\Lambda,\ell}(t)|^2\\
 &\le (2\pi)^3\sum_{|\ell|\le N}\sup_{t\le T}|\Delta_{\Lambda,\ell}(t)|^2
       +C[A,q]^2\sum_{|\ell|>N}|\widehat a_\eta(\ell)|^2.
 \end{aligned}
\end{equation}
The omitted zero mode corresponds to $\ell=-\Lambda w$; for $\Lambda>2N$ it lies in the same tail and satisfies the same majorant. Given $\epsilon>0$, first choose $N$ so the tail in \eqref{eq:3d-packet-parseval} is below $\epsilon/2$, then choose $\Lambda$ so the finite sum is below $\epsilon/2$. This proves convergence in $C_tL^2_x$. Taking
$Z_{\Lambda,\eta}^A:=\operatorname{Re}\mathcal Z_{\Lambda,\eta}^A$
and the real part of \eqref{eq:complex-leading-3d} proves \eqref{eq:3d-packet-limit}.

For each nonzero mode, write
$P_+(\omega_{\Lambda,\ell})r_0
 =2^{-1/2}(e_{\Lambda,\ell},\omega_{\Lambda,\ell}\times e_{\Lambda,\ell})$.
Its magnetic component is orthogonal to $\xi=|\xi|\omega_{\Lambda,\ell}$, so the initial complex magnetic field satisfies
$\xi\cdot\widehat{\mathcal B}_{\Lambda,\eta}^A(0,\xi)=0$.
Fourier-transforming Faraday's law gives
\[
 \partial_t\widehat{\mathcal B}_{\Lambda,\eta}^A(t,\xi)
 =-i\xi\times\widehat{\mathcal E}_{\Lambda,\eta}^A(t,\xi),
 \qquad
 \partial_t(\xi\cdot\widehat{\mathcal B}_{\Lambda,\eta}^A)
 =-i\xi\cdot(\xi\times\widehat{\mathcal E}_{\Lambda,\eta}^A)=0.
\]
The zero mode is divergence-free as well. Taking real parts therefore gives
$\diver B_{\Lambda,\eta}^A=0$ exactly.

For the real leading packet
$(E_{\Lambda,\eta}^{\rm lead},B_{\Lambda,\eta}^{\rm lead})
 :=\operatorname{Re}\mathcal Y_{\Lambda,\eta}^A$,
omitting the subscripts temporarily, \eqref{eq:beta-lead-3d} gives
\[
 E^{\rm lead}=\beta^{\rm lead}p,
 \qquad
 B^{\rm lead}=\beta^{\rm lead}r,
 \qquad
 w\times r=-p.
\]
Thus
\[
 J^{\rm lead}
 =\sigma(E^{\rm lead}+Aw\times B^{\rm lead})
 =-\sigma(A-1)\beta^{\rm lead}p.
\]
Finally, by Ohm's law and \eqref{eq:3d-packet-limit},
\[
 \norm{J_{\Lambda,\eta}^A-J_{\Lambda,\eta}^{\rm lead}}_{L^2_{t,x}}
 \le\sigma\sqrt T\norm{E_{\Lambda,\eta}^A-E_{\Lambda,\eta}^{\rm lead}}_{C_tL^2_x}
 +\sigma\norm{A}_{L^2_t}\norm{B_{\Lambda,\eta}^A-B_{\Lambda,\eta}^{\rm lead}}_{C_tL^2_x}
 \longrightarrow0.
\]
This proves \eqref{eq:3d-current-packet-limit}.
\end{proof}

\subsection{Exactification and the 3D defect}

\begin{proof}[Proof of \cref{thm:main-endpoint}, part~\textup{(ii)}]
Take the coefficients $v_n$ from \eqref{eq:vn-3d}. Their critical convergence is \cref{prop:3d-critical-cost}. Put
\[
 q_n(t)=\frac{t+\delta_n}{T+\delta_n};
\]
then \eqref{eq:q-general-3d} holds with $A=A_n$ and
\begin{equation}\label{eq:3d-q-identities}
 q_n(0)\to0,
 \qquad
 q_n(T)=1,
 \qquad
 \sigma(A_n-1)q_n=\frac2{T+\delta_n}.
\end{equation}

We carry out the diagonal localization, recording the only modification from the planar argument. Define
\begin{equation}\label{eq:3d-diagonal-ledger}
 \begin{aligned}
 U_n&=1+\norm{v_n}_{L^2_tL^\infty_x},\\
 H_n&=1+\norm{A_n}_{L^1(0,T)}+\norm{A_n}_{L^2(0,T)},\\
 G_n&=\exp\left(C\int_0^T(1+\norm{v_n(t)}_\infty)\,\dd t\right).
 \end{aligned}
\end{equation}
In the planar construction, $u_n=A_nw$ exactly on a moving ball, so the leading packet sees zero coefficient mismatch. The 3D Fourier-capacity core instead gives $v_n(t,x_c(t))=A_n(t)w$ at the center, without an exact flat spatial core. We therefore measure the local mismatch on the packet's support scale $\eta/4$ by
\begin{equation}\label{eq:3d-local-mismatch}
 \mathcal F_n(t,y):=v_n(t,x_c(t)+y)-A_n(t)w.
\end{equation}
For fixed $n$, this function is continuous and $\mathcal F_n(t,0)=0$ for every $t$ by \eqref{eq:vn-core-value}. Uniform continuity on a compact neighborhood of $[0,T]\times\{0\}$ consequently gives
\begin{equation}\label{eq:kappa-n}
 \kappa_n(\eta)
 :=\sup_{0\le t\le T}\sup_{|y|\le\eta/4}
 |v_n(t,x_c(t)+y)-A_n(t)w|
 \longrightarrow0
 \quad(\eta\to0).
\end{equation}
For each fixed $n$, choose $0<\eta_n\le1/n$ sufficiently small that
\begin{equation}\label{eq:kappa-choice}
 \kappa_n(\eta_n)\le\frac1{nG_nU_n}.
\end{equation}
Thus $\eta_n\to0$, and the local mismatch is small enough to absorb the later factors $G_n$ and $U_n$. Fix this $\eta_n$ before selecting a carrier.
Apply \cref{lem:3d-constant-packet} with $A=A_n$, $q=q_n$, and $\eta=\eta_n$, and define the real fields and currents by
\begin{equation}\label{eq:3d-specialized-packets}
 \begin{aligned}
 Z_{n,\Lambda}&:=Z_{\Lambda,\eta_n}^{A_n}
                 =(E_{n,\Lambda}^Z,B_{n,\Lambda}^Z),\\
 Y_{n,\Lambda}(t,x)&:=q_n(t)a_{\eta_n}(x-x_c(t))
                    \cos(\Lambda\phi(t,x))r_0\\
                 &=:(E_{n,\Lambda}^{\rm lead},B_{n,\Lambda}^{\rm lead}),\\
 J_{n,\Lambda}^Z&:=\sigma(E_{n,\Lambda}^Z+A_nw\times B_{n,\Lambda}^Z),\\
 J_{n,\Lambda}^{\rm lead}&:=\sigma(E_{n,\Lambda}^{\rm lead}
                          +A_nw\times B_{n,\Lambda}^{\rm lead})\\
                 &=-\frac{\sigma(A_n-1)q_n}{\sqrt2}
                   a_{\eta_n}(x-x_c(t))\cos(\Lambda\phi(t,x))p.
 \end{aligned}
\end{equation}
For each fixed $n$, \eqref{eq:3d-packet-limit} and \eqref{eq:3d-current-packet-limit} give
\begin{equation}\label{eq:3d-specialized-convergence}
 \norm{Z_{n,\Lambda}-Y_{n,\Lambda}}_{C_tL^2_x}\to0,
 \qquad
 \norm{J_{n,\Lambda}^Z-J_{n,\Lambda}^{\rm lead}}_{L^2_{t,x}}\to0
 \quad(\Lambda\to\infty).
\end{equation}
Since $U_n,H_n,G_n$ are finite for fixed $n$, choose a lattice carrier $\Lambda_n$ so large that, writing
$Z_n:=Z_{n,\Lambda_n}$, $Y_n:=Y_{n,\Lambda_n}$,
$J_n^Z:=J_{n,\Lambda_n}^Z$, and $J_n^{\rm lead}:=J_{n,\Lambda_n}^{\rm lead}$,
we have
\begin{equation}\label{eq:3d-field-choice}
 \norm{Z_n-Y_n}_{C_tL^2_x}
 \le\frac1{nG_nU_nH_n},
\end{equation}
\begin{equation}\label{eq:3d-current-choice}
 \norm{J_n^Z-J_n^{\rm lead}}_{L^2_{t,x}}
 \le\frac1{nG_nU_n},
\end{equation}
and enlarge the same carrier, if needed, so that
\begin{equation}\label{eq:3d-oscillation-choice}
 \Lambda_n\eta_n\ge n.
\end{equation}
The convergence in \eqref{eq:3d-specialized-convergence} ensures that both error bounds still hold for all sufficiently large carriers. This is the sequential choice of $\eta_n$ and then $\Lambda_n$ for each $n$, and requires no packet convergence rate uniform in $n$. We use the corresponding component notation
$Z_n=(E_n^Z,B_n^Z)$ and $Y_n=(E_n^{\rm lead},B_n^{\rm lead})$.

Split $B_n^Z=B_n^{\rm lead}+(B_n^Z-B_n^{\rm lead})$. The first term is supported in $B(x_c(t),\eta_n/4)$ and has uniformly bounded spatial $L^2$ norm. On that support, the coefficient mismatch is bounded by $\kappa_n(\eta_n)$. On the other hand,
$\norm{v_n-A_nw}_\infty\le CA_n$. Therefore, for $p=1,2$,
\begin{align*}
 \norm{(v_n-A_nw)\times B_n^Z}_{L^p_tL^2_x}
 \le{}&T^{1/p}\kappa_n(\eta_n)
 \norm{B_n^{\rm lead}}_{L^\infty_tL^2_x}\\
 &+C\norm{A_n}_{L^p_t}
 \norm{B_n^Z-B_n^{\rm lead}}_{C_tL^2_x}.
\end{align*}
Since $H_n\ge1+\norm{A_n}_{L^1_t}+\norm{A_n}_{L^2_t}$, \eqref{eq:kappa-choice} and \eqref{eq:3d-field-choice} give
\begin{equation}\label{eq:3d-mismatch-bound}
 \norm{(v_n-A_nw)\times B_n^Z}_{L^p_tL^2_x}
 \le\frac{C}{nG_nU_n},
 \qquad p=1,2.
\end{equation}

Let $X_n=(E_n,B_n)$ be the exact solution for the actual coefficient $v_n$ with $X_n(0)=Z_n(0)$. The difference $D_n=X_n-Z_n$ satisfies
\begin{equation}\label{eq:3d-forced-difference}
 \begin{aligned}
 \partial_tD_n&=A_{\sigma,3}D_n+\cB_{v_n}(t)D_n+F_n,
 &D_n(0)&=0,\\
 F_n&=\bigl(-\sigma(v_n-A_nw)\times B_n^Z,0\bigr).
 \end{aligned}
\end{equation}
Here $\eps=1$, so the $\Hh_3$ norm equals the spatial $L^2$ norm of the state. Taking the Maxwell energy inner product and using dissipativity of $A_{\sigma,3}$ yields
\[
 \frac12\frac{\dd}{\dd t}\norm{D_n}_{\Hh_3}^2
 \le C\norm{v_n(t)}_\infty\norm{D_n}_{\Hh_3}^2
      +\norm{F_n(t)}_{\Hh_3}\norm{D_n}_{\Hh_3}.
\]
Dividing after regularizing the norm at zero gives
$\frac{\dd}{\dd t}\norm{D_n}_{\Hh_3}
 \le C\norm{v_n(t)}_\infty\norm{D_n}_{\Hh_3}+\norm{F_n(t)}_{\Hh_3}$
in its integrated form. Gronwall's lemma and \eqref{eq:3d-mismatch-bound} with $p=1$ give
\begin{equation}\label{eq:3d-field-correction}
 \norm{D_n}_{C_tL^2_x}
 \le G_n\norm{F_n}_{L^1_tL^2_x}
 \le\frac{C}{nU_n}.
\end{equation}
In particular,
$\norm{v_n\times(B_n-B_n^Z)}_{L^2_{t,x}}\le C/n$. Since
\[
 j_n-J_n^Z
 =\sigma\bigl(E_n-E_n^Z
 +v_n\times(B_n-B_n^Z)
 +(v_n-A_nw)\times B_n^Z\bigr),
\]
\eqref{eq:3d-mismatch-bound}--\eqref{eq:3d-field-correction} give
\begin{equation}\label{eq:3d-current-correction}
 \norm{j_n-J_n^Z}_{L^2_{t,x}}\longrightarrow0.
\end{equation}

We next make the initial, terminal, and current bounds explicit. Since $|r_0|=1$ and $q_n(0)=\delta_n/(T+\delta_n)$,
\begin{equation}\label{eq:3d-initial-leading}
 \norm{Y_n(0)}_2\le q_n(0)\longrightarrow0.
\end{equation}
At the terminal time, $q_n(T)=1$ and \cref{lem:ray-concentration}, with $d=3$, gives
\begin{equation}\label{eq:3d-terminal-leading}
 \norm{Y_n(T)}_2^2
 =\int_{\T^3}a_{\eta_n}^2(x-x_c(T))
 \cos^2(\Lambda_n\phi(T,x))\,\dd x
 \longrightarrow\frac12.
\end{equation}
Here \eqref{eq:3d-oscillation-choice} supplies
$\Lambda_n\eta_n\to\infty$. Combining
\eqref{eq:3d-initial-leading}--\eqref{eq:3d-terminal-leading} with
\eqref{eq:3d-field-choice} proves
$\norm{Z_n(0)}_2\to0$ and
$\liminf_n\norm{Z_n(T)}_2>0$.
Since $X_n(0)=Z_n(0)$, we immediately have
$\norm{X_n(0)}_2=\norm{Z_n(0)}_2\to0$.
At the terminal time, \eqref{eq:3d-field-correction} gives
\[
 \norm{X_n(T)-Z_n(T)}_2
 \le\norm{X_n-Z_n}_{C_tL^2_x}
 \le\frac{C}{nU_n}\longrightarrow0.
\]
The reverse triangle inequality therefore yields
\[
 \liminf_n\norm{X_n(T)}_2
 \ge\liminf_n\norm{Z_n(T)}_2
       -\limsup_n\norm{X_n(T)-Z_n(T)}_2>0.
\]

The packet data have divergence-free magnetic component by \cref{lem:3d-constant-packet}; because $X_n(0)=Z_n(0)$, Faraday's law propagates $\diver B_n=0$ for the corrected solution. Finally, using \eqref{eq:3d-q-identities},
\begin{equation}\label{eq:3d-leading-current-norm}
 \begin{aligned}
 \norm{J_n^{\rm lead}(t)}_2^2
 &=\frac{2}{(T+\delta_n)^2}
 \int_{\T^3}a_{\eta_n}^2(x-x_c(t))
 \cos^2(\Lambda_n\phi(t,x))\,\dd x\\
 &=\frac{1+o(1)}{(T+\delta_n)^2},
 \end{aligned}
\end{equation}
uniformly in $t$. The uniformity follows because, after the rescaling
$x=x_c(t)+\eta_ny$, the spatial integral is independent of $t$. Equations \eqref{eq:3d-current-choice} and \eqref{eq:3d-current-correction} transfer the resulting $L^2_{t,x}$ bound to $j_n$.

It remains to identify the force. Applying \cref{lem:ray-concentration} with $d=3$ gives
\begin{equation}\label{eq:3d-ray-measure}
 a_{\eta_n}^2(x-x_c(t))\cos^2(\Lambda_n\phi(t,x))\,\dd x\,\dd t
 \weakstar\frac12\,\dd t\,\delta_{x=x_c(t)}.
\end{equation}
Writing $\beta_n^{\rm lead}$ as in \eqref{eq:beta-lead-3d}, we have
\[
 J_n^{\rm lead}=-\sigma(A_n-1)\beta_n^{\rm lead}p,
 \qquad
 B_n^{\rm lead}=\beta_n^{\rm lead}r,
 \qquad
 p\times r=w.
\]
Together with
\begin{equation}\label{eq:3d-force-coefficient}
 \sigma(A_n-1)q_n^2
 =\frac{2(t+\delta_n)}{(T+\delta_n)^2}
 \longrightarrow\frac{2t}{T^2},
\end{equation}
\eqref{eq:3d-ray-measure} yields
\begin{equation}\label{eq:3d-leading-defect}
 J_n^{\rm lead}\times B_n^{\rm lead}
 \weakstar-\frac{t}{2T^2}w\,\dd t\,\delta_{x=x_c(t)}.
\end{equation}

We record both replacements. For a smooth test field $\Psi$,
\begin{align*}
 &\left|\int(J_n^Z\times B_n^Z-J_n^{\rm lead}\times B_n^{\rm lead})\cdot\Psi\right|\\
 &\quad\le\norm\Psi_\infty\left(
 \norm{J_n^Z-J_n^{\rm lead}}_{L^2_{t,x}}\norm{B_n^Z}_{L^2_{t,x}}
 +\norm{J_n^{\rm lead}}_{L^2_{t,x}}
 \norm{B_n^Z-B_n^{\rm lead}}_{L^2_{t,x}}
 \right)\longrightarrow0
\end{align*}
by \eqref{eq:3d-field-choice}--\eqref{eq:3d-current-choice}. Similarly,
\begin{align*}
 &\left|\int(j_n\times B_n-J_n^Z\times B_n^Z)\cdot\Psi\right|\\
 &\quad\le\norm\Psi_\infty\left(
 \norm{j_n-J_n^Z}_{L^2_{t,x}}\norm{B_n}_{L^2_{t,x}}
 +\norm{J_n^Z}_{L^2_{t,x}}
 \norm{B_n-B_n^Z}_{L^2_{t,x}}
 \right)\longrightarrow0
\end{align*}
by \eqref{eq:3d-field-correction} and \eqref{eq:3d-current-correction}. Thus the leading force limit is also the distributional limit of $j_n\times B_n$.

Finally, the uniform $L^\infty_tL^2_x$ field bound and $L^2_{t,x}$ current bound give
\[
 \sup_n\norm{j_n\times B_n}_{L^1_{t,x}}<\infty.
\]
Approximating continuous test fields uniformly by smooth ones upgrades the distributional convergence to weak-* convergence of vector-valued Radon measures and proves \eqref{eq:intro-3d-defect}.
\end{proof}

\section*{Acknowledgments}

The author is partially supported by NSF grant DMS-2510425 and by the Simons
Foundation grant MPS-TSM-00007824.

\section*{Statement on the use of artificial intelligence}

Generative AI tools ChatGPT (OpenAI) and Claude (Anthropic)  were used during the preparation of this manuscript for
editorial assistance, consistency checks, LaTeX preparation and improvements in exposition.
All mathematical content and references were independently verified by the
author, who takes full responsibility for the final manuscript.


\begin{thebibliography}{99}
\footnotesize
\setlength{\itemsep}{-0.5pt}

\bibitem{AdamsHedberg1996}
David~R. Adams and Lars~Inge Hedberg.
\newblock \emph{Function Spaces and Potential Theory}, volume 314 of Grundlehren der mathematischen Wissenschaften.
\newblock Springer, Berlin, 1996.

\bibitem{AkianSavin2024}
Jean-Luc Akian and \'Eric Savin.
\newblock Wigner measures of electromagnetic waves in heterogeneous bianisotropic media.
\newblock \emph{Wave Motion}, 127:103296, 2024.

\bibitem{ArsenioGallagher2020}
Diogo Ars\'{e}nio and Isabelle Gallagher.
\newblock Solutions of Navier--Stokes--Maxwell systems in large energy spaces.
\newblock \emph{Transactions of the American Mathematical Society}, 373(6):3853--3884, 2020.

\bibitem{ArsenioHassainiaHouamed2024}
Diogo Ars\'{e}nio, Zineb Hassainia, and Haroune Houamed.
\newblock Axisymmetric incompressible viscous plasmas: Global well-posedness and asymptotics.
\newblock \emph{Forum of Mathematics, Sigma}, 12:e79, 2024.

\bibitem{ArsenioHouamedSaidHouari2025}
Diogo Ars\'{e}nio, Haroune Houamed, and Belkacem Said-Houari.
\newblock Global unique solutions to the planar inhomogeneous Navier--Stokes--Maxwell equations.
\newblock \emph{Journal of Differential Equations}, 446:113661, 2025.

\bibitem{ArsenioIbrahimMasmoudi2015}
Diogo Ars\'{e}nio, Slim Ibrahim, and Nader Masmoudi.
\newblock A derivation of the magnetohydrodynamic system from Navier--Stokes--Maxwell systems.
\newblock \emph{Archive for Rational Mechanics and Analysis}, 216(3):767--812, 2015.

\bibitem{ArsenioSaintRaymond2019}
Diogo Ars\'{e}nio and Laure Saint-Raymond.
\newblock \emph{From the Vlasov--Maxwell--Boltzmann System to Incompressible Viscous Electro-Magneto-Hydrodynamics. Volume I}.
\newblock EMS Monographs in Mathematics. European Mathematical Society, Z\"urich, 2019.






\bibitem{BahouriCheminDanchin2011}
Hajer Bahouri, Jean-Yves Chemin, and Rapha\"el Danchin.
\newblock \emph{Fourier Analysis and Nonlinear Partial Differential Equations}, volume 343 of Grundlehren der mathematischen Wissenschaften.
\newblock Springer, Heidelberg, 2011.

\bibitem{Ball1977}
John~M. Ball.
\newblock Strongly continuous semigroups, weak solutions, and the variation of constants formula.
\newblock \emph{Proceedings of the American Mathematical Society}, 63(2):370--373, 1977.

\bibitem{BallMarsdenSlemrod1982}
J.~M. Ball, J.~E. Marsden, and M. Slemrod.
\newblock Controllability for distributed bilinear systems.
\newblock \emph{SIAM Journal on Control and Optimization}, 20(4):575--597, 1982.

\bibitem{BoussaidCaponigroChambrion2019}
N. Boussaid, M. Caponigro, and T. Chambrion.
\newblock On the Ball--Marsden--Slemrod obstruction for bilinear control systems.
\newblock In \emph{2019 IEEE 58th Conference on Decision and Control (CDC)}, pages 4971--4976. IEEE, 2019.



\bibitem{Dirr2023}
G. Dirr.
\newblock Compactness of fixed point maps and the
Ball--Marsden--Slemrod conjecture.
\newblock \emph{SIAM Journal on Control and Optimization},
61(2):560--585, 2023.
\newblock \url{https://doi.org/10.1137/21M1461848}.




\bibitem{Gerard1991}
Patrick G\'{e}rard.
\newblock Microlocal defect measures.
\newblock \emph{Communications in Partial Differential Equations}, 16(11):1761--1794, 1991.

\bibitem{GermainIbrahimMasmoudi2014}
Pierre Germain, Slim Ibrahim, and Nader Masmoudi.
\newblock Well-posedness of the Navier--Stokes--Maxwell equations.
\newblock \emph{Proceedings of the Royal Society of Edinburgh Section A: Mathematics}, 144(1):71--86, 2014.




\bibitem{HouamedIbrahimSaidHouari2026}
Haroune Houamed, Slim Ibrahim, and Belkacem Said-Houari.
\newblock Optimal time-decay of global solutions to the
Navier--Stokes--Maxwell system.
\newblock arXiv preprint arXiv:2609.15095, 2026.
\newblock \url{https://doi.org/10.48550/arXiv.2609.15095}.



\bibitem{IbrahimKeraani2011}
Slim Ibrahim and Sahbi Keraani.
\newblock Global small solutions for the Navier--Stokes--Maxwell system.
\newblock \emph{SIAM Journal on Mathematical Analysis}, 43(5):2275--2295, 2011.

\bibitem{JefferisJin2015}
Leland Jefferis and Shi Jin.
\newblock A Gaussian beam method for high frequency solution of symmetric hyperbolic systems with polarized waves.
\newblock \emph{Multiscale Modeling \& Simulation}, 13(3):733--765, 2015.

\bibitem{LiuPryporov2017}
Hailiang Liu and Maksym Pryporov.
\newblock Error estimates for Gaussian beam methods applied to symmetric strictly hyperbolic systems.
\newblock \emph{Wave Motion}, 73:57--75, 2017.

\bibitem{Masmoudi2010}
Nader Masmoudi.
\newblock Global well posedness for the Maxwell--Navier--Stokes system in 2D.
\newblock \emph{Journal de Math\'{e}matiques Pures et Appliqu\'{e}es}, 93(6):559--571, 2010.

\bibitem{Moser1971}
J\"urgen Moser.
\newblock A sharp form of an inequality by N. Trudinger.
\newblock \emph{Indiana University Mathematics Journal}, 20:1077--1092, 1971.

\bibitem{Pazy1983}
Amnon Pazy.
\newblock \emph{Semigroups of Linear Operators and Applications to Partial Differential Equations}, volume 44 of Applied Mathematical Sciences.
\newblock Springer, New York, 1983.

\bibitem{Ralston1982}
James~V. Ralston.
\newblock Gaussian beams and the propagation of singularities.
\newblock In \emph{Studies in Partial Differential Equations}, volume 23 of MAA Studies in Mathematics, pages 206--248. Mathematical Association of America, Washington, DC, 1982.

\bibitem{Rauch2012}
Jeffrey Rauch.
\newblock \emph{Hyperbolic Partial Differential Equations and Geometric Optics}, volume 133 of Graduate Studies in Mathematics.
\newblock American Mathematical Society, Providence, RI, 2012.

\bibitem{Taha2005}
Hassan Taha.
\newblock Semi classical measures and Maxwell's system.
\newblock Preprint, arXiv:math/0501127, 2005.

\bibitem{Yu1DCompanion2026}
Cheng Yu.
\newblock Weak-to-strong Maxwell--Ohm compactness and finite-energy solutions for a one-dimensional Navier--Stokes--Maxwell system.
\newblock Manuscript, 2026.

\end{thebibliography}
\end{document}